    \definecolor{darkred}{rgb}{0.5,0,0}
    \definecolor{darkgreen}{rgb}{0,0.5,0}
    \definecolor{darkblue}{rgb}{0,0,0.5}
\numberwithin{equation}{section}    
\theoremstyle{plain}
\newtheorem{Theorem}{Theorem}[section]
\newtheorem{Proposition}[Theorem]{Proposition}
\newtheorem{Corollary}[Theorem]{Corollary}
\newtheorem{Lemma}[Theorem]{Lemma}
\theoremstyle{definition}
\newtheorem{Assumption}[Theorem]{Assumption}
\newtheorem{Definition}[Theorem]{Definition}
\newtheorem{assumption}[Theorem]{Assumption}
\theoremstyle{remark}
\newtheorem{Remark}[Theorem]{Remark}
\renewcommand{\epsilon}{\varepsilon}
\renewcommand{\phi}{\varphi}
\newcommand{\one}{\mathbf{1}} 
\newcommand{\ato}[2]{\genfrac{}{}{0pt}{2}{#1}{#2}}
\providecommand{\abs}[1]{\lvert#1\rvert}
\providecommand{\bigabs}[1]{\bigl\lvert\ifthenelse{\equal{#1}{\cdot}}{{}\cdot{}}{#1}\bigr\rvert}
\providecommand{\biggabs}[1]{\biggl\lvert#1\biggr\rvert}
\DeclareMathOperator{\id}{id}
\newcommand{\1}[0]{\textbf 1}
\newcommand{\EE}{\mathbb{E}}
\newcommand{\RR}{\mathbb{R}}
\newcommand{\NN}{\mathbb{N}}
\newcommand{\ZZ}{\mathbb{Z}}
\newcommand{\PP}{\mathbb{P}}
\newcommand{\cC}{\mathcal{C}}
\newcommand{\cF}{\mathcal{F}}
\newcommand{\cP}{\mathcal{P}}
\newcommand{\cB}{\mathcal{B}}
\newcommand{\cH}{\mathcal{H}}
\newcommand{\cS}{\mathcal{S}}
\newcommand{\cA}{\mathcal{A}}
\newcommand{\cZ}{\mathcal{Z}}
\newcommand{\hm}[1]{\textbf{*}\leavevmode{\marginpar{\tiny%
$\hbox to 0mm{\hspace*{-0.5mm}$\leftarrow$\hss}%
\vcenter{\vrule depth 0.1mm height 0.1mm width \the\marginparwidth}%
\hbox to 0mm{\hss$\rightarrow$\hspace*{-0.5mm}}$\\\relax\raggedright #1}}}
\title
{A Banach space-valued ergodic theorem for amenable groups and applications}
\author
{Felix Pogorzelski\thanks{Friedrich-Schiller-Universit\"at Jena, Mathematisches Institut, 07743 Jena, Germany, felix.pogorzelski@uni-jena.de} \and Fabian Schwarzenberger\thanks{Technische Universit\"a{}t Chemnitz, Fakult\"a{}t f\"u{}r Mathematik, 09107 Chemnitz, Germany, fabian.schwarzenberger@mathematik.tu-chemnitz.de}}
\date{}
\begin{document}

\maketitle

\begin{abstract}
In this paper we study unimodular amenable groups. The first part is devoted to results on the existence of uniform families of $\varepsilon$-quasi tilings for these groups. In light of that, constructions of Ornstein and Weiss in \cite{OrnsteinW-87} are extended by quantitative estimates for the covering properties of the corresponding decompositions. Afterwards, we apply the developed methods to obtain an abstract ergodic theorem for a class of functions mapping subsets of a countable, amenable  group into some Banach space. This significantly extends and complements the previous results in \cite{Lenz-02, LenzMV-08, LenzSV-10, LenzSV-11}. Further, using the Lindenstrauss ergodic theorem (cf.\@ \cite{Lindenstrauss-01}), we describe a link of our results to classical ergodic theory. We conclude with two important applications: the uniform approximation of the integrated density of states on amenable Cayley graphs, as well as the almost-sure convergence of cluster densities in an amenable bond percolation model. 
\end{abstract}

\section{Introduction}
In this paper, we study geometric and spectral approximation results for amenable groups. We proceed in the following three major steps. 
\begin{itemize}
\item  We extend the celebrated $\varepsilon$-quasi results for amenable groups of {\sc Ornstein} and {\sc Weiss} \cite{OrnsteinW-87}. We give precise effective estimates on the covering and uniformity properties of the tilings under consideration.
\item  Using the elaborated tiling techniques, we prove an almost-additive ergodic theorem (Theorem~\ref{thm:ET}) which is valid for all countable amenable groups. This generalizes the results in \cite{LenzMV-08, LenzSV-10} and complements \cite{Lenz-02}. The statement applies to Banach space-valued functions which are defined on the space of all finite subsets of the group.   

\item  We conclude the present paper with two major applications concerning the uniform approximation of the integrated density of states for discrete operators, as well as the existence and convergence of cluster densities in a bond percolation model. This generalizes previous results of~\cite{LenzMV-08, LenzSV-10, LenzSV-11, Weissbach-11, Grimmett-76, Grimmett-99} and extends them to the geometric situation of all amenable Cayley graphs.  
\end{itemize}



The major tools in the proof of our main Theorem~\ref{thm:ET} are given by the results concerning $\epsilon$-quasi tilings of the group, developed in the consecutive Sections 3 and 4. 
The challenge here is to approximate large compact sets in the group by unions of nearly disjoint translates of smaller F{\o}lner sets. 
In our elaborations, we make use of ideas and techniques of {\sc Ornstein} and {\sc Weiss}, elaborated in \cite{OrnsteinW-87}. 
More precisely, we extend their results to obtain effective covering estimates. 
In light of that, we present the underlying constructions in a detailed and rigorous manner and we give a complete picture of the tilings under consideration.   
These results enable us to prove the mentioned ergodic theorem in Section 5, cf.\@ Theorem \ref{thm:ET}. In this context, we consider mappings which take their values according to a coloring $\mathcal{C}$ of the group by finitely many colors ($\mathcal{C}$-invariance). Those functions can be interpreted as abstract analogues of classical ergodic averages. Concerning the general ergodic theory of convergence theorems for amenable group actions, the reader may e.g.\@ refer to~\cite{Ollagnier-85}.   

In fact, we show the Banach space convergence of these averages along F{\o}lner sequences. As an ergodicity assumption, we deal with the case where the pattern frequencies induced by $\mathcal{C}$ exist along the F{\o}lner sequence under consideration.  
Using the Lindenstrauss ergodic theorem (cf.\@ \cite{Lindenstrauss-01}), we discuss sufficient conditions for the validity of the ergodic theorem in Section 6. 
This provides a link to classical ergodic theorems. 
The Sections~7 and~8 are devoted to applications of our main result to amenable Cayley graphs: the uniform approximation of the integrated density of states for finite hopping range operators (Section~7), as well as the investigation of the cluster distribution of a bond percolation model (Section~8). 
We attach an appendix in Section~9, where we give the elementary, but technical proofs of the Lemmas~\ref{lemma:folner} and~\ref{lemma:ow4}.  
%


Having described the rough structure of the paper, let us now discuss its content in further detail. 
We also draw some connections between our work and the literature on similar topics. 
In Section~2 we introduce the most important notions for the groups under consideration. 
We outline the concepts of amenability, of the $K$-boundary and of different types of F{\o}lner sequences in the setting of unimodular groups. 
Further, some elementary properties are proven for these objects. 

Section~3 is devoted to a first discussion of $\epsilon$-quasi tilings of compact subsets of the group via $\epsilon$-disjoint subsets. 
These methods are used to prove the main decomposition theorems in Section 4. 
Note that an $\epsilon$-quasi tiling of a certain set $T$ is a family of subsets such that the elements of this family may overlap only in a small portion and such that they cover all but a small fraction of $T$, c.f. Definition \ref{defi:STP}. 
We repeat some arguments of {\sc Ornstein} and {\sc Weiss} \cite{OrnsteinW-87} to extend them by quantitative estimates giving an exact description of the shape, as well as of the degree of uniformity of the approximation (tiling) of the set $T$.   
In light of that, Theorem \ref{thm:STP} shows that each unimodular amenable group satisfies the so-called \emph{special tiling property}. 
Roughly speaking - this condition assures the existence of $\varepsilon$-quasi tilings by translates of finitely many, arbitrarily invariant compact sets, such that the translates of one specific set cover a precisely determined portion. 
In the situation of countable groups, even more can be shown. 
Considering {\em families} of coverings for finite (compact) subsets of the group under consideration, we show in Theorem~\ref{thm:USTP} that on average, each part of the set $T$ can be covered in the same way. 
Covering arguments of this kind have far-reaching applications. 
For instance, in \cite{Huczek-12}, the author uses an adapted construction to prove the existence of zero-dimensional extensions of topological dynamical systems endowed with a free, amenable, countable group action by homeomorphisms. 
Another example is the so-called {\em Equipartition Theorem} for graphs, proven by {\sc Elek} in \cite{Elek-12}. This assertion is concerned with covering properties for weakly convergent, hyperfinite graph sequences with uniformly bounded vertex degree. 



The results of Section 4 form the heart of the proof of the ergodic theorem, stated and proven in Section 5 as Theorem \ref{thm:ET}.
Given an arbitrary countable amenable group $G$, along with a coloring $\cC$ of the group and with a F\o{}lner sequence $(U_j)$ (along which the frequencies of all patterns exist) 
the ergodic theorem implies that for every almost additive and $\cC$-invariant function $F$, mapping finite subsets of the group into some Banach space we have that
\[
\overline{F} := \lim_{j\to\infty}\frac{F(U_j)}{|U_j|}
\]
exists in the topology of the Banach space. 
Furthermore, we can express this limit using a semi-explicit formula, containing the frequencies of the patterns, as well as the densities of different tiles in an $\varepsilon$-quasi tiling, c.f.\@ (ii) of Theorem \ref{thm:ET}. 
Assertions of this kind have been treated before. In \cite{Lenz-02}, the author characterizes unique ergodicity of a minimal subshift over some finite alphabet by the validity of a Banach space valued convergence theorem. 
The works \cite{LenzS-06, LenzMV-08, LenzSV-10} consider the normalized convergence of Banach space valued, almost-additive functions along F{\o}lner sequences in certain countable, amenable groups.  
The hitherto most general result for groups can be found in the latter paper, where the authors are able to deal with a specific class of amenable groups. 
More precisely, it is assumed that the group contains a F{\o}lner sequence consisting of monotiles for the group and the associated grids must be symmetric (cf.\@ \cite{LenzSV-11}). 
Although the validity of these conditions is satisfied for all {\em residually finite}, countable, amenable groups, it is not clear at all how big the gap to the 
general case is, see also \cite{OrnsteinW-87}. 
Using the sophisticated machinery of $\varepsilon$-quasi tiles, there is no need to impose any restriction on the group, i.e.\@ we establish the approximation result of Theorem \ref{thm:ET} for 
arbitrary countable, amenable groups. 
Moreover, recent results generalize the context of finitely generated, uncolored groups to bounded vertex degree graphs. 
In \cite{Pogorzelski-13}, the author proves a convergence theorem for periodic (uncolored), almost-additive functions along weakly convergent, hyperfinite graph sequences. 
The Equipartition Theorem in \cite{Elek-12} is an essential ingredient of the proof. Those latter results are complementary to the present paper. 
Dealing with amenable Cayley graphs, we do not have to stay within the context of finitely generated or periodic structures. In fact, we prove a convergence result for {\em colored} almost-additive functions which holds true for {\em all} countable, amenable groups. Using the new $\varepsilon$-quasi tiles, we are also able to derive an abstract error estimate for the convergence which depends on the speed of convergence of the occurrence frequencies of the colored patterns, cf.\@ Inequality~(\ref{eqn:EST}).    

Considering the set $\RR$ of the real numbers as the Banach space, it is worth raising the issue of a similar convergence result for subadditive set functions $f$. In \cite{LindenstraussW-00}, the authors use an assertion of that kind in order to compute the topological mean dimension for dynamical systems. 
Exploiting the $\varepsilon$-quasi tilings proven in \cite{OrnsteinW-87}, it is shown in \cite{Krieger-07,Krieger-10} that the limit $\lim_{j \rightarrow \infty}|U_j|^{-1}f(U_j)$ exists for F{\o}lner nets $(U_j)$. A semigroup result including the investigation of topological entropy can be found in \cite{CeccheriniKC-13}. 
However, the authors need to assume periodicity of $f$ (coloring of the group with just one color), a condition which is too restrictive for the purposes we have in mind. 

The existence of frequencies of patterns along a F\o{}lner sequence is an assumption in the presented ergodic theorem. 
Section 6 provides sufficient conditions for the existence of frequencies in a randomly colored graph. 
Here we draw also connections to the pointwise ergodic theorem by {\sc Lindenstrauss} \cite{Lindenstrauss-01}.

With Theorem~\ref{thm:ET} at hand, one can study various approximation results for discrete models in a wide range of geometric situations. 
In Section 7, we prove the uniform approximation via finite volume analogues of the integrated density of states (IDS) for discrete space operators on the group. The idea to use a Banach space valued ergodic theorem for this purpose has been developed in \cite{LenzS-06} in the context of Delone dynamical systems. 
In the present paper, we generalize the corresponding assertions in \cite{Elek-06pre, LenzMV-08, LenzSV-10, LenzSV-11}. Further, our elaborations are complementary to the results in the more restrictive situation of {\em periodic} operators on graph sequences, cf.\@ \cite{Elek-08, Pogorzelski-13}.   
 
We conclude our paper in Section 8 with a bond percolation model for Cayley graphs. 
Here our main Theorem~\ref{thm:ET} is applied to prove the existence of occurrence densities of clusters of a fixed size. 
In addition to this, we show the approximability of associated distribution functions, as well as the continuous dependence of these quantities on the percolation parameter. 
As the distribution of cluster sizes in a percolation process is an intensively studied field, our results are complementary to other works.
For instance, we generalize results of {\sc Grimmett} in \cite{Grimmett-76, Grimmett-99}, where the author proved continuous dependence of certain percolation quantities on the percolation parameter $p$ in the setting of $\ZZ^d$.
Besides this, in \cite{AizenmanDS-80,BandyopadhyayST-10}, the authors are interested in quantitative estimates concerning certain occurrence probabilities of clusters in the lattice case and for amenable groups, respectively.   
In particular, the asymptotic behavior for large cluster sizes has been studied before in the literature, e.g.~in connection with the so-called ``sharpness of the phase transition'' \cite{AizenmanN-84,AizenmanB-87,Menshikov-86,MenshikovMS-86,AntunovicV-08}.

\section{Preliminaries}

Throughout this paper, we assume $G$ to be a unimodular, second countable, amenable Hausdorff group. The following section is devoted to the presentation of general properties of amenable groups. More precisely, we introduce a notion for a relative boundary of subsets in $G$ and we use this concept to define so-called weak and strong F{\o}lner sequences. The existence of weak F{\o}lner sequences is commonly referred to as a characterization of amenability of second countable Hausdorff groups. As shown below, each strong F{\o}lner sequence is also a weak F{\o}lner sequence and the groups under consideration always possess a strong F{\o}lner sequence (see Lemma \ref{lemma:folner}).

Let $\cB(G)$ be the $\sigma$-algebra generated by the open sets of $G$. For a set $A\in \cB(G)$ we denote by $|A|$ the Haar measure of $A$ (for countable groups $|A|$ is the counting measure of $A$). 
The set of all finite subsets of $G$ is called $\cF(G)$. Independently of the Haar measure on $G$ we write $\sharp(A)$ for the cardinality of a set $A\in\cF(G)$. 
The unit element of the group is denoted by $\id$.
\begin{Definition}\label{defi:AMENABLE}
 An unimodular group $G$ is called \emph{amenable} if for each $\epsilon>0$ and $K\subseteq G$ compact there is a compact set $F\subseteq G$ and $K_0\subseteq K$ such that
\begin{enumerate}[(i)]
 \item  $|F\setminus kF| < \epsilon |F|$ for all $k\in K_0$
 \item  $|K\setminus K_0|<\epsilon$
\end{enumerate}
\end{Definition}

Let us proceed with definition of boundary terms of a group.

\begin{Definition} \label{defi:KBD}
Let $\emptyset \neq K, T \subseteq G$ be compact subsets in $G$. We call the set $\partial_K(T)$, defined by
\begin{eqnarray*}
\partial_K(T):= \{g \in G\,|\, Kg \cap T \neq \emptyset \,\wedge\, Kg \cap (G\setminus T) \neq \emptyset\}
\end{eqnarray*}
the \emph{$K$-boundary} of the set $T$.
Furthermore, a subset $T$ with $|T|>0$ is called \emph{$(K,\delta)$-invariant} if
\begin{eqnarray*}
\frac{|\partial_{K}(T)|}{|T|}   < \delta.
\end{eqnarray*}
\end{Definition}

\begin{Remark} \label{remark:fingen}
 In section 7 and 10 we will consider the case where $G$ is a finitely generated group, i.e. there exists a finite and symmetric generating system $S\subseteq G$. In this situation it is convenient to define the so called word metric $d_S:G\times G\to \NN_0$ on $G$. Here one sets the distance of two distinct elements $x,y\in G$ to be smallest number of elements in $S$ one needs to carry $x$ into $y$, i.e.
\[
 d_S(x,y):=\min \{ n\in \NN\mid \exists s_1,\dots,s_n\in S \text{ with } s_1\cdots s_n =xy^{-1} \} \quad\text{and}\quad d_S(x,x):=0.
\]
Using this distance one can define balls and boundaries as follows: the ball of radius $r\in \NN$ around the element $x\in G$ is $B_r(x):=\{y\in G\mid d_S(x,y)\leq r\}$ and $B_r:=B_r(\id)$. The $r$-boundary of a set $\Lambda\in \cF(G)$ is 
\[
 \partial^r(\Lambda):=\{x\in\Lambda\mid d_S(x,G\setminus\Lambda)\leq r\}\cup \{x\in G\setminus\Lambda \mid d_S(x,\Lambda)\leq r\}.
\]
In this situation it is easy to show that $\partial_{B_r}(\Lambda)=\partial^r(\Lambda)$ holds for all $\Lambda\in\cF(G)$ and $r\in\NN$.
\end{Remark}

We will see below that the $K$-boundary Definition \ref{defi:KBD} can be used for an appropriate notion for a F{\o}lner condition for sets in amenable groups. In addition to that, it has very convenient properties which are easy to deal with. In the following lemma, we provide a short list of those properties which are important for our purposes.

\begin{Lemma} \label{prop:prop}
Let $T,S,K \subseteq G$ be non-empty and compact and assume that $g \in G$. Then the following is true.
\begin{enumerate}[(i)]
\item $\partial_K(T) = \partial_K(G\setminus T)$.
\item $\partial_K(S \cup T) \subseteq \partial_K(S) \cup \partial_K(T)$.
\item $\partial_K(S \setminus T) \subseteq \partial_K(S) \cup \partial_K(T)$.
\item $|\partial_K(S \setminus T)| \leq |\partial_K(T)| + |\partial_K(S)|$
\item $\partial_K(T) \subseteq \partial_L(T)$ if $K \subseteq L \subseteq G$.
\item $\partial_K(Tg) = \partial_K(T)g$.
\item $\partial_K(TS) \subseteq \partial_K(T)S$.
\item $\partial_K(T \setminus S) \subseteq \partial_K(T) + (\partial_K(S) \cap T)$ if $\operatorname{id} \in K$.
\end{enumerate}
\end{Lemma}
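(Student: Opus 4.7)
The strategy is to unpack the definition $g\in\partial_K(T) \iff Kg\cap T\neq\emptyset \wedge Kg\cap(G\setminus T)\neq\emptyset$ and to chain the easier items to avoid repetition. Item (i) is immediate from the symmetry of the defining condition in $T$ and $G\setminus T$. For (ii), if $g\in\partial_K(S\cup T)$ then $Kg$ meets at least one of $S$ or $T$, while simultaneously meeting $G\setminus(S\cup T)=(G\setminus S)\cap(G\setminus T)\subseteq (G\setminus S),(G\setminus T)$; a case split gives the inclusion. Part (iii) I would deduce from (i) and (ii) via the identity $S\setminus T=G\setminus((G\setminus S)\cup T)$, and (iv) then follows from subadditivity of Haar measure applied to (iii). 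Item (v) is immediate since $Kg\subseteq Lg$ forces the two non-empty intersections to be inherited. For (vi), I use $G\setminus Tg=(G\setminus T)g$ and the equivalence $Kh\cap Tg\neq\emptyset \iff K(hg^{-1})\cap T\neq\emptyset$, so that $h\in\partial_K(Tg)$ iff $hg^{-1}\in\partial_K(T)$.

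The main obstacle is (vii), which requires promoting a boundary element of $TS$ to one of $T$ while keeping track of a fixed $S$-component. Given $g\in\partial_K(TS)$, pick a first witness $k_1\in K$ with $k_1g\in TS$, factor $k_1g=t_1s_1$ with $t_1\in T,\, s_1\in S$, and set $h:=k_1^{-1}t_1$, so that $g=hs_1$ and $k_1h=t_1\in T$, giving $Kh\cap T\neq\emptyset$. A second witness $k_2\in K$ with $k_2g\notin TS$ then forces $k_2h\notin T$, since otherwise $k_2h\cdot s_1 = k_2g$ would belong to $TS$. Hence $k_2h\in G\setminus T$, so $h\in\partial_K(T)$ and $g=hs_1\in\partial_K(T)S$. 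The delicate point is that both witnesses have to be routed through the \emph{same} factorization, which is why the reduction uses only the $S$-component coming from the first witness.

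Finally, for (viii) I read the $+$ as set-theoretic union, consistently with the other items. By (iii), every $g\in\partial_K(T\setminus S)$ lies in $\partial_K(T)\cup\partial_K(S)$, so only the case $g\in\partial_K(S)\setminus\partial_K(T)$ remains, and for it I must show $g\in T$. From $g\in\partial_K(T\setminus S)$ we have $Kg\cap(T\setminus S)\neq\emptyset$, hence $Kg\cap T\neq\emptyset$; combined with $g\notin\partial_K(T)$ this forces $Kg\cap(G\setminus T)=\emptyset$, i.e.\ $Kg\subseteq T$. Since $\operatorname{id}\in K$ we conclude $g\in Kg\subseteq T$, and therefore $g\in\partial_K(S)\cap T$, as required.
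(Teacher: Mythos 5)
Your proposal is correct and follows essentially the same approach as the paper: items (i)--(vi) are routine from the definition, and for (vii) you fix the factor $s_1\in S$ from the first witness and show the second witness is routed through the same $s_1$, exactly as the paper fixes a single $c\in S$ with $Kg\cap Tc\neq\emptyset$ and then verifies $gc^{-1}\in\partial_K(T)$. The only cosmetic difference is in (viii), where you invoke part (iii) to conclude $g\in\partial_K(S)$ whereas the paper derives $Kg\cap(G\setminus S)\neq\emptyset$ and $Kg\cap S\neq\emptyset$ directly, but both yield the same conclusion $g\in\partial_K(S)\cap T$.
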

\begin{proof}
The statements (i) to (vi) follow easily from Definition \ref{defi:KBD}.
To prove (vii) we fix $g \in \partial_K(TS)$. Thus $Kg \cap TS \neq \emptyset$ which implies that there is some $c \in S$ with $Kg \cap Tc \neq \emptyset$.
Since $Kg \cap (G\setminus TS) \neq \emptyset$, it then follows that $Kg \cap (G \setminus Tc) \neq \emptyset$.
We conclude that for every $g \in \partial_D(TS)$ we can find some $c \in S$ such that $gc^{-1} \in \partial_K(T)$, i.e. $g \in \partial_K(T)c \subseteq \partial_K(T)S$.\\
We finally prove the assertion (viii). So assume that $g \in \partial_K(T\setminus S)$, but $g \notin \partial_K(T)$. Then the fact that $Kg \cap (T\setminus S) \neq \emptyset$ leads to $Kg \cap T \neq \emptyset$, as well as to $Kg \cap (G \setminus S) \neq \emptyset$. Thus, if $g \notin \partial_K(T)$, this is only possible if $Kg \subseteq T$. Since $\operatorname{id} \in K$, it follows that $g \in T$. It remains to show that $Kg \cap S \neq \emptyset$, because then $g \in \partial_K(S)$. Indeed, since $g \in \partial_K(T\setminus S)$, we have $Kg \cap (G\setminus(T\setminus S)) \neq \emptyset$ and from $Kg \subseteq T$, it follows that $Kg \cap (S \cap T) \neq \emptyset$. This shows the claim. 
\end{proof}

\begin{Definition} \label{defi:FS}
Let $(F_n)$ be a sequence of non-empty compact subsets of a unimodular group $G$. If
\begin{align*}
\lim_{n \rightarrow \infty} \frac{|F_n\triangle K F_n|}{|F_n|} = 0
\end{align*}
for all non-empty, compact $K \subseteq G$, then $(F_n)$ is called {\em weak F{\o}lner sequence}. If
\begin{align*}
\lim_{n \rightarrow \infty} \frac{|\partial_K( F_n)|}{|F_n|} = 0
\end{align*}
for all non-empty, compact $K \subseteq G$, then $(F_n)$ is called {\em strong F{\o}lner sequence}.
We say that a (weak or strong) F{\o}lner sequence $(F_n)$ is {\em nested} if $\id\in F_1$ and $F_n\subseteq F_{n+1}$ for all $n \geq 1$. Furthermore a (weak or strong) F{\o}lner sequence $(F_n)$ is called \emph{tempered} if there exists $c>0$ such that
\[
 \left|\bigcup_{k<n}F_k^{-1}F_n\right|\leq c |F_n|\quad \quad (n\in \NN).
\]
\end{Definition}

In \cite{OrnsteinW-87} the authors proved that in each amenable group the following holds: given a compact set $K\subseteq G$ and $\delta>0$ there is a compact set $F$ which is $(K,\delta)$-invariant. It follows from this that given an amenable, unimodular and second countable group, one can always find strong F\o lner sequences. Furthermore, each strong F\o lner sequence is a weak F\o lner sequence. We collect these observations in the following Lemma. A proof can be found in the appendix, cf.\@ Section~\ref{sec:appendix}. 

\begin{Lemma}\label{lemma:folner}
Let $G$ be unimodular. The following statements hold:
\begin{enumerate}[(a)]
 \item If $G$ is amenable and second countable, then there exists a strong F\o lner sequence in $G$.
 \item Each strong F\o lner sequence is a weak F\o lner sequence.
 \item If there exists a weak F\o lner sequence in $G$, then $G$ is amenable.
 \item If $G$ is countable, then each weak F\o lner sequence is also a strong F\o lner sequence.
 \item If there exists a strong F\o lner sequence in $G$, then there exists also a nested strong F\o lner sequence in $G$.
 \item Each (strong or weak) F\o lner sequence has a tempered subsequence.
\end{enumerate}
\end{Lemma}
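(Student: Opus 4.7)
The six parts split naturally: (a) and (e) extract a (nested) strong F\o{}lner sequence from the Ornstein--Weiss invariance result quoted in the text, (b), (c) and (d) are algebraic equivalences between the two F\o{}lner conditions, and (f) is the classical tempering trick. For (a), I would use second countability to fix a compact exhaustion $(K_n)$ of $G$ with $\bigcup_n K_n = G$ and pick, via the quoted Ornstein--Weiss lemma, a compact $F_n$ that is $(K_n, 1/n)$-invariant. For any compact $K$ one has $K \subseteq K_n$ eventually, whence Lemma~\ref{prop:prop}(v) will give $|\partial_K F_n|/|F_n| \le |\partial_{K_n}F_n|/|F_n| < 1/n \to 0$.

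For (b), the plan is to establish the single inclusion
\[
F_n \triangle K F_n \;\subseteq\; \partial_{K \cup K^{-1}\cup\{\id\}}(F_n),
\]
to be verified by case analysis: if $x \in F_n \setminus KF_n$ then $x \in F_n$ while $K^{-1}x \subseteq G \setminus F_n$, and if $x \in KF_n \setminus F_n$ then $x \in G \setminus F_n$ while some $k^{-1}x \in F_n$. The strong F\o{}lner property applied to $K \cup K^{-1}\cup\{\id\}$ then yields the weak one. For (c), given compact $K$ and $\epsilon > 0$, I would set $\tilde K := K \cup K^{-1}\cup\{\id\}$ and choose $n$ with $|F_n \triangle \tilde K F_n|/|F_n| < \epsilon$; unimodularity gives $|F_n \setminus k F_n| = |k^{-1}F_n \setminus F_n| \le |\tilde K F_n \setminus F_n|$ for every $k \in K$, so Definition~\ref{defi:AMENABLE} will hold with $F := F_n$ and $K_0 := K$.

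For (d), I would enlarge $K$ to contain $\id$ (harmless by Lemma~\ref{prop:prop}(v)) and decompose
\[
\partial_K(T) \;=\; \bigl(T \cap K^{-1}(G\setminus T)\bigr) \,\cup\, \bigl((G\setminus T) \cap K^{-1}T\bigr).
\]
The second piece is just $K^{-1}T \setminus T$, of cardinality bounded by $|K^{-1}T \triangle T|$; for the first, the unimodular bijection $g \mapsto kg$ identifies $T \cap k^{-1}(G\setminus T)$ with $kT \setminus T$, so summing over $k \in K$ yields $|T \cap K^{-1}(G\setminus T)| \le \sharp(K)\,|KT \triangle T|$, finite because $G$ is countable. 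Both terms, divided by $|F_n|$, vanish along a weak F\o{}lner sequence. For (e), I would construct $\tilde F_n$ recursively: set $\tilde F_1 := F_{m_1} \cup \{\id\}$, and given $\tilde F_{n-1}$ select $m_n$ so large that $|\partial_{K_n}F_{m_n}|/|F_{m_n}| < 1/(2n)$ and $|F_{m_n}| \ge 2n\,|K_n^{-1}\tilde F_{n-1}|$, then put $\tilde F_n := F_{m_n}\cup \tilde F_{n-1}$. Subadditivity of $\partial_K$ (Lemma~\ref{prop:prop}(ii)), combined with $|\partial_{K_n}\tilde F_{n-1}| \le |K_n^{-1}\tilde F_{n-1}|$ and $|\tilde F_n| \ge |F_{m_n}|$, will give $|\partial_{K_n}\tilde F_n|/|\tilde F_n| \to 0$, extended to arbitrary compact $K$ by monotonicity.

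For (f), I would run Lindenstrauss's extraction: given a F\o{}lner sequence $(F_n)$, recursively pick $n_j$ so that the weak F\o{}lner property applied to the fixed compact $(\bigcup_{k<j}F_{n_k})^{-1}$ forces $|\bigcup_{k<j}F_{n_k}^{-1}F_{n_j}| \le 2|F_{n_j}|$, producing tempering constant $c = 2$. The main obstacle will be the algebraic bookkeeping in (b)--(d): one must enlarge $K$ by $K^{-1}$ and $\{\id\}$ at the right moment and exploit unimodularity to swap left- and right-sided defects. Part (d) is the one spot where countability is genuinely used---in the finite sum over $k \in K$---and this, together with the uniformity in $k \in K$ required for (c), is where a careless estimate would most easily fail.
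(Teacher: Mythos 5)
Parts (a)--(d) and (f) of your proposal match the paper's proof in substance, with only cosmetic variations: for (c) you enlarge $K$ to $K\cup K^{-1}\cup\{\id\}$ before applying weak F\o lner-ness, whereas the paper applies it directly to $K^{-1}$; for (d) you use the decomposition $\partial_K(T)=\bigl(T\cap K^{-1}(G\setminus T)\bigr)\cup\bigl((G\setminus T)\cap K^{-1}T\bigr)$ together with the translation bijection $g\mapsto kg$, whereas the paper proves the single inclusion $\partial_{L_K}(T)\subseteq L_K(T\triangle L_K T)$ with $L_K:=K\cup K^{-1}\cup\{\id\}$; for (f) you reproduce Lindenstrauss's extraction where the paper simply cites it (your sketch is correct once one observes $|KF_{n_j}|\le|F_{n_j}|+|F_{n_j}\triangle KF_{n_j}|$). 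All of these are fine.

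Part (e), however, has a genuine gap. Your construction $\tilde F_n:=F_{m_n}\cup\tilde F_{n-1}$ needs an increasing sequence of compact sets $(K_n)$ exhausting $G$ in order to verify the strong F\o lner property for an \emph{arbitrary} compact $K$: you argue ``extended to arbitrary compact $K$ by monotonicity,'' which requires $K\subseteq K_n$ for some $n$. That works under the paper's global second-countability assumption (which makes $G$ $\sigma$-compact), but statement (e) is posed only for unimodular $G$ admitting a strong F\o lner sequence; no compact exhaustion is available there. The paper sidesteps this entirely: it never changes the ``shape'' of the sets, but shifts them, defining $T_{k+1}:=F_ng^{-1}$ for a suitably invariant $F_n$ and $g$ chosen in $S:=\{g\in F_n\mid T_kg\subseteq F_n\}$ (nonempty because $F_n$ is $(T_k,1)$-invariant). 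Since translation preserves both $|\cdot|$ and $|\partial_K(\cdot)|$ by unimodularity and Lemma~\ref{prop:prop}(vi), $(T_n)$ inherits the strong F\o lner property of $(F_n)$ for \emph{every} compact $K$ at once, and $T_k\subseteq T_{k+1}$ with $\id\in T_1$ gives nestedness for free. Your union-based argument is a reasonable alternative when $G$ is $\sigma$-compact, but to prove (e) as stated you should switch to the translation idea, or explicitly restrict to the $\sigma$-compact case.
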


\begin{proof}
See appendix, Section~\ref{sec:appendix}. 
\end{proof}

\begin{Remark}
 Lemma \ref{lemma:folner} shows that in an unimodular, second countable group amenability is equivalent to both the existence of a weak F\o lner sequence, as well as to the existence of a strong F\o lner sequence. For further discussions in that direction see for example \cite{Greenleaf-69, OrnsteinW-87} and the seminal papers \cite{Folner-55, Adachi-93}.

 In the case of countable amenable groups we will only speak about F\o lner sequences as the specifications weak and strong are dispensable then.
\end{Remark}

In the following, for a number $b \in \RR$, we will write $\lceil b\rceil$ for the smallest integer greater than or equal to $b$, 
i.e. $\lceil b\rceil := \inf\{ m \in \ZZ \,|\, m \geq b \}$. Furthermore, we use the following notion. Given a number $0<\epsilon<1$, the number $N(\epsilon)$ is given by
\begin{equation}\label{eq:def:N}
 N(\epsilon):=  \lceil \log(\varepsilon)/\log(1-\varepsilon)\rceil.
\end{equation}

For later considerations, we will need the following technical lemma.
\begin{Lemma} \label{lemma:ele4}
Let $0 < \varepsilon < 1$ and assume that $(\alpha_i)_{i \in \NN}$ is a complex-valued null sequence. Then,
\begin{eqnarray*}
\lim_{\varepsilon\rightarrow 0} \sum_{i=1}^{N(\varepsilon)} \varepsilon (1-\varepsilon)^{N(\varepsilon) - i} \cdot \alpha_{i} = 0.
\end{eqnarray*}
\end{Lemma}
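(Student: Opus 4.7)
The plan is to show that the geometric weights $w_i := \varepsilon(1-\varepsilon)^{N(\varepsilon)-i}$ act like a probability-type averaging, with mass concentrated at large indices $i$. Since $(\alpha_i)$ is a null sequence and $N(\varepsilon) \to \infty$ as $\varepsilon \to 0$, the convolution will vanish in the limit.

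First, I would record two basic facts about the weights. By a geometric sum identity,
\[
\sum_{i=1}^{N(\varepsilon)} \varepsilon(1-\varepsilon)^{N(\varepsilon)-i} \;=\; 1 - (1-\varepsilon)^{N(\varepsilon)},
\]
so in particular the total mass is bounded by $1$. Moreover, by the very definition of $N(\varepsilon)$ in \eqref{eq:def:N}, we have $N(\varepsilon)\log(1-\varepsilon) \leq \log\varepsilon$, hence $(1-\varepsilon)^{N(\varepsilon)} \leq \varepsilon$. In particular $N(\varepsilon)\to\infty$ as $\varepsilon\to 0$.

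Now let $\delta>0$. Since $(\alpha_i)$ is a null sequence, it is bounded, say $|\alpha_i|\leq M$ for all $i$, and there exists $I_0\in\NN$ with $|\alpha_i|<\delta$ whenever $i>I_0$. For $\varepsilon$ so small that $N(\varepsilon)>I_0$, split the sum as
\[
\sum_{i=1}^{N(\varepsilon)} \varepsilon (1-\varepsilon)^{N(\varepsilon)-i}\alpha_i
\;=\; \underbrace{\sum_{i=1}^{I_0} \varepsilon (1-\varepsilon)^{N(\varepsilon)-i}\alpha_i}_{=:S_1(\varepsilon)}
\;+\; \underbrace{\sum_{i=I_0+1}^{N(\varepsilon)} \varepsilon (1-\varepsilon)^{N(\varepsilon)-i}\alpha_i}_{=:S_2(\varepsilon)}.
\]
For the tail $S_2$, I bound the $\alpha_i$'s by $\delta$ and use the total-mass estimate to obtain $|S_2(\varepsilon)|\leq \delta\bigl(1-(1-\varepsilon)^{N(\varepsilon)}\bigr) \leq \delta$. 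For the head $S_1$, I compute the partial geometric sum
\[
\sum_{i=1}^{I_0}\varepsilon(1-\varepsilon)^{N(\varepsilon)-i}
\;=\; (1-\varepsilon)^{N(\varepsilon)-I_0}\bigl(1-(1-\varepsilon)^{I_0}\bigr)
\;\leq\; (1-\varepsilon)^{N(\varepsilon)}(1-\varepsilon)^{-I_0}
\;\leq\; \varepsilon\,(1-\varepsilon)^{-I_0},
\]
using $(1-\varepsilon)^{N(\varepsilon)}\leq \varepsilon$. Hence $|S_1(\varepsilon)|\leq M\varepsilon(1-\varepsilon)^{-I_0}$, which tends to $0$ as $\varepsilon\to 0$ since $I_0$ is fixed.

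Combining, $\limsup_{\varepsilon\to 0}\bigl|\sum_i \varepsilon(1-\varepsilon)^{N(\varepsilon)-i}\alpha_i\bigr|\leq \delta$, and as $\delta>0$ was arbitrary, the claimed limit is $0$. The argument is essentially a discrete analogue of Abel summation for geometric weights; the only slightly delicate point is ensuring that the head contribution (carrying the potentially large early values of $\alpha_i$) is killed by the factor $(1-\varepsilon)^{N(\varepsilon)-I_0}$, which is guaranteed precisely by the choice of $N(\varepsilon)$ forcing $(1-\varepsilon)^{N(\varepsilon)}\leq \varepsilon$. No deeper obstacle arises.
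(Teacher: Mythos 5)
Your proof is correct and uses the same strategy as the paper: split the sum at a fixed index $I_0$ beyond which $|\alpha_i|<\delta$, bound the tail by $\delta$ times the total geometric mass, and show the head vanishes as $\varepsilon\to 0$. The only cosmetic difference is that you bound the head via the partial geometric sum and the inequality $(1-\varepsilon)^{N(\varepsilon)}\leq\varepsilon$, whereas the paper uses the cruder term-by-term bound $\varepsilon(1-\varepsilon)^{N(\varepsilon)-i}\leq\varepsilon$ to get $|S_1|\leq C\,N_\delta\,\varepsilon$; both head estimates tend to zero for fixed $I_0$, so the arguments are essentially identical.
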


\begin{proof}
Let $\delta > 0$ be arbitrary and choose $N_\delta \in \NN$ such that $|\alpha_{i}| < \delta$ whenever $i \geq N_\delta$. As $(\alpha_{i})$ is a null-sequence, there is some uniform bound $C>0$ for the absolute values of $\alpha_i$, $i\in\NN$. We compute
\begin{align*}
\left|   \sum_{i=1}^{N(\varepsilon)} \varepsilon (1-\varepsilon)^{N(\varepsilon)-i}  \alpha_{i} \right|
&=  \left| \sum_{i=1}^{N_{\delta}}  \varepsilon (1-\varepsilon)^{N(\varepsilon)-i}  \alpha_{i} + \sum_{i=N_{\delta} + 1}^{N(\varepsilon)}  \varepsilon (1-\varepsilon)^{N(\varepsilon)-i}  \alpha_{i} \right| \\
&\leq   C \varepsilon  N_{\delta} + \delta \cdot \sum_{i=N_{\delta} + 1}^{N(\varepsilon)} \varepsilon (1-\varepsilon)^{N(\varepsilon) - i}
\leq  C\varepsilon N_{\delta} + \delta  
\end{align*}
Here the last inequality holds since $\sum_{i=1}^{N(\varepsilon)} \varepsilon (1-\varepsilon)^{N(\varepsilon) - i}=\sum_{i=0}^{N(\varepsilon)-1} \varepsilon (1-\varepsilon)^{i}\leq 1$. Since $\delta$ is arbitrary, the claim follows.
\end{proof}

\section{Ornstein/Weiss tiling lemmas} \label{sec:OW}

This section is devoted to general tiling results for unimodular groups. We expand the elaborations
of {\sc Ornstein/Weiss} in \cite{OrnsteinW-87} to give quantitative estimates for coverings of subsets of the group, cf.\@ Lemma \ref{lemma:tilingnew}. These results will be of great importance for the proofs of the tiling theorems in the next section. 


We start with the notion of $\varepsilon$-disjoint subsets.


\begin{Definition} \label{defi:epsdis}
Let $G$ be a group and assume that $T_1, T_2$ are subsets of $G$ with $|T_1|, |T_2| < \infty$. For $0 < \varepsilon < 1$, we say that the sets $T_1$ and $T_2$ are $\varepsilon$-disjoint if there are sets $S_i \subseteq T_i$, $i=1,2$ such that
\begin{enumerate}[(i)]
\item $S_1 \cap S_2 = \emptyset$,
\item $|S_i| \geq (1-\varepsilon)|T_i|$ for all $i=1,2$.
\end{enumerate}
Let $I$ be some index set. A family $\{T_i\}_{i\in I}$ of subsets of $G$ is called $\epsilon$-disjoint if any two sets of the family are $\epsilon$-disjoint.
\end{Definition}   


We now turn to the two main results of this section which will be exploited extensively in the next section. The first one states that given $\varepsilon > 0$ and given some $(KK^{-1}, \delta)$-invariant set $T \subseteq G$, we can cover a portion of $T$, which lies between $\varepsilon -\delta$ and $\epsilon+\delta$ by $\varepsilon$-disjoint translates of $K$. The lower bound on the mass proportion of $T$ covered by $K$-translates is a new feature of Ornstein/Weiss covering statements. Moreover, we assume $K$ to be $(B,\zeta^2)$-invariant for some non-empty, compact $B\subseteq G$. This fact will guarantee that there are pairwise disjoint subsets of the translates which are $(B,4\zeta)$-invariant and which still cover the same set as the translates of $K$.
Our proof is constructive and rather technical. We give the proof in the appendix, cf.\@ Section~\ref{sec:appendix}.

\begin{Lemma} \label{lemma:ow4}
Let $G$ be some unimodular group, $0 < \varepsilon,\delta < 1/2$ and $0<\zeta<\delta/2$.
Furthermore let $T,K,B\subseteq G$ be compact sets such that $T$ is $(KK^{-1},\delta)$-invariant, $K$ is $(B,\zeta^2)$-invariant and let the sets $K$ and $B$ contain the unit element.
Then we can find finitely many elements $c_j, j=1,\dots,n$ in $T$ such that
\begin{enumerate}[(i)]
\item $Kc_j \subseteq T$, $j=1,\dots,n$.
\item $\{Kc_j\}_{j=1}^n $ is $\varepsilon$-disjoint.
\item for all $j=1,\dots,n$, there is some set $K_j \subseteq K$ with $|K_j| \geq (1-\varepsilon)|K|$ such that
\begin{itemize}
\item $K_j$ is $(B, 4\zeta)$-invariant
\item $|\partial_B(K_j)|\leq |\partial_B(K)| +\zeta|K|$
\item $\bigcup_{j=1}^l Kc_j = \bigcup_{j=1}^l K_j c_j$ for all $1 \leq l \leq n$ and the latter unions consist of pairwise disjoint sets. 
\end{itemize}
\item $(\epsilon -\delta)|T| \leq \big| \bigcup_{j=1}^n Kc_j  \big| \leq (\epsilon+\delta) |T|$.
\end{enumerate} 
\end{Lemma}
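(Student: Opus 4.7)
The plan is to use a greedy iterative construction for the $c_j$'s combined with a careful choice that controls the boundary of the modified tiles $K_j$. First I would identify the set $T' := \{g \in T : Kg \subseteq T\}$ of valid anchor candidates inside $T$. The $(KK^{-1},\delta)$-invariance of $T$ easily yields $|T \setminus T'| \leq |\partial_{KK^{-1}}(T)| < \delta|T|$, hence $|T'| > (1-\delta)|T|$. I would then produce $c_1, c_2, \dotsc$ inductively: assuming $c_1, \dotsc, c_{j-1}$ have been chosen, set $U_{j-1} := \bigcup_{i<j} K c_i$ and consider the admissibility set $\mathcal{C}_j := \{ c \in T' : |Kc \cap U_{j-1}| \leq \varepsilon |K|\}$. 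As long as $\mathcal{C}_j \neq \emptyset$ and the accumulated mass $|U_{j-1}|$ has not yet crossed a target threshold of the order of $\varepsilon|T|$, pick $c_j \in \mathcal{C}_j$ according to the refinement described below. Then (i) follows from $c_j \in T'$, and (ii) is built into the definition of $\mathcal{C}_j$ via the witness sets $Kc_j \setminus U_{j-1}$.

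For property (iv), I would argue that either the process stops due to $\mathcal{C}_j = \emptyset$ or because the covered mass first exceeds the target. In the first case, every $c \in T'$ satisfies $|Kc \cap U_{j-1}| > \varepsilon |K|$, and a Fubini argument using unimodularity
\[
 \varepsilon|K||T'| \;\leq\; \int_{T'}|Kc \cap U_{j-1}|\,dc \;=\; \int_G |\{c\in T': x\in Kc\}|\, \mathbf{1}_{U_{j-1}}(x)\,dx \;\leq\; |K||U_{j-1}|
\]
gives $|U_{j-1}| \geq \varepsilon |T'| \geq (\varepsilon - \delta)|T|$. In the second case, the lower bound holds by construction, and the upper bound $|U_{j-1}| \leq (\varepsilon+\delta)|T|$ is obtained by stopping as soon as the mass first exceeds the lower target; each new tile adds at most $|K|$ to the covered mass, and the margin $\delta|T|$ absorbs this slack once one verifies $|K|/|T| \leq \delta$, which is a direct consequence of the $(KK^{-1},\delta)$-invariance of $T$ together with $\id \in K$.

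For (iii), the natural definition $K_j := K \setminus (U_{j-1} c_j^{-1})$ immediately yields $|K_j| \geq (1-\varepsilon)|K|$ from $c_j \in \mathcal{C}_j$, pairwise disjointness of the translates $K_j c_j$, and the telescoping identity $\bigcup_{i \leq l} K c_i = \bigcup_{i \leq l} K_i c_i$. The hardest step, where the bulk of the technical work sits, is enforcing the $(B, 4\zeta)$-invariance together with the boundary estimate. Applying Lemma~\ref{prop:prop}~(viii) (valid since $\id \in B$) and translation equivariance from part~(vi), one obtains
\[
 |\partial_B(K_j)| \;\leq\; |\partial_B(K)| + |\partial_B(U_{j-1}c_j^{-1}) \cap K| \;=\; |\partial_B(K)| + |\partial_B(U_{j-1}) \cap K c_j|,
\]
so the task reduces to selecting $c_j$ inside $\mathcal{C}_j$ such that $|\partial_B(U_{j-1}) \cap K c_j| \leq \zeta |K|$.

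This selection is the main obstacle and will be carried out by a mean-value / pigeonhole argument. Using unimodularity one computes $\int_G |\partial_B(U_{j-1}) \cap Kc|\,dc = |\partial_B(U_{j-1})|\,|K|$, and by Lemma~\ref{prop:prop}~(ii) and (vi) together with $|\partial_B(K)| \leq \zeta^2|K|$, the boundary $|\partial_B(U_{j-1})|$ is controlled by $(j-1)\zeta^2|K|$. Since $|\mathcal{C}_j|$ is large relative to this quantity (as a consequence of $|T'| > (1-\delta)|T|$, a Markov estimate on $c \mapsto |Kc \cap U_{j-1}|$, and the bound on $j$ coming from $|U_{j-1}| \leq (\varepsilon+\delta)|T|$), the averaged integrand is at most $\zeta|K|$ and a suitable $c_j$ exists. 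Combining the two bounds gives $|\partial_B(K_j)| \leq |\partial_B(K)| + \zeta|K|$ as required, and the invariance then follows from
\[
 \frac{|\partial_B(K_j)|}{|K_j|} \;\leq\; \frac{(\zeta^2+\zeta)|K|}{(1-\varepsilon)|K|} \;\leq\; 4\zeta,
\]
using $\varepsilon < 1/2$ and $\zeta < \delta/2 < 1/4$.
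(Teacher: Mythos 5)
Your proposal is correct and follows essentially the same route as the paper's proof: an inductive greedy selection of anchor points via Fubini/mean-value arguments, the identical definition $K_j := K\setminus(U_{j-1}c_j^{-1})$, the same use of Lemma~\ref{prop:prop}(viii) and translation invariance for the boundary estimate, and the same final arithmetic for $(B,4\zeta)$-invariance. The only organizational difference is that you first restrict to the $\varepsilon$-disjointness admissibility set $\mathcal{C}_j \subseteq T'$ and then pick $c_j$ there by averaging $|\partial_B(U_{j-1})\cap Kc|$, whereas the paper first restricts to the boundary-good set $U = \{g\in S : |Kg\cap\partial_B(A)|\le\zeta|K|\}$, shows $|U|\ge(1-2\delta)|T|$, and then invokes Lemma~\ref{lemma:ow2} on $U$ to get the $\varepsilon$-disjointness; these are dual presentations of the same pigeonhole argument, so I count them as the same method.
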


\begin{proof}
See appendix, Section~\ref{sec:appendix}. 
\end{proof}

In the following lemma we refine the result of Lemma~\ref{lemma:ow4}. We show that under additional assumptions we can ensure that the part of $T$ which is not covered by tiles, still satisfies an invariance property. 

\begin{Lemma}\label{lemma:tilingnew}
Let $G$ be some unimodular group, $0<\epsilon,\delta<1/6$, $0<\zeta<\delta/4$ and $\eta>0$. Furthermore let $T,K,L,B\subseteq G$ be compact sets with $\id\in L\subseteq K$, $\id\in B$ and $T$ be $(KK^{-1},\delta)$-invariant and $K$ be $(LL^{-1},\eta)$-invariant, as well as $(B,\zeta^2)$-invariant. Then there is a set $C\in \cF(G)$ such that $T\setminus KC$ is $(LL^{-1},2\delta+\eta)$-invariant and the properties (i) to (iv) of Lemma \ref{lemma:ow4} are satisfied: $\{Kc\}_{c\in C}$ is $\epsilon$-disjoint, $Kc\subseteq T$ for all $c\in C$ and
\begin{equation}\label{eqn:cover1}
 \epsilon-\delta\leq \frac{|K C|}{|T|}\leq \epsilon +\delta
\end{equation}
holds. Furthermore, for each $c\in C$ there is a set $K_c\subseteq K$ which is $(B,4\zeta)$-invariant and satisfies $|K_c|\geq (1-\epsilon)|K|$ and $|\partial_B(K_c)|\leq |\partial_B(K)|+\zeta |K|$. The sets $K_c c$, $c\in C$ are pairwise disjoint and $KC= \bigcup_{c\in C}K_c c$.
\end{Lemma}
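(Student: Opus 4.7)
The plan is to invoke Lemma~\ref{lemma:ow4} to obtain the elements $c_1,\ldots,c_n \in T$ (together with the subsets $K_j \subseteq K$) satisfying its conclusions (i)--(iv). I would then set $C:=\{c_1,\ldots,c_n\}$, write $K_c:=K_j$ when $c=c_j$, and observe that assertions (i), (ii), (iv) of Lemma~\ref{lemma:ow4} are exactly the $\epsilon$-disjointness of $\{Kc\}_{c\in C}$, the inclusions $Kc\subseteq T$, and the covering estimate~\eqref{eqn:cover1}; the second sentence of the statement together with the disjointness and identity $KC=\bigcup_{c\in C}K_cc$ is a verbatim reformulation of (iii). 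Hence the only new content to be verified is the $(LL^{-1},2\delta+\eta)$-invariance of the residual set $T\setminus KC$.

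For that step, since $\mathrm{id}\in L\subseteq LL^{-1}$, part~(viii) of Lemma~\ref{prop:prop} applies and gives
\[
\partial_{LL^{-1}}(T\setminus KC)\subseteq \partial_{LL^{-1}}(T)\cup \bigl(\partial_{LL^{-1}}(KC)\cap T\bigr).
\]
Because $L\subseteq K$ entails $LL^{-1}\subseteq KK^{-1}$, part~(v) of Lemma~\ref{prop:prop} together with the $(KK^{-1},\delta)$-invariance of $T$ yields $|\partial_{LL^{-1}}(T)|\leq \delta |T|$. For the boundary of the union $KC=\bigcup_{c\in C}Kc$, I would iterate part~(ii) of Lemma~\ref{prop:prop} and apply part~(vi) together with unimodularity of $G$ and the $(LL^{-1},\eta)$-invariance of $K$:
\[
|\partial_{LL^{-1}}(KC)|\leq \sum_{c\in C}|\partial_{LL^{-1}}(K)c|=n\,|\partial_{LL^{-1}}(K)|\leq n\eta |K|.
\]

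To turn $n|K|$ into a quantity comparable with $|T|$, I would use that the pairwise disjoint sets $K_cc\subseteq Kc$ have measure at least $(1-\epsilon)|K|$, so that
\[
(1-\epsilon)n|K|\leq |KC|\leq (\epsilon+\delta)|T|,
\]
the second inequality coming from Lemma~\ref{lemma:ow4}(iv). Combined with the lower bound $|T\setminus KC|\geq (1-\epsilon-\delta)|T|$, the two boundary estimates merge into
\[
\frac{|\partial_{LL^{-1}}(T\setminus KC)|}{|T\setminus KC|}\leq \frac{\delta}{1-\epsilon-\delta}+\frac{\eta(\epsilon+\delta)}{(1-\epsilon)(1-\epsilon-\delta)}.
\]

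The final step is a routine arithmetic check under the standing assumption $\epsilon,\delta<1/6$: then $1-\epsilon-\delta>2/3$ and $1-\epsilon>5/6$, so the first summand is bounded by $3\delta/2<2\delta$ and the second by $3\eta/5<\eta$, giving the required $2\delta+\eta$. I expect this numerical tightening to be the only delicate point of the argument; the rest is a recombination of the conclusions of Lemma~\ref{lemma:ow4} with the structural properties of $\partial_K(\cdot)$ collected in Lemma~\ref{prop:prop}, and is essentially bookkeeping. It is precisely this arithmetic that forces the stricter hypothesis $\epsilon,\delta<1/6$ in comparison with the range $\epsilon,\delta<1/2$ used in Lemma~\ref{lemma:ow4}.
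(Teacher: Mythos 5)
Your proof is correct and follows essentially the same route as the paper: invoke Lemma~\ref{lemma:ow4} to obtain $C$ with properties (i)--(iv), then bound $|\partial_{LL^{-1}}(T\setminus KC)|$ by splitting it into the boundary contributions of $T$ and of $KC$ and normalizing each against $|T\setminus KC|\geq(1-\epsilon-\delta)|T|$. The paper uses items (iv) and (vii) of Lemma~\ref{prop:prop} where you use (viii), (ii), (vi), and it organizes the arithmetic by showing $\sharp(C)/(|T|-|KC|)\leq 1/|K|$ rather than bounding $n|K|\leq\frac{\epsilon+\delta}{1-\epsilon}|T|$ directly, but these are interchangeable; both yield the required $2\delta+\eta$ under $\epsilon,\delta<1/6$.
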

\begin{proof}
As the assumptions of Lemma \ref{lemma:ow4} are satisfied, we get a set $C=\{c_1,\dots,c_n\}$ such that the properties (i) to (iv) therein are fulfilled. It remains to prove that $T\setminus KC$ is $(LL^{-1},2\delta+\eta)$-invariant. Therefore use the properties of the sets $K_c$, $c\in C$ and \eqref{eqn:cover1} to obtain
\begin{eqnarray} \label{eqn:cov1}
|T| \geq \frac{|KC|}{\epsilon+\delta}= \frac{\sum_{c\in C}|K_c|}{\epsilon+\delta} \geq \frac{1-\epsilon}{\epsilon + \delta}\,|K|\cdot \sharp(C).
\end{eqnarray}
In light of that, using the upper bounds on $\epsilon$ and $\delta$, we can compute
\begin{align} \label{eqn:cov1.1}
\frac{\sharp(C)}{|T|- |KC|} \leq \frac{\sharp(C)}{|T|-|K|\sharp(C)} 
\leq  \frac{\sharp(C)}{(\frac{1-\varepsilon}{\varepsilon + \delta}-1) |K|\sharp(C) }
= \frac{\varepsilon + \delta}{(1- 2\varepsilon-\delta)|K|} \leq \frac{1}{|K|}.
\end{align}
Now apply items (iv) and (vii) of Lemma \ref{prop:prop} to obtain 
\begin{align} \label{eqn:breakup1}
\frac{|\partial_{LL^{-1}}(T\setminus KC)|}{|T\setminus KC|} 
\leq \frac{|\partial_{LL^{-1}}(T)|}{|T \setminus KC|} + \frac{|\partial_{LL^{-1}}(KC)|}{|T \setminus KC|} 
\leq \frac{|\partial_{LL^{-1}}(T)|}{|T|-|KC|} + \frac{\sharp(C)\cdot |\partial_{LL^{-1}}(K)|}{|T|-|KC|}.
\end{align} 
It follows from Inequality (\ref{eqn:cover1}) that $|T| - |KC| \geq (1-(\varepsilon + \delta))|T|$. Combining this fact with Inequality (\ref{eqn:cov1.1}), we deduce from (\ref{eqn:breakup1}) that
\begin{align*}
\frac{|\partial_{LL^{-1}}(T\setminus KC)|}{|T\setminus KC|}  
\leq \frac{|\partial_{LL^{-1}}(T)|}{\left(1-(\varepsilon + \delta)\right)|T|} + \frac{|\partial_{LL^{-1}}(K)|}{|K|} \leq 2\delta+\eta,
\end{align*} 
which shows our claim. Note that here, we used that $T$ is $(LL^{-1}, \delta)$-invariant since $L \subseteq K$ and since $T$ is $(KK^{-1}, \delta)$-invariant, c.f. Lemma \ref{prop:prop}, (v).
\end{proof}

\section{Tiling theorems} \label{sec:OW2}
In this section we provide results concerning so-called $\varepsilon$-quasi-tilings of amenable groups. Continuing as before, the corresponding theorems are inspired by the work of {\sc Ornstein} and {\sc Weiss} in \cite{OrnsteinW-87}, but also significantly extended by quantitative statements for the resulting tilings of invariant compact subsets of $G$. In light of that, we summarize the essential properties of such a quasi-tiling in Definition \ref{defi:STP} and we show in Theorem \ref{thm:STP} that these tilings exist in every unimodular amenable group. Though part of these properties have been shown by {\sc Ornstein} and {\sc Weiss}, our theorem gives a far more detailed description of the quasi-tiling. More precisely, we are able to calculate exactly the densities determining the portion of covered mass by the translates of each individual tiling set $T_i$ (see item (iv) of Definition \ref{defi:STP}). In part (b) of Theorem \ref{thm:STP} we also show how one obtains a tiling with pairwise disjoint sets 
which can be made arbitrarily invariant with respect to any fixed compact subset of $G$. While Theorem \ref{thm:STP} proves the existence of one single $\varepsilon$-quasi-tiling, Theorem \ref{thm:USTP} deals with a family of $\varepsilon$-quasi-tilings and shows a uniform covering property for countable groups. Roughly speaking, the latter statement says that among the different tilings, all admissible $T_i$-translates occur as a tile with the same density/probability (see Definition \ref{defi:USTP}, property (II)).        


As introduced in Equality \eqref{eq:def:N}, we will use $N(\epsilon)=\lceil \log(\varepsilon)/\log(1-\varepsilon)\rceil$ for $0<\epsilon<1$.

\begin{Definition}[Special tiling property] \label{defi:STP}
Let $G$ be an amenable group. We say that $G$ has the {\em special tiling property} if for any given $\beta,\epsilon$ with $0 <\beta < \varepsilon \leq 1/10$ and nested F\o lner sequence $\cS=(S_n)$ there is a number $N=N(\varepsilon) \in \NN$ and there are sets
\begin{eqnarray*}
\{\id \} \subseteq T_1 \subseteq T_2 \subseteq \dots \subseteq T_{N(\epsilon)}\quad\text{with}\quad T_i\in \{S_n\mid n\geq i\} \quad (1\leq i \leq N(\epsilon))
\end{eqnarray*}
and some $\delta_0=\delta(\beta) > 0$ depending only on $\beta$ such that for all positive $\delta < \delta_0$ and every $(T_N T_N^{-1}, \delta)$-invariant set $T\in\cB(G)$, we can find so-called {\em center sets} $C_i^T\in\cF(G), i=1,\dots,N(\epsilon)$ such that
\begin{enumerate}[(i)]
\item $T_i C_i^T \subseteq T$ for all $i=1,\dots,N(\epsilon)$,
\item $\{T_i c\}_{c \in C_i^T}$ is an $\varepsilon$-disjoint family for all $i=1,\dots,N(\epsilon)$,
\item $\{T_i C_i^T\}_{i=1}^{N(\epsilon)}$ is a disjoint family of sets,
\item $\left| \frac{|T_i C_i^T|}{|T|} -  \eta_i(\epsilon) \right| < \beta$ \quad for all $i=1,\dots,N(\epsilon)$, where $\eta_i(\epsilon):=\varepsilon(1-\varepsilon)^{N(\epsilon)-i} $.
\end{enumerate}
In this case, we say that the $\{T_i\}$ \emph{$\epsilon$-quasi tile} the group $G$ and if for $T \subseteq G$, the properties (i)-(iv) hold, we say that $T$ has the {\em special tiling property (STP)} with respect to $(\{T_i\}_{i=1}^{N(\epsilon)},\cS, \varepsilon, \beta)$ and that $T$ is \emph{$\varepsilon$-quasi tiled} (with parameter $\beta$) by the basis sets $T_i$ with finite center sets $C_i^T$.
\end{Definition}

%
\begin{Definition}
We say that $S \subseteq G$ $\alpha$-covers a set $T \subseteq G$ for $0 < \alpha \leq 1$ if
\[
|S \cap T| \geq \alpha \cdot |T|. 
\] 
\end{Definition}

\begin{Remark}Let us discuss the previous definitions.
 Item (iv) of the special tiling property shows that for $0<\epsilon\leq 1/10$ the values $\eta_i(\epsilon)=\epsilon(1-\epsilon)^{N(\epsilon)-i}$ can be interpreted as the ``almost densities'' of the elements  $T_i$, $i\in \{1,\dots,N(\epsilon)\}$ in the $\varepsilon$-quasi-tiling. This is emphasized by the fact that the $\eta_i(\epsilon)$, $i\in \{1,\dots,N(\epsilon)\}$ almost sum up to one (up to an $\epsilon$). In fact we have
\begin{equation*}
 1-\epsilon \leq \sum_{i=1}^{N(\epsilon)} \eta_i(\epsilon) \leq 1.
\end{equation*}
This is clear as $N(\epsilon)=\lceil\log (\epsilon)/\log(1-\epsilon)\rceil$ and 
\begin{equation}\label{eqn:remark4}
 \sum_{i=1}^{N(\epsilon)}\epsilon (1-\epsilon)^{N(\epsilon)-i} =\epsilon \sum_{i=0}^{N(\epsilon)-1} (1-\epsilon)^i =1-(1-\epsilon)^{N(\epsilon)}\leq 1 .
\end{equation}
Furthermore 
\[
 1-(1-\epsilon)^{N(\epsilon)} \geq 1-(1-\epsilon)^{\log(\epsilon)/\log(1-\epsilon)} = 1-\epsilon
\]
holds for all $\epsilon\in(0,1)$.

 Oftentimes we are interested in the case where $\beta=2^{-N(\epsilon)}\varepsilon$. Then it follows from Definition \ref{defi:STP} that each $T\subseteq G$ with the STP is $(1-2\varepsilon)$-covered by the corresponding $T_i$-translates. To see this, note that by the property (iv) and \eqref{eqn:remark4}, we can compute
\begin{align*}
\frac{\left|\bigcup_{i=1}^{N(\epsilon)} T_iC_i^T \cap T\right|}{|T|} &= \sum_{i=1}^{N(\epsilon)} \frac{|T_iC_i^T|}{|T|} 
\geq \sum_{i=1}^{N(\epsilon)} \left(\varepsilon (1-\varepsilon)^{N(\epsilon)-i} - 2^{-N(\epsilon)}\varepsilon\right)
\geq 1-(1-\epsilon)^{N(\epsilon)}-\epsilon
\end{align*}
The definition of $N(\epsilon)$ gives $(1-\epsilon)^{N(\epsilon)}\leq \epsilon$, which proves the claim.
\end{Remark}

We now prove an extension of Theorem 6 in \cite{OrnsteinW-87}. It shows that tilings of sufficiently invariant sets $T \subseteq G$ as described in Definition \ref{defi:STP} always exist for unimodular amenable groups.

\begin{Theorem} \label{thm:STP} Let $G$ be an unimodular amenable group. Then the following holds true.
 \begin{enumerate}
  \item[(a)] The group $G$ satisfies the special tiling property.
  \item[(b)] Let $\beta,\epsilon$ and $\cS$ be given as in Definition \ref{defi:STP} and let $B\subseteq G$ be compact with $\id\in B$ and $0 < \zeta < \varepsilon$.
Then we can find an $\epsilon$-quasi-tiling with $(B, \zeta^2)$-invariant sets $T_i$ $(i=1,\dots, N(\epsilon))$ fulfilling all the properties of Definition \ref{defi:STP} such that each $(T_NT_N^{-1},\delta)$-invariant set $T$ with $0<\delta< 6^{-N}\beta/4$ can be tiled by translates $T_ic$, $(1 \leq i \leq N(\varepsilon)$, $c\in C_i^T)$, which can be made disjoint in a way such that for all $1 \leq i \leq N(\epsilon)$ and all $c \in C_i^T$, there is some set $T_i^{(c)} \subseteq T_i$ with
\begin{itemize}
\item $|T_i^{(c)}| \geq (1-\varepsilon) |T_i|$,
\item $T_i^{(c)}$ is $(B, 4\zeta)$-invariant,
\item $|\partial_B(T_i^{(c)})| \leq |\partial_B(T_i)| + \zeta\,|T_i|$,
\item $ T_iC_i^T = \bigcup_{c \in C_i^T} T_i^{(c)}c$, where the latter union consists of pairwise disjoint sets.  
\end{itemize}
\end{enumerate}
\end{Theorem}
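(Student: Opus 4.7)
The plan is to build the quasi-tiling scale by scale, from the largest set $T_{N(\epsilon)}$ down to $T_1$, via iterated applications of Lemma~\ref{lemma:tilingnew}. Write $N := N(\epsilon)$. Fix in advance a small auxiliary parameter $\eta$ depending only on $\beta$ and $N$ (of order $6^{-N}\beta$). From the nested F\o lner sequence $\cS = (S_n)$, I would choose indices $1 \leq n_1 < n_2 < \cdots < n_N$, set $T_i := S_{n_i}$, and take $n_i$ large enough that $T_i$ is $(T_{i-1}T_{i-1}^{-1}, \eta)$-invariant for all $i \geq 2$; for part (b) I would additionally require $T_i$ to be $(B, \zeta^2)$-invariant. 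The nesting $T_1 \subseteq \cdots \subseteq T_N$ and the condition $T_i \in \{S_n : n \geq i\}$ are immediate from the choice of $\cS$.

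Given a $(T_N T_N^{-1}, \delta)$-invariant set $T$, I would set $R_N := T$ and apply Lemma~\ref{lemma:tilingnew} iteratively for $i = N, N-1, \dots, 1$ with data $(K, L, B) = (T_i, T_{i-1}, B)$ on the set $R_i$, producing a center set $C_i^T$, and then define $R_{i-1} := R_i \setminus T_i C_i^T$. An induction on $i$ yields that $R_i$ is $(T_i T_i^{-1}, \delta_i)$-invariant with $\delta_N = \delta$ and $\delta_{i-1} \leq 2 \delta_i + \eta$, hence $\delta_i \leq 2^{N-i}(\delta + \eta)$ throughout. Choosing $\delta_0$ and $\eta$ of order $6^{-N}\beta$ keeps all $\delta_i$ well below $1/6$, so the hypotheses of Lemma~\ref{lemma:tilingnew} are satisfied at every step.

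Properties (i)--(iii) of Definition~\ref{defi:STP} then follow directly: (i) from $T_i C_i^T \subseteq R_i \subseteq T$; (ii) from the $\varepsilon$-disjointness output of Lemma~\ref{lemma:tilingnew}; (iii) from the observation that $T_j C_j^T \subseteq R_j \subseteq R_i \setminus T_i C_i^T$ for $j < i$, so the sets $T_i C_i^T$ are pairwise disjoint across scales. For the density property (iv) I would combine the bound
\[
\varepsilon - \delta_i \leq \frac{|T_i C_i^T|}{|R_i|} \leq \varepsilon + \delta_i
\]
with the telescoping identity $|R_{i-1}|/|R_i| = 1 - |T_i C_i^T|/|R_i|$, giving
\[
\frac{|T_i C_i^T|}{|T|} = \frac{|T_i C_i^T|}{|R_i|} \prod_{j=i+1}^{N}\frac{|R_{j-1}|}{|R_j|},
\]
and comparing term by term with $\varepsilon(1-\varepsilon)^{N-i} = \eta_i(\varepsilon)$. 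The total deviation is dominated by a constant multiple of $\sum_j \delta_j \leq 2^{N+1}(\delta + \eta)$, which is below $\beta$ by the choice of $\delta_0$ and $\eta$. Part (b) is then immediate: at each scale the sets $T_i^{(c)}$ with the listed size, invariance and boundary properties are exactly the sets $K_c$ produced by Lemma~\ref{lemma:tilingnew} with $K = T_i$, and the disjointness of $\bigcup_c T_i^{(c)} c = T_i C_i^T$ is part of that same output.

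The main obstacle is the exponential blow-up of invariance errors through the cascade: the recursion $\delta_{i-1} \leq 2\delta_i + \eta$ forces both the initial invariance $\delta$ of $T$ and the inter-scale invariance parameter $\eta$ to be exponentially small in $N(\varepsilon)$. This is precisely the origin of the bound $\delta < 6^{-N}\beta/4$ in part (b), and it has to be tracked carefully in order to close both the invariance induction (so that Lemma~\ref{lemma:tilingnew} remains applicable at every scale) and the density error estimate (so that (iv) holds within the prescribed tolerance $\beta$).
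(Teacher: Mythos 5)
Your proposal is correct and follows the same overall strategy as the paper: iterate Lemma~\ref{lemma:tilingnew} from the largest scale $T_N$ down, track invariance of the remainders $R_i$ (the paper's $D_l$) via the recursion $\delta_{l+1} = 2\delta_l + \eta$, and read off properties (i)--(iii) and the additional conclusions in (b) directly from the lemma's output. The one place you diverge is the computation for the density condition (iv): you telescope multiplicatively, writing $|T_i C_i^T|/|T|$ as the local ratio $|T_i C_i^T|/|R_i|$ times the product $\prod_{j>i}(1-|T_jC_j^T|/|R_j|)$ and bounding the product error by $\sum_j \delta_j \lesssim 2^N(\delta+\eta)$, whereas the paper runs an additive induction on the quantities $|T_{N-l}C_{N-l}^T|/|T|$ with explicit error constants $\kappa_l = 3^l$, which is what ultimately forces the $6^{-N}$ factor in the hypothesis on $\delta$. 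Your version is a bit cleaner and actually yields a tighter bound, but it proves the same intermediate estimate within the same iterative framework, so I would not call it a genuinely different route; the only small bookkeeping issue is the final step $i=1$, where $L = T_0$ is undefined and one should take $L = \{\id\}$ (or simply invoke Lemma~\ref{lemma:ow4} instead, since no further invariance of the last remainder is needed).
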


\begin{proof}
 We will only prove (b) since (a) follows as a special case. Let $\epsilon$ and $\beta$ with $0<\beta<\epsilon\leq 1/10$ and $\zeta>0$ be given and let $\cS=(S_n)$ be some nested F\o lner sequence. Choose some $0<\delta< \frac \beta 4 6^{-N(\epsilon)}$. Without loss of generality we assume that $S_n$ is $(B,\zeta^2)$-invariant for all $n\in\NN$ and that $\zeta<\delta/4$ (If $\zeta$ is not chosen to be smaller than $\delta/4$, then we can take some $\tilde{\zeta}<\delta/4$ and repeat all the steps of the proof again. Hence all claimed statements will hold for the original $\zeta$ as well). As usual, we use the notation $N:=N(\epsilon)$. 

 We start choosing the sets $T_i\in\{S_n\mid n\in \NN \}$, $i=1,\dots,N(\epsilon)$ inductively in the following way: set $T_1:=S_1$ and if $T_i=S_k$ then take $T_{i+1}\in\{S_n\mid n\geq k+1\}$ which is $(T_iT_i^{-1},\delta)$-invariant. Then obviously $\id\in T_i\subseteq T_{i+1}$ and $T_i\in\{S_n\mid n\geq i\}$ for all $i=1,\dots,N(\epsilon)-1$.

Now assume that $T$ is a $(T_NT_N^{-1}, \delta)$-invariant subset of $G$. 
We apply Lemma \ref{lemma:tilingnew} with $T=T$, $K=T_N$, $L=T_{N-1}$, $B=B$ and $\eta = \delta$ to obtain a finite set $C_N^T$ such that
$\{T_Nc\}_{c\in C_N^T}$ consists of $\epsilon$-disjoint subsets of $T$ and $D_1:=T\setminus T_NC_N^T$ is $(T_{N-1}T_{N-1}^{-1},\delta_1)$-invariant, where $\delta_1=3\delta$. Furthermore there are $(B,4\zeta)$-invariant subsets $T_N^{(c)}\subseteq T_N$ such that $\bigcup_{c\in C_N^T}T_N^{(c)} c= \bigcup_{c\in C_N^T} T_N c$ as well as
\[
 |T_N^{(c)}|\geq (1-\epsilon)|T_N|\quad\text{and}\quad |\partial_B(T_N^{(c)})| \leq |\partial_B(T_N)| + \zeta\,|T_N| \quad\text{for all }c\in C_N^T.
\]
 
Now we use Lemma \ref{lemma:tilingnew} inductively. If for some $l\in\{1,\dots,N-1\}$ the set $D_l$ is chosen as a $(T_{N-l}T_{N-l}^{-1},\delta_l)$-invariant set, we apply the Lemma with $T=D_l$, $K= T_{N-l}$, $L=T_{N-l-1}$,$B=B$, $\delta=\delta_l$, $\eta=\delta$ and $\zeta=\zeta$. Note that here it is important that $\delta_l$ is small enough, which we will ensure afterwards. This gives an appropriate set $C_{N-l}^T\in \cF(G)$ such that $D_{l+1}:=D_l\setminus T_{N-l}C_{N-l}^T$ is $(T_{N-l-1}T_{N-l-1}^{-1}, \delta_{l+1})$-invariant, where $\delta_{l+1}:=2\delta_l + \delta$. Again there are $(B,4\zeta)$-invariant sets $T_{N-1}^{(c)}\subseteq T_{N-1}$ with $\bigcup_{c\in C_{N-l}^T}T_{N-l}^{(c)} c= \bigcup_{c\in C_{N-l}^T} T_{N-l} c$ as well as
\[
|T_{N-l}^{(c)}|\geq (1-\epsilon)|T_{N-l}| 
\quad\text{and}\quad
|\partial_B(T_{N-l}^{(c)})| \leq |\partial_B(T_{N-l})| + \zeta\,|T_{N-l}|
\quad\text{for all }c\in C_{N-l}^T.
\]
This will give the additional properties listed in (b).

With $\delta_0=\delta$ we obtain $\delta_l=(2^{l+1}-1)\delta$ for all $l=1,\dots,N-1$. Therefore for arbitrary $l\in \{1,\dots,N-1\}$ we have $\delta_l\leq \delta_{N-1}=(2^N-1)\delta\leq 1/6$, which shows that all $\delta_l$ are small enough to apply Lemma \ref{lemma:tilingnew}. Furthermore the Lemma implies the inequalities
\begin{equation}\label{eqn:cover1_1}
 \epsilon- \delta_l
\leq
 \frac{|T_{N-l}C_{N-l}^T|}{|D_{l}|}
\leq
 \epsilon+ \delta_l
\end{equation}
for all $l=0,\dots,N-1$, where $D_0:=T$. We claim that for all $l=0,\dots,N-1$, there is some constant $\kappa_l$ {\em independent} of the parameters $\varepsilon, \beta$ and $\delta$ such that
\begin{eqnarray} \label{eqn:main_claim}
\left| \frac{|T_{N-l}C_{N-l}^T|}{|T|} - \varepsilon(1-\varepsilon)^{l} \right| \leq \kappa_{l} \cdot \delta_l.
\end{eqnarray}  
We proceed by induction on $l$. Note that we have treated the case $l=0$ in inequality (\ref{eqn:cover1_1}) with $\kappa_0 = 1$. Now let $l \in \NN$ and assume that \eqref{eqn:main_claim} holds for all $k = 0\dots, l-1$. By the induction hypothesis, we can sum up the resulting inequalities and arrive at
\begin{eqnarray*}
\varepsilon \cdot \sum_{k=0}^{l-1} (1-\varepsilon)^{k} - \sum_{k=0}^{l-1} \kappa_k \delta_k 
\leq \frac{|\bigcup_{k < l} T_{N-k}C_{N-k}^T|}{|T|} 
\leq \varepsilon \cdot \sum_{k=0}^{l-1} (1-\varepsilon)^{k} + \sum_{k=0}^{l-1} \kappa_k \delta_k.
\end{eqnarray*} 
Moreover, it follows from this and the definition of $D_l$ that $T\setminus D_l=\bigcup_{k<l}T_{N-k}C_{N-k}^T$ and hence
\begin{eqnarray} \label{eqn:global}
1- \varepsilon\cdot \sum_{k=0}^{l-1} (1-\varepsilon)^{k} - \sum_{k=0}^{l-1} \kappa_k  \delta_k 
\leq \frac{|D_l|}{|T|} 
\leq 1- \varepsilon\cdot \sum_{k=0}^{l-1} (1-\varepsilon)^{k} + \sum_{k=0}^{l-1} \kappa_k  \delta_k.
\end{eqnarray}
Combining this inequality (\ref{eqn:global}) with the estimate (\ref{eqn:cover1_1}), we obtain
\begin{align*}
(\varepsilon - \delta_l)\left( 1- \varepsilon \sum_{k=0}^{l-1} (1-\varepsilon)^{k} - \sum_{k=0}^{l-1} \kappa_k \delta_k \right) 
&\leq \frac{|T_{N-l}C_{N-l}^T|}{|T|} \\
&\leq  (\varepsilon + \delta_l) \left( 1- \varepsilon  \sum_{k=0}^{l-1} (1-\varepsilon)^{k} + \sum_{k=0}^{l-1} \kappa_k  \delta_k \right).
\end{align*}
It follows using $0<\epsilon<1$ that
\begin{eqnarray*}
\varepsilon\left(1-\varepsilon \, \sum_{k=0}^{l-1} (1-\varepsilon)^k \right) &-& \delta_l\left( 1 + \sum_{k=0}^{l-1} \kappa_k  \delta_k \right) - \sum_{k=0}^{l-1} \kappa_k  \delta_k \leq \frac{|T_{N-l}C_{N-l}^T|}{|T|} \\
&\leq& \varepsilon \left(1-\varepsilon \, \sum_{k=0}^{l-1} (1-\varepsilon)^k \right) + \delta_l\left( 1 + \sum_{k=0}^{l-1} \kappa_k  \delta_k \right) + \sum_{k=0}^{l-1} \kappa_k \delta_k,
\end{eqnarray*}
which implies with $\delta_k \leq \delta_l \leq 1$ and $1-\varepsilon \, \sum_{k=0}^{l-1} (1-\varepsilon)^k = (1-\epsilon)^l$ that
\begin{align*}
\left| \frac{|T_{N-l}C_{N-l}|}{|T|} - \varepsilon(1-\varepsilon)^l \right| 
\leq \delta_l\left( 1 + \sum_{k=0}^{l-1} \kappa_k  \delta_k \right) + \sum_{k=0}^{l-1} \kappa_k \delta_k 
\leq \left(1 + 2 \sum_{k=0}^{l-1} \kappa_k \right) \delta_l.
\end{align*}
This shows the claim (\ref{eqn:main_claim}) with $\kappa_l:= 1 + 2 \sum_{k=0}^{l-1} \kappa_k$. Since $\kappa_0 = 1$ we can compute the $\kappa_l$ recursively, namely $\kappa_l=3^{l}$ for all $l\geq 0$.
In particular, we have for all $1 \leq i \leq N$ that
\begin{eqnarray*}
\left| \frac{|T_{i}C_{i}|}{|T|} - \varepsilon(1-\varepsilon)^{N-i} \right| 
\leq 3^{N} (2^{N+1}-1)\delta 
\leq 2\cdot 6^{N}\delta
< \beta,
\end{eqnarray*}
by the choice of $\delta$. This proves (iv) of Definition \ref{defi:STP}. Properties (i), (ii) and (iii) follow by the construction of the sets $C_k^T$, $k=1,\dots,N$. The additional properties concerning the disjoint sets $T_i^{(c)}$ can  immediately be deduced from Lemma \ref{lemma:tilingnew}.
\end{proof}

For countable amenable groups, we now verify the existence of a {\em family} of coverings of the type as in Theorem \ref{thm:STP} which possesses a uniform covering property on average. To be precise, it will be shown that for each element $u$ of the covered set $T$, the probability for this element being a center set of a covering of the family is equal to some number which only depends on $\varepsilon$ as well as on the tiling set $T_i$. \\
To do so, we will need the concept of the so-called {\em uniform special tiling property}.

\begin{Definition} \label{defi:USTP}
Let $G$ be a countable amenable group. We say that $G$ satisfies the {\em uniform special tiling property} (USTP) if for all strong F{\o}lner sequences $(U_n)$ in $G$, the following statements hold true. 
\begin{itemize}
\item For given $0<\varepsilon \leq 1/10$, $N:=N(\varepsilon):= \lceil \log(\varepsilon)/\log(1-\varepsilon) \rceil$, for arbitrary $0 <\beta < 2^{-N}\varepsilon$, and for any nested F{\o}lner sequence $(S_n)$, the group $G$ satisfies the special tiling property according to Definition \ref{defi:STP} with tiling sets
\begin{eqnarray*}
\{\operatorname{id}\} \subseteq T_1 \subseteq T_2 \subseteq \dots \subseteq T_{N(\varepsilon)},
\end{eqnarray*}
where $T_i \in \{S_n \,|\, n \geq i\}$ for $1 \leq i \leq N(\varepsilon)$.
\item For fixed numbers $\varepsilon$ and $\beta$, there is a number $K \in \NN$ and a finite set $Q\subseteq G$ depending on $\varepsilon, \beta$ and on the basis sets $T_i$ such that for each $k \geq K$, we find a finite set $\Lambda_k \subseteq G$, along with a {\em family} $\{C_i^{\lambda}(U_k) \, |\, \lambda \in \Lambda_k,\, 1 \leq i \leq N\}$ of finite center sets for the $T_i$ such that the set $U_k$ is $\varepsilon$-quasi tiled with the properties (i)-(iii) of Definition \ref{defi:STP} and 
\begin{enumerate}[(I)]
\item $\frac{|\bigcup_{i=1}^N T_iC_i^{\lambda}(U_k)|}{|U_k|} \geq 1 - 4\varepsilon$ for all $\lambda \in \Lambda_k$. 
\item $\left| |\Lambda_k|^{-1} \, \sum_{\lambda \in \Lambda_k} \one_{C_i^{\lambda}(U_k)}(u) - \frac{\varepsilon(1-\varepsilon)^{N-i}}{|T_i|} \right| < 3 \frac{\beta}{|T_i|} + \varepsilon\cdot \gamma_i$ for all $1 \leq i \leq N$ and all $u \in U_k\setminus \partial_Q(U_k)$, where the $\gamma_i>0$ satisfy $\sum_{i=1}^N \gamma_i |T_i| \leq 2$.
\end{enumerate}
\end{itemize}
\end{Definition}

\begin{Theorem}[Uniform Decompositions] \label{thm:USTP}
Each countable, amenable group satisfies the uniform special tiling property.  
\end{Theorem}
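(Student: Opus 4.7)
I propose to derive the USTP from the STP (Theorem~\ref{thm:STP}) by an averaging argument over group translates: the family $\{C_i^\lambda(U_k)\}_{\lambda\in\Lambda_k}$ will arise from a single STP-tiling of a large auxiliary F{\o}lner set $V_k\supseteq U_k$, right-shifted by each $\lambda\in\Lambda_k$ and restricted to admissible centres inside $U_k$. Property~(II) of Definition~\ref{defi:USTP} will then follow from a double counting identity showing that each deep-interior $u\in U_k$ lies in the $\lambda$-shifted centre set precisely for the proportion of $\lambda$'s matching the density of the $D_i$'s in $V_k$.

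\emph{Setup.} Given $\varepsilon\le 1/10$, $N:=N(\varepsilon)$, $\beta<2^{-N}\varepsilon$ and a nested F{\o}lner sequence $(S_n)$, I apply Theorem~\ref{thm:STP}(a) with parameters $(\varepsilon,\beta/2)$ to obtain basis sets $\{\id\}\subseteq T_1\subseteq\cdots\subseteq T_N$ drawn from $(S_n)$ and the threshold $\delta_0>0$. I set $Q:=T_NT_N^{-1}$; since $\id\in Q$, any $u\in U_k\setminus\partial_Q(U_k)$ satisfies $Qu\subseteq U_k$ and hence $T_iu\subseteq U_k$ for every $i$. For each $k$ so large that $U_k$ is $(Q,\delta_0)$-invariant, I choose an auxiliary F{\o}lner set $V_k\supseteq U_k$ which is much more $Q$-invariant than $U_k$, and apply Theorem~\ref{thm:STP}(a) to $V_k$ with parameter $\beta/2$ to obtain $\varepsilon$-disjoint centre sets $D_1,\ldots,D_N\subseteq V_k$ satisfying (i)--(iv) of Definition~\ref{defi:STP}.

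\emph{Construction and verification.} Put
\[
\Lambda_k:=\{\lambda\in V_k:\,U_k\lambda\subseteq V_k\},
\]
the deep $U_k$-interior of $V_k$, and for $\lambda\in\Lambda_k$ define
\[
C_i^\lambda(U_k):=\{c\in U_k:\,c\lambda\in D_i\text{ and }T_ic\subseteq U_k\}.
\]
Properties (i)--(iii) of Definition~\ref{defi:STP} transfer directly from the $D_i$, because right-translation preserves $\varepsilon$-disjointness and mutual disjointness of families of $T_i$-translates. Property~(I) of the USTP follows from the $(1-2\varepsilon)$-covering remark after Definition~\ref{defi:STP} applied to $V_k$, with one further $\varepsilon$ absorbed to control the admissibility loss $|\partial_{T_N}(U_k)|/|U_k|$. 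For the key property~(II), fix $u\in U_k\setminus\partial_Q(U_k)$; since $T_iu\subseteq U_k$,
\[
\sum_{\lambda\in\Lambda_k}\one_{C_i^\lambda(U_k)}(u)=\bigl|\{\lambda\in\Lambda_k:u\lambda\in D_i\}\bigr|=|D_i\cap u\Lambda_k|.
\]
By construction $u\Lambda_k\subseteq V_k$, and since $V_k\setminus\Lambda_k\subseteq\partial_{U_k}(V_k)$ is F{\o}lner-small, $|D_i\cap u\Lambda_k|$ differs from $|D_i|$ by at most $|\partial_{U_k}(V_k)|$, which for $V_k$ chosen sufficiently invariant is smaller than $(\beta/|T_N|)\cdot|\Lambda_k|$. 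Dividing by $|\Lambda_k|$ and combining the $\varepsilon$-disjointness bound $(1-\varepsilon)|T_i|\,|D_i|\le|T_iD_i|\le|T_i|\,|D_i|$ with STP-(iv) giving $|T_iD_i|/|V_k|=\eta_i(\varepsilon)+O(\beta)$ yields
\[
\frac{|D_i|}{|\Lambda_k|}=\frac{\eta_i(\varepsilon)}{|T_i|}+\varepsilon\gamma_i+O(\beta/|T_i|),
\]
with $0\le\gamma_i\le 2\eta_i(\varepsilon)/|T_i|$, so $\sum_i\gamma_i|T_i|\le 2\sum_i\eta_i(\varepsilon)\le 2$, as required.

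\emph{Main obstacle.} The principal challenge is to balance the threefold F{\o}lner hierarchy $T_N\ll U_k\ll V_k$ so that three independent error sources remain inside the $3\beta/|T_i|+\varepsilon\gamma_i$ budget of~(II): (a) the admissibility cutoff $T_ic\subseteq U_k$, (b) the relative boundary of $\Lambda_k$ in $V_k$, and (c) the conversion from $\varepsilon$-disjoint centre density to tile density, which produces the $\varepsilon\gamma_i$ term with $\gamma_i\asymp\eta_i(\varepsilon)/|T_i|$. This bookkeeping must be performed uniformly in $i$ so that the normalisation $\sum_i\gamma_i|T_i|\le 2$ survives; the conceptual heart of the argument, captured by the identity $\sum_{\lambda}\one_{C_i^\lambda(U_k)}(u)=|D_i\cap u\Lambda_k|$, is considerably simpler.
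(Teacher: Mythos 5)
The core identity $\sum_{\lambda \in \Lambda_k} \one_{C_i^\lambda(U_k)}(u) = |D_i \cap u\Lambda_k|$ is correct, properties (i)--(iii) do transfer by right-translation, and your deduction of property~(II) from this identity plus the STP density estimate for $D_i$ is sound in outline. The gap is in property~(I). You assert that the $(1-2\varepsilon)$-covering of $V_k$ together with the admissibility loss $|\partial_{T_N}(U_k)|/|U_k|$ yields $(1-4\varepsilon)$-coverage of every translate $U_k\lambda$. But $\bigcup_i T_iD_i$ covers $(1-2\varepsilon)$ of $V_k$ only in aggregate; the uncovered set $V_k \setminus \bigcup_i T_iD_i$ has measure up to order $\varepsilon |V_k|$, and since $|V_k| \gg |U_k|$ (forced by your requirement that $|\partial_{U_k}(V_k)|/|V_k|$ be tiny), this uncovered set can in principle be far larger than $|U_k|$ and can swallow some translates $U_k\lambda$ entirely, giving coverage near $0$ for those $\lambda$. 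A Chebyshev cutoff does not rescue this: the average of $|U_k\lambda \setminus \bigcup T_iD_i|/|U_k|$ over $\lambda\in\Lambda_k$ is of order $\varepsilon$, so Markov only bounds the fraction of $\lambda$ with excess $\ge 4\varepsilon$ by a constant such as $1/2$. Discarding a constant fraction of $\Lambda_k$ is incompatible with your derivation of property~(II), which needs $|\Lambda_k|$ within a $(1+O(\beta/|T_N|))$ factor of $|V_k|$.

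This is exactly what the paper's intermediate tiling scale is for. The paper does not $\varepsilon$-quasi-tile $\hat T$ (your $V_k$) directly by the $T_i$; it first $\delta$-quasi-tiles $\hat T$ by a second basis $\overline T_1 \subseteq \cdots \subseteq \overline T_M$ with $M = N(\delta)$ and $\delta$ chosen far smaller than $\varepsilon$ (and the $\overline T_l$ far more invariant than $T_N$ but far less than $T=U_k$), and only then $\varepsilon$-quasi-tiles each block $\overline T_l'(c)c$ by the $T_i$. Because each block lies entirely inside or outside a given $Ta$ up to a boundary term controlled by $(\overline T_l\overline T_l^{-1},2^{-l}\delta)$-invariance of $T$, and each block carries a uniform $(1-2\varepsilon)$-coverage by the $T_i$, the Chebyshev variable lives at the $\delta$-scale: all but a $4\sqrt{\delta}$ fraction of translates $Ta$ see at least $(1-\sqrt{\delta}-\delta)$ of themselves filled with whole blocks, hence $(1-3\varepsilon)$-coverage by $T_i$-tiles, uniformly in $a\in\Lambda$. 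To repair your argument you would need to insert this fourth scale, i.e.\ pass from $T_N \ll U_k \ll V_k$ to $T_N \ll \overline T_M \ll U_k \ll V_k$. A secondary point: the lower bound $(1-\varepsilon)|T_i||D_i| \le |T_iD_i|$ does not follow from pairwise $\varepsilon$-disjointness alone; you should invoke the pairwise disjoint sub-tiles $T_i^{(c)}$ of Theorem~\ref{thm:STP}(b) rather than part (a), as the paper implicitly does.
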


\begin{proof}
Let $0 < \varepsilon \leq 1/10$ and $0 < \beta < 2^{-N}\varepsilon$ be fixed and let $(U_n)$ be a strong F{\o}lner sequence. Further, assume that $0 < \delta_0 < 6^{-N}\beta/16$. 

Note that by Theorem \ref{thm:STP}, $G$ is $\varepsilon$-quasi tiled by a finite sequence 
\[
\{\operatorname{id}\} \subseteq T_1 \dots \subseteq T_N
\]
of compact basis sets taken from a nested F{\o}lner sequence $(S_n)$, where as usual, $N:=N(\varepsilon):= \lceil \log(\varepsilon)/\log (1-\varepsilon)\rceil$. 

Let $0 <\delta < 1/100$. At various steps of the proof, we will have to make this parameter smaller. For the sake of the reader, we prefer doing this in a successive manner instead of imposing many technical conditions on $\delta$ right now. This is possible since the restrictions will only depend on $\varepsilon, \beta$ and the basis sets $T_i$, $1 \leq i \leq N$, but not on things developed in the proof. One can then think of starting the proof all over again with a new condition on the parameter $\delta$. 
We proceed in nine steps.

\begin{enumerate}[(1)]
\item We let $M:=\lceil \log(\delta)/\log(1-\delta) \rceil$ and following Theorem \ref{thm:STP}, part (b), we find $(T_NT_N^{-1}, \delta_0^2)$-invariant sets $\overline{T}_l \supseteq T_N$, $1 \leq l \leq M$, taken from a nested F{\o}lner sequence $(S_n)$, such that the $\overline{T}_l$ $\delta$-quasi tile the group $G$. Then we can find some integer $K \in \NN$ such that for each $k \geq K$, the set $T:=U_k$ is $(\overline{T}_l\overline{T}_l^{-1},2^{-l} \delta)$-invariant for all $1 \leq l \leq M$. Since $\delta$ will depend on $\varepsilon, \beta$ and the basis sets $T_i$, so does the integer number $K$. Further, we choose $\hat{T}$ to be a $(TT^{-1}, \delta)$-invariant compact set inheriting all the mentioned invariance properties of $T$. Using Theorem \ref{thm:STP}, we can also make sure that $\hat{T}$ has the special tiling property with respect to $(\{\overline{T}_l\}_{l=1}^M, (S_n), \delta, \beta_1)$, where $0 < \beta_1 < 2^{-M}\delta$ (For 
instance, take $\hat{T}:=U_{\tilde{K}}$ for $\tilde{K} \in \NN$ large enough). 

For $\overline{A}:= \{a \in \hat{T} \,|\, TT^{-1} a \subseteq \hat{T}\}$, it follows from Lemma \ref{lemma:ow1} that $|\overline{A}| \geq (1-\delta) |\hat{T}|$. 
Since $\overline{A} \subseteq s\,A$ with $A:= \{g \in G\,|\, Tg \subseteq \hat{T}\}$ for every $s \in T$, we also have by unimodularity $|A| \geq (1-\delta)|\hat{T}|$.

\item 
Since $\hat{T}$ has the special tiling property with respect to $(\{\overline{T}_l\}_{l=1}^M, (S_n), \delta, \beta_1)$ ($0 < \beta_1 < 2^{-M}\delta$), we can we fix an $\delta$-tiling of $\hat{T}$ as in Theorem \ref{thm:STP}, part (b), where we can make the $\overline{T}_l$-translates in this $\delta$-tiling actually disjoint such that the resulting disjoint translates $\overline{T}_l'(c)c \subseteq \overline{T}_lc$ are still $(T_NT_N^{-1}, 4\delta_0)$-invariant. We note that these disjoint translates $(1-2\delta)$-cover the set $\hat{T}$, i.e. 
\begin{align}
\frac{\left| B(\delta) \right|}{|\hat{T}|} =  \frac{\sum_{l=1}^M \sum_{c \in \overline{C}_l} |\overline{T}'_l(c)c| }{|\hat{T}|} 
\geq& (1-2\delta), 
\quad\text{where}\quad
B(\delta):=\bigcup_{l=1}^M \bigcup_{c \in \overline{C}_l} \overline{T}'_l(c)c.
\label{eqn:inter_cover}
\end{align}

\item 
We have already mentioned that all the sets $\overline{T}_l'(c)$ must still be $(T_NT_N^{-1}, 4\delta_0)$-invariant (recall that $4\delta_0 < 6^{-N}\beta/4$).
Therefore, by Theorem \ref{thm:STP}, we can fix in each translate $\overline{T}_l'(c)c$, $(c \in \overline{C}_l)$ an $\varepsilon$-quasi tiling of $(T_i)_{i=1}^N$ with center sets $C_i(\overline{T}_l'(c)c)$ and 
\begin{eqnarray} \label{eqn:iv}
\left| \frac{|T_iC_i(\overline{T}_l'(c)c)|}{|\overline{T}_l'(c)c|} - \eta_i(\delta) \right| < \beta
\end{eqnarray}
for $1 \leq i \leq N$. Further, we put
\begin{eqnarray*}
\hat{C}_i := \bigcup_{l=1}^M \bigcup_{c \in  \overline{C}_l} C_i(\overline{T}_l'(c)c)
\end{eqnarray*}
for $1 \leq i \leq N$ and note that the $\hat{C}_i$ can be considered as center sets for the F{\o}lner elements $T_i$ such that the $\{T_ic\}_{c \in \hat{C}_i}$ are $\varepsilon$-disjoint and such that for $1 \leq i < j \leq N$, the sets $T_i\hat{C}_i$ and $T_j\hat{C}_j$ are disjoint. Using the fact that $\beta < 2^{-N}\varepsilon$ we obtain for each $l\in\{1,\dots,M\}$:
\begin{align*}
 \left| \bigcup_{i=1}^N T_i C_i(\overline T'_l(c)c) \right|
\geq (1-2\epsilon)\left| \overline T'_l(c) \right|.
\end{align*}
%

\item We now would like to determine the portion of $\hat{T}$ that is covered by each set $T_i\hat{C}_i$. We will see that up to some small error ($2\beta$), this will be $\eta_i(\epsilon)=\varepsilon(1-\varepsilon)^{N-i}$.

Let $i\in\{1,\dots,N\}$ be given. Using the disjointness of the $\overline{T}'_l(c)c$ for all $c \in \overline{C}_l$ and all $1 \leq l \leq M$, inequality \eqref{eqn:inter_cover} and \eqref{eqn:iv} we obtain
\begin{align*}
 |T_i\hat{C}_i| &= \sum_{l=1}^M \sum_{c \in \overline{C}_l} |T_iC_i(\overline{T}_l'(c)c)|   \\
&\geq  \sum_{l=1}^M \sum_{c \in \overline{C}_l} |\overline{T}_l'(c)|(\eta_i(\epsilon)-\beta) 
\geq (1-2\delta)(\eta_i(\epsilon)-\beta)|\hat T|
\geq (\eta_i(\epsilon)-2\beta)|\hat T|,
%
\end{align*}
where the last step is true for sufficiently small $\delta$. Note that this is the first of the announced conditions on the smallness of $\delta$. Let us estimate in the other direction. Estimates \eqref{eqn:iv} and \eqref{eqn:inter_cover} lead to
\begin{align*}
  |T_i\hat{C}_i| \leq \sum_{l=1}^M \sum_{c \in \overline{C}_l} |\overline{T}_l'(c)|(\eta_i(\epsilon)+\beta) 
= (\eta_i(\epsilon)+\beta) |B(\delta)|
\leq  (\eta_i(\epsilon)+\beta) |\hat T|
\end{align*}
The above bounds yield for all $i\in\{1,\dots, N\}$
\begin{align} \label{eqn:small3}
\left| \frac{|T_i\hat{C}_i|}{|\hat{T}|} - \eta_i(\epsilon) \right| < 2 \beta.
\end{align}

\item We will see below that for each $1 \leq i \leq N$, the ratio $\gamma_i:=|\hat{C}_i|/|\hat{T}|$ plays an essential role in our argumentation. Hence, we compare this expression with the ratio $\eta_i(\epsilon)/|T_i|$.
By exploiting $\varepsilon$-disjointness of the $T_i$-translates, as well as (\ref{eqn:small3}), we obtain for each $i\in\{1,\dots,N\}$:
\begin{align}\label{eqn:key1}
 \left|\gamma_i-\frac{\eta_i(\epsilon)}{|T_i|} \right|
\leq  \frac{1}{|T_i|} \, \left| \frac{|\hat{C}_i|\,|T_i|}{|\hat{T}|} - \frac{|T_i\hat{C}_i|}{|\hat{T}|} \right| + \frac{1}{|T_i|}\, \left| \frac{|T_i\hat{C}_i|}{|\hat{T}|} - \eta_i(\epsilon)\right|
\leq 
  \varepsilon \, \frac{|\hat{C}_i|}{|\hat{T}|} + \frac{2\beta}{|T_i|} \, = \, \gamma_i \,\varepsilon + \frac{2\beta}{|T_i|}.
\end{align}

Using the $\varepsilon$-disjointness of the sets $T_i$ (and the rough bound $\varepsilon < 1/2$) it is easy to show that $\sum_{i=1}^N \gamma_i \, |T_i| \leq 2$ holds true.

\item In the next step of the proof, it will be shown that most of the $T$-translates contained in $\hat{T}$ will be $(1-3\varepsilon)$-covered by the fixed pattern $\cup_{i=1}^N T_i \hat{C}_i$. Here, we will have to impose a second restriction on $\delta$.
We recall from step (1) that we chose the set $A$ as the collection of elements $a \in G$ such that the translate $Ta$ lies entirely in $\hat{T}$.
For each $a \in A$, we set
\begin{align*}
X(a):= \frac{\left|Ta \cap \, \left( \hat{T} \setminus B(\delta) \right) \right|}{|Ta|} 
= \frac{\left|Ta \setminus B(\delta) \right|}{|T|}
\end{align*}
and treat $X$ as a uniformly distributed (w.r.t. the Haar measure) random variable on the set~$A$. Thus, it follows from the Chebyshev Inequality that
\begin{align*}
\sqrt{\delta}\left|\{ a \in A\,|\, X(a) > \sqrt{\delta} \}\right|
\leq 
\sum_{a\in A} X(a)
=
\frac{1}{|T|}
 \sum_{a\in A}\sum_{g\in \hat T\setminus B(\delta)}\1_{Ta}(g).
\end{align*}
Using \eqref{eqn:inter_cover} we continue estimating 
\begin{align*}
\frac1{| T|}\sum_{a\in A}\sum_{g\in \hat T\setminus B(\delta)}\1_{Ta}(g)
=  \frac1{| T|}\sum_{g\in \hat T\setminus B(\delta)}\sum_{a\in A}\1_{Ta}(g) 
\leq |\hat T\setminus B(\delta)| \leq 2\delta |\hat T|,
\end{align*}
Applying $|A| \geq (1-\delta)|\hat{T}|$ (see step (1)) and $\delta\leq 1/2$, this yields
\begin{align*}
 \left|\{ a \in A\,|\, X(a) > \sqrt{\delta} \}\right| 
\leq 2\sqrt{\delta}|\hat T| 
\leq \frac{2\sqrt{\delta}|A| }{1-\delta}\leq 4\sqrt{\delta}|A|.
\end{align*}
or equivalently
\begin{align}\label{def:Lambda}
 |\Lambda|\geq (1-4\sqrt{\delta})|A|,\quad\text{where}\quad \Lambda:=\{a\in A\mid X(a)\leq \sqrt{\delta}\}.
\end{align}
We have seen that, up to a portion of $4\sqrt{\delta}$, the translates of $T$ which lie entirely in $\hat T$ are $(1-\sqrt{\delta})$-covered by our tiling. However, as for each $a$ we are interested in a tiling of $Ta$ with \emph{subsets} of $Ta$, we need to delete elements of this covering, which have a non-empty intersection with $G\setminus Ta$. Define for $l\in\{1,\dots,M\}$ and $a\in A$ the sets
\begin{align*}
  \partial(a,l)&:=\Bigl\{c\in \overline C_l\mid \overline T'_l(c)c\cap Ta\neq\emptyset,\, \overline T'_l(c)c\cap (G \setminus Ta)\neq\emptyset \Bigr\},\text{ and}\\
  I(a,l)&:=\Bigl\{c\in \overline C_l\mid \overline T'_l(c)c \subseteq Ta \Bigr\}.
\end{align*}
Then we have for $\tilde c\in \partial(a,l)$ that $\overline T'_l(\tilde c)\tilde c\subseteq \partial_{\overline T_l\overline T_l^{-1}}(Ta)$, which gives
\begin{align}\label{eq:ustp:boundsets}
 \bigcup_{l=1}^{M}\bigcup_{c\in\partial(a,l)}\overline T'_l(c)c
\subseteq \bigcup_{l=1}^{M}\partial_{\overline T_l\overline T_l^{-1}}(Ta).
\end{align}
Hence, using the assumed invariance properties of $T$ in step (1), this yields
 \begin{align}\label{eq:ustp:boundary}
  \frac1{\abs T}\biggl|\bigcup_{l=1}^{M}\bigcup_{c\in\partial(a,l)}\overline T'_l(c)c \biggr|
 \leq 
 \frac1{\abs T}\sum_{l=1}^{M}\bigabs{\partial_{\overline T_l\overline T_l^{-1}}(T)}
 \leq 
 \sum_{l=1}^{M}\frac{\delta}{2^l}
 \leq  \delta.
 \end{align}
Therefore, by the definition of $\Lambda$ we have for each $a\in \Lambda$ the estimate
\begin{align}\label{eq:int_cov2}
 \biggl|\bigcup_{l=1}^{M}\bigcup_{c\in I(a,l)}\overline T'_l(c)c \biggr|
\geq
 \abs{Ta\cap B(\delta)} - \biggl|\bigcup_{l=1}^{M}\bigcup_{c\in\partial(a,l)}\overline T'_l(c)c \biggr|
\geq
 (1-\sqrt{\delta}-\delta)\abs{T}.
\end{align}
Now let us estimate the part of $Ta$, which is covered by translates $T_i$, $i=1,\dots,N$, which lie completely in $Ta$. To this end, we set for each $a\in A$
\[
\tilde C_i(a):=\bigcup_{l=1}^{M}\bigcup_{c\in I(a,l)} C_i(\overline T'_l(c)c),\quad\text{and}\quad
 D(a):=\bigcup_{i=1}^{N} T_i  \tilde C_i(a)\subseteq Ta.
\]
Then, using the disjointness of $\overline T'_l(c)c$, $l\in\{1,\dots,M\}$, $c\in \overline C_l$ and the estimate \eqref{eqn:inter_cover} we have for $a\in\Lambda$:
\begin{align*}
 \abs{D(a)}=\sum_{l=1}^{M}\sum_{c\in I(a,l)}\biggabs{\bigcup_{i=1}^{N}T_i  C_i(\overline T'_l(c)c)}
\geq (1-2\epsilon)\sum_{l=1}^{M}\sum_{c\in I(a,l)}\abs{\overline T'_l(c)}. 
\end{align*}
Thus, imposing another condition on the size of $\delta$ and using \eqref{eq:int_cov2}, we obtain for all $a\in\Lambda$:
\begin{align}\label{eq:prop(iv)1}
\abs{D(a)}
 \geq (1-2\epsilon)(1-\sqrt{\delta}-\delta)\abs{T}
\geq (1-3\epsilon)\abs T.
\end{align}

\item Next, we define the desired family of center sets and verify properties (i) - (iii) Definition \ref{defi:STP} as well as property (I) of Definition \ref{defi:USTP}. In the previous step we already defined the set $\Lambda$. Recall that $\Lambda$ depends on $T=U_k$, which justifies the notion $\Lambda_k:=\Lambda$. 
For each $k\in \NN$, $\lambda\in\Lambda_k$ and $i\in\{1,\dots,N\}$ we set
\begin{align*}
 C_i^\lambda:=C_i^\lambda(T) := \tilde C_i(\lambda)\lambda^{-1}.
\end{align*}
Then we have 
\begin{align}\label{eq:Dlambda}
 \bigcup_{i=1}^{N} T_i C_i^\lambda = D(\lambda)\lambda^{-1}\subseteq T
\end{align}
which shows (i) of Definition \ref{defi:STP}. Properties (ii) and (iii) are fulfilled by construction. The bound in (I) of Definition \ref{defi:USTP} follows from \eqref{eq:Dlambda} and \eqref{eq:prop(iv)1}.

\item 
This step is devoted to prepare the proof of the uniform covering principle, i.e.\ property (II) of Definition \ref{defi:USTP}.
To this end we define $Q:= T_M T_M^{-1}$, which is independent of $k$ the index of $T=U_k$. The choice of $Q$ implies 
\[
  \partial_{Q} (T) \supseteq \bigcup_{l=1}^M\partial_{\overline T_l\overline T_l^{-1}}(T).
\]
 We will show that for each $i\in\{1,\dots,N\}$ and $u\in T\setminus \partial_Q(T)$ we have
\begin{align}\label{eq:aim8}
 \biggl|\frac{1}{\abs\Lambda}\sum_{\lambda\in \Lambda}\1_{C_i^\lambda}(u)-\gamma_i\biggr|\leq \frac{\beta}{\abs{T_i}}.
\end{align}
 To this end, we use $\tilde C_i(\lambda)\subseteq \hat C_i$ to obtain
\begin{align}\label{eq:estsum}
 \sum_{\lambda\in\Lambda}\1_{C_i^\lambda}(u)
=\sum_{\lambda\in\Lambda}\1_{u^{-1}\tilde C_i(\lambda)}(\lambda)
\leq \sum_{\lambda\in\Lambda}\1_{u^{-1}\hat C_i}(\lambda)
= \abs{u\Lambda\cap \hat C_i}
\leq \abs{\hat C_i}.
\end{align}
Next, use \eqref{def:Lambda} and $|A|\geq (1-\delta)\abs{\hat T}$ calculate
\begin{align*}
 \frac{1}{\abs\Lambda}\sum_{\lambda\in\Lambda}\1_{C_i^\lambda}(u)
\leq \frac{\abs{\hat C_i}}{(1-4\sqrt{\delta})\abs A}
\leq \frac{\abs{\hat C_i}}{(1-5\sqrt{\delta})\abs{\hat T}}
=\frac{\gamma_i}{(1-5\sqrt{\delta})}
\end{align*}
As $\gamma_i\leq 1$ we obtain
\begin{align}\label{eq:aim81}
 \frac{1}{\abs\Lambda}\sum_{\lambda\in\Lambda}\1_{C_i^\lambda}(u)-\gamma_i
\leq \frac{1}{1-5\sqrt{\delta}}-1
\leq \frac{\beta}{|T_i|},
\end{align}
where the last inequality holds only for $\delta$ small enough, which is the third assumption on the size of $\delta$.
Let us verify the other bound. To this end, we claim that for each $\lambda\in \Lambda$ and $i\in\{1,\dots,N\}$ we have
\begin{align}\label{eq:setclaim}
 \hat C_i\lambda^{-1} \cap (T\setminus \partial_Q(T)) \subseteq C_i^\lambda
\end{align}
To see this, let $x\in \hat C_i\lambda^{-1} \cap (T\setminus \partial_Q(T)) $ be given. Then we find $l\in\{1,\dots,M\}$ and $c\in \overline C_l$ with $x \in C_i(l,c) \lambda^{-1}$. If $c\in I(\lambda,l)$ we are done, since then $x\in \tilde C_i(\lambda)\lambda^{-1}= C_i^\lambda$. Now, assume that $c\notin I(\lambda,l)$. This implies that $\overline T'_l(c)c\cap G\setminus T\lambda \neq \emptyset$. But as $x\lambda\in  C_i(l,c)$ and $x\in T$ we have 
\[
 x\lambda \in T_i x\lambda \subseteq \overline T'_l(c)c\quad\text{and}\quad x\lambda\in T\lambda
\]
This implies that $x\lambda\in \partial_{\overline T'_l(c)\overline T'_l(c)}(T\lambda)$ and therefore $x\in \partial_{Q}(T)$, which is a contradiction.
 Thus, $x\in I(\lambda,l)$ and \eqref{eq:setclaim} is proven.

Use \eqref{eq:setclaim} and $u\in T\setminus \partial_Q(T)$ to obtain
\[
 \1_{C_i^\lambda}(u) 
\geq  \1_{\hat C_i\lambda^{-1} \cap (T\setminus \partial_Q(T))}(u) 
= \1_{\hat C_i\lambda^{-1} }(u) 
= \1_{u^{-1}\hat C_i}(\lambda)
\]
which implies together with \eqref{eq:estsum} that
\[
 \sum_{\lambda\in \Lambda}\1_{C_i^\lambda}(u)=\abs{u\Lambda \cap \hat C_i}.
\]
Use $\hat C_i\subseteq \hat T$ and $u\Lambda \subseteq \hat T$ to calculate
\begin{align*}
 \gamma_i - \frac{1}{\abs\Lambda}\sum_{\lambda\in \Lambda}\1_{C_i^\lambda}(u)
\leq \frac{|\hat C_i|}{|\hat T|}- \frac{|u\Lambda \cap \hat C_i|}{|\hat T|}
\leq \frac{|\hat T\setminus u \Lambda|}{|\hat T|}
=1- \frac{|\Lambda|}{|\hat T|}
\end{align*}
Now, Inequalities \eqref{def:Lambda} and $|A|\geq (1-\delta)|\hat T|$ yield
\[
 \gamma_i - \frac{1}{\abs\Lambda}\sum_{\lambda\in \Lambda}\1_{C_i^\lambda}(u)
 \leq 
1- \frac{(1-4\sqrt{\delta})(1-\delta)|\hat T|}{|\hat T|}
\leq 5\sqrt{\delta} \leq \frac{\beta}{|T_i|},
\]
where the last inequality holds only for $\delta$ small enough, which is the fourth and last condition on $\delta$.
This, together with \eqref{eq:aim81} shows \eqref{eq:aim8}.

\item Finally, we are able to prove claim (II) of the statement. We combine the Inequalities (\ref{eqn:key1}) and (\ref{eq:aim8}) and yield by means of the triangle inequality for each $u\in T\setminus \partial_Q(T)$:
\begin{align*}
\left| \frac{1}{|\Lambda|} \sum_{\lambda \in \Lambda} \one_{C_i^{\lambda}}(u) - \frac{\eta_i(\epsilon)}{|T_i|} \right| 
\leq \left| \frac{1}{|\Lambda|} \sum_{\lambda \in \Lambda} \one_{C_i^{\lambda}}(u) - \gamma_i \right| + \left| \gamma_i - \frac{\eta_i(\epsilon)}{|T_i|} \right| 
\leq \gamma_i  \varepsilon + \frac{3\beta}{|T_i|} 
\end{align*}
for all $1 \leq i \leq N$ and all $u \in T$, where $\sum_{i=1}^N \gamma_i |T_i| \leq 2$. 
\end{enumerate}
So, we have finally finished the proof of the theorem. 
\end{proof}

\section{An almost additive Ergodic Theorem}

Let $G$ be a discrete, countable amenable group and denote by $\mathcal{F}(G)$ the set of compact (finite) subsets in $G.$
For the following elaborations, we refer to the setting in \cite{LenzSV-10}. 
In their work, the authors prove a Banach space valued ergodic theorem for functions $F:\mathcal{F}(G) \rightarrow (X,\|\cdot\|)$ 
with certain boundedness and additivity conditions (Theorem 3.1). 
However, it has been necessary to impose strong restrictions on the group $G$ under consideration. 
More precisely, one has to assume that $G$ possesses a F{\o}lner sequence $\{Q_n\}$ 
with the property that each element $Q_n$ is a monotile of $G$, 
i.e. for each $n \in \NN$ there is a grid $G_n \subseteq G$ 
such that $\cup_{g \in G_n} Q_ng$ is a disjoint tiling of the group. 
In addition to that it turned out that this assumption is even not sufficient, 
but one also has to require the grid sets to be symmetric (cf.\@ \cite{LenzSV-11}).           
Using the $\varepsilon$-quasi tilings of the previous section, 
we can drop all these restrictions to prove a general Banach valued ergodic theorem (cf.\@ Theorem \ref{thm:ET}).

Assume that we are given a finite set $\mathcal{A}$ of colors. Then each map $\cC: G \rightarrow \mathcal{A}$ defines a coloring of the group. 
If $\cF(G)$ denotes the set of all finite subsets of $G$, then we call a map
\begin{eqnarray*}
P:D(P) \rightarrow \mathcal{A}
\end{eqnarray*} 
a {\em pattern} with $D(P) \in \cF(G)$ as the {\em domain} of $P$. 
The set of all patterns is denoted by $\cP$ and for a fixed $Q \in \cF(G)$ 
the subset of $\cP$ which contains only the patterns with domain $Q$ is denoted by $\cP(Q)$. 
Given a set $Q \subseteq D(P)$ and an element $x \in G$, 
we furthermore define a {\em restriction of a pattern} by
\begin{eqnarray*}
P_{|Q}:Q \rightarrow \mathcal{A}: g \mapsto P_{|Q}(g) = P(g),
\end{eqnarray*}
as well as a  {\em translation of a pattern} by
\begin{eqnarray*}
Px: D(P)x \rightarrow \mathcal{A}: yx \mapsto P(y).
\end{eqnarray*}
Translations and restrictions of colorings are defined equivalently.
Two patterns are called {\em equivalent} if one is the translation of the other. 
The equivalence class of a pattern $P$ is then denoted by $\tilde{P}$. We write $\tilde{\cP}$ for the induced set of equivalence classes in $\cP$. For two patterns $P$ and $P^{'}$,
 the {\em number of occurrences} of the pattern $P$ in $P^{'}$ is denoted by
\begin{eqnarray*}
\#_P(P^{'}):= \left|\{x \in G\,|\, D(P)x \subseteq D(P^{'}), \, P^{'}_{|D(P)x}=Px\}\right|.
\end{eqnarray*} 
Counting occurrences of patterns along a F{\o}lner sequence $(U_j)_{j \in \NN}$ leads to the definition of {\em frequencies}. 
If for a pattern $P$ and a F{\o}lner sequence $(U_j)_{j \in \NN}$ the limit
\begin{eqnarray*}
\nu_P:= \lim_{j \rightarrow\infty} \frac{\#_P(\cC_{|U_j})}{|U_j|}
\end{eqnarray*} 
exists, we call $\nu_P$ the {\em frequency of $P$ in the coloring $\cC$ along $(U_j)_{j \in \NN}$}.

\begin{Definition}[boundary term] \label{defi:BT}
A function $b:\cF(G) \rightarrow [0,\infty)$ is called a {\em boundary term} if
\begin{enumerate}[(i)]
\item $b(Q) = b(Qx)$ for all $x \in G$ and all $Q \in \cF(G)$.
\item $\lim_{j \rightarrow \infty} \frac{b(U_j)}{|U_j|} = 0$ for any F{\o}lner sequence $(U_j)_{j \in \NN}$.
\item there exists $D > 0$ with $b(Q) \leq D|Q|$ for all $Q \in \cF(G)$.
\item one has for all $Q,Q'\in \cF(G)$
\[
 b(Q\cap Q')\leq b(Q)+b(Q'),\quad 
 b(Q\cup Q')\leq b(Q)+b(Q'),\quad
 b(Q\setminus Q') \leq b(Q)+b(Q').
\]
\end{enumerate}
For a pattern $P$ we define $b(P):=b(D(P))$. Note that due to property (i), the value $b(P)$ depends only on the equivalence class of a pattern. 
\end{Definition}

\begin{Definition}\label{defi:AA2}
 Let a Banach-space $(X,\Vert \cdot \Vert)$, a finite set $\cA$, a coloring $\cC: G\to \cA$ and a function $F:\cF(G)\to X$ be given.
\begin{itemize}
 \item $F$ is called \emph{almost additive} if there is boundary term $b:\cF(G)\to [0,\infty)$ such that for any disjoint $Q_1,\dots,Q_k\in \cF(G)$ one has
\[
 \left\Vert F(Q) - \sum_{i=1}^k F(Q_i) \right\Vert  \leq \sum_{i=1}^k b(Q_i),
\]
 where $Q=\bigcup_{i=1}^k Q_i$.
 \item $F$ is called \emph{$\cC$-invariant} if for any $Q,U\in \cF(G)$ the equivalence of the patterns $\cC_{|Q}$ and $\cC_{|U}$ implies $F(Q)=F(U)$.
\end{itemize}
\end{Definition}
Given an almost additive and $\cC$-invariant function $F:\cF(G)\to X$ we define $\tilde F:{\cP} \to X$ by
\begin{align}\label{eqn:tildeF}
 \tilde F(P) =\begin{cases} F(Q)&\text{ if }Q \in \cF(G)\text{ such that } \tilde{\cC_{|Q}}=\tilde P\\ 0 &\text{ else.}  \end{cases}
\end{align}
This is well defined by the $\cC$-invariance of $F$. The next result gives properties of the functions $F$ and $\tilde F$.

\begin{Lemma}\label{la:epsaad}
 Let a Banach space $(X,\Vert \cdot \Vert)$,a finite set $\cA$ and a coloring $\cC: G\to \cA$ be given. Furthermore let $F:\cF(G)\to X$ be  $\cC$-invariant and almost additive with boundary term $b$.
\begin{itemize}
 \item[(i)] Then there exists a constant $C>0$ such that
\[
   \Vert F(Q)\Vert\leq C|Q| 
\quad\text{and}\quad\Vert 
  \tilde F(P)\Vert\leq C|D(P)|,
\]
for all $Q\in\cF(G)$ and $P\in\cP$, where $\tilde F$ is given by \eqref{eqn:tildeF}.
 \item[(ii)] If furthermore $0<\epsilon<1/2$ is given and $Q_i$, $i=1,\dots,k$ are $\epsilon$-disjoint sets and $Q=\bigcup_{i=1}^k Q_i$, then 
\[
 \left\Vert F(Q)-\sum_{i=1}^k F(Q_i)\right\Vert \leq (3C+9D)\epsilon |Q| +  3\sum_{i=1}^k b(Q_i),  
\]
where $C$ is the constant from (i) and $D$ is given by property (iii) of the boundary term.
\end{itemize}
\end{Lemma}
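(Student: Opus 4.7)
For part (i), the key observation is that $\cC$-invariance forces $F(\{g\})$ to depend only on the color $\cC(g)\in\cA$, so finiteness of $\cA$ makes $C_0:=\sup_{g\in G}\|F(\{g\})\|$ a finite constant. For a general $Q\in\cF(G)$, decomposing $Q=\bigcup_{g\in Q}\{g\}$ into singletons and applying almost additivity together with property (iii) of the boundary term yields
\[
\|F(Q)\|\leq \sum_{g\in Q}\|F(\{g\})\|+\sum_{g\in Q}b(\{g\})\leq (C_0+D)|Q|,
\]
so $C:=C_0+D$ works. The bound on $\tilde F(P)$ follows immediately: either $\tilde P=\tilde{\cC_{|Q}}$ for some $Q\in\cF(G)$, and then $\tilde F(P)=F(Q)$ with $|Q|=|D(P)|$, or $\tilde F(P)=0$.

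For part (ii), the plan is to disjointize and telescope. Using the $\epsilon$-disjointness I extract pairwise disjoint $\tilde Q_i\subseteq Q_i$ with $|\tilde Q_i|\geq (1-\epsilon)|Q_i|$, and set $R_i:=Q_i\setminus\tilde Q_i$, so that $|R_i|\leq \epsilon|Q_i|$. Since the $\tilde Q_i$ are disjoint and contained in $Q$, one has $(1-\epsilon)\sum_i|Q_i|\leq \sum_i|\tilde Q_i|\leq |Q|$, which together with $\epsilon<1/2$ gives $\sum_i|Q_i|\leq 2|Q|$ and hence $\sum_i|R_i|\leq 2\epsilon|Q|$. Inserting the telescoping term $\sum_i F(\tilde Q_i)$ splits the error as
\[
\left\|F(Q)-\sum_i F(Q_i)\right\|\leq \left\|F(Q)-\sum_i F(\tilde Q_i)\right\|+\sum_i\|F(\tilde Q_i)-F(Q_i)\|.
\]
The first summand is bounded by $\sum_i b(\tilde Q_i)$ via almost additivity on the genuine disjoint partition $Q=\bigcup_i\tilde Q_i$. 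Each term of the second summand is controlled by applying almost additivity to $Q_i=\tilde Q_i\cup R_i$ together with part (i), giving $\|F(\tilde Q_i)-F(Q_i)\|\leq C|R_i|+b(\tilde Q_i)+b(R_i)$. Property (iv) of the boundary term then yields $b(\tilde Q_i)\leq b(Q_i)+b(R_i)\leq b(Q_i)+D|R_i|$ and $b(R_i)\leq D|R_i|$; combining these with $\sum_i|R_i|\leq 2\epsilon|Q|$ produces a bound of the required form $\alpha\sum_i b(Q_i)+\beta\epsilon|Q|$ with constants matching (up to the indicated factors) the stated inequality.

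The main technical subtlety lies in the disjointization step $|\tilde Q_i|\geq (1-\epsilon)|Q_i|$: for arbitrary pairwise $\epsilon$-disjoint families in the sense of Definition \ref{defi:epsdis} this is not entirely automatic, but for the $\epsilon$-disjoint families actually produced in this paper (e.g.\ via Lemma \ref{lemma:ow4}\,(iii) or Theorem \ref{thm:STP}\,(b)) such disjointifications are built into the construction itself. This is precisely where the concrete form of the hypothesis is used.
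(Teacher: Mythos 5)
Your argument for part (i) is correct and is surely what the authors had in mind: $\cC$-invariance together with finiteness of $\cA$ bounds $\Vert F(\{g\})\Vert$ uniformly by some $C_0$, and decomposing $Q$ into singletons via almost additivity together with $b(\{g\})\leq D$ gives $\Vert F(Q)\Vert\leq (C_0+D)|Q|$; the estimate for $\tilde F$ follows since $\tilde F(P)$ is either $0$ or equals $F(Q)$ with $|Q|=|D(P)|$.

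For part (ii) your telescoping strategy is the natural one and the arithmetic lands comfortably within the stated constants, but the issue you flag about the disjointization step is not a mere technicality — it is a genuine gap, and the lemma as literally stated is in fact false. For a merely pairwise $\epsilon$-disjoint family in the sense of Definition~\ref{defi:epsdis} one cannot in general extract pairwise disjoint $\tilde Q_i\subseteq Q_i$ with $|\tilde Q_i|\geq(1-\epsilon)|Q_i|$ covering $Q$, and there is no bound of the form $\sum_i|Q_i|\leq(1+c\,\epsilon)|Q|$. Concretely, take $F(Q):=|Q|$, which is additive ($b\equiv 0$, $C=1$, $D>0$ arbitrary) and $\cC$-invariant. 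For $k\geq 3$ and $\epsilon=\frac{1}{2(k-1)}<\frac12$ one can realise $k$ subsets $Q_1,\dots,Q_k$ of a countable group, each of cardinality $n$, such that every point of $Q=\bigcup_i Q_i$ lies in exactly two of the $Q_i$ (pairwise intersections of size $2\epsilon n$, triple intersections empty, no private part). Splitting each pairwise intersection in half exhibits the required $S_i$'s, so the family is pairwise $\epsilon$-disjoint, yet $\sum_i|Q_i|=2|Q|$ and hence $\Vert F(Q)-\sum_i F(Q_i)\Vert=|Q|$, which exceeds $(3C+9D)\epsilon|Q|$ as soon as $\epsilon$ is small. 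What rescues both the lemma and your proof in the paper's actual applications is exactly the extra structure you point to: the families produced by Lemma~\ref{lemma:ow4}\,(iii) and Theorem~\ref{thm:STP}\,(b) come with built-in pairwise disjoint subsets of relative mass at least $1-\epsilon$ that still cover the same union (the $K_jc_j$, resp.\ the $T_i^{(c)}c$), which is precisely what your telescoping estimate needs. Since the paper disposes of this lemma in a single sentence one cannot compare approaches, but strictly speaking the hypothesis of part (ii) should be read as (or strengthened to) this disjointifiable/sequential form of $\epsilon$-disjointness; your proof is then complete as written.
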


\begin{proof}
These are straightforward consequences of the properties of $F$ and $b$. 
\end{proof}

For the sake of clarity, we summarize our major assumptions. 


\begin{Assumption}\label{ass:first}
Denote by $G$ a countable amenable group, 
let ${\mathcal A}$ be a finite set and consider a map ${\mathcal C}: G\rightarrow {\mathcal A}$, which will be called a coloring. 
Also, we assume that we are given a F{\o}lner sequence $(U_j)_{j\in \NN}$ in $G$ 
along which the frequencies $\nu_P$ exist for all patterns $P\in \cP$. 
As pointed out before, $(X,\Vert \cdot \Vert)$ stands for a Banach-space. 
\end{Assumption}

\begin{Theorem}\label{thm:ET}
 Assume \ref{ass:first} and let the function $F:\cF(G) \to X$ be almost additive and $\cC$-invariant and let $\tilde F$ be given as in \eqref{eqn:tildeF}. 
Let $(S_n)_{n\in \NN}$ be a nested F\o lner sequence. 
Then the following statements hold.
\begin{itemize}
 \item [(i)] There exists an element $\bar F\in X$ such that
\[
 \lim_{j\to\infty}\left\Vert \bar F - \frac{F(U_j)}{|U_j|}  \right\Vert = 0.
\]
 \item [(ii)] The element $\bar F$ can be expressed as the limit
\[
 \bar F=\lim_{\varepsilon \searrow 0} \,\sum_{i=1}^{N(\epsilon)} \eta_i(\varepsilon) \sum_{P \in \cP(T_i^\epsilon)} \nu_P \, \frac{\tilde{F}(P)}{|T_i^\epsilon|},
\]
where for each $0<\epsilon<1/10$, we set $N(\epsilon):= \lceil \log(\varepsilon)/\log(1- \varepsilon)] \rceil$ 
and $\eta_i(\varepsilon) := \varepsilon(1-\varepsilon)^{N(\epsilon)-i}$ for $i=1,\dots,N(\epsilon)$ 
and where the finite sequence $(T_i^\epsilon)_{i=1}^{N(\epsilon)}$ is given as in Definition \ref{defi:STP} 
with parameters $\beta=2^{-N(\varepsilon)-1} \varepsilon$ and $\delta_0(\beta) < 6^{-N(\varepsilon)}\beta / 16$. 
Each $T_i^\epsilon$ is an element of the sequence~$(S_n)$.
\item [(iii)] For every $0<\epsilon<1/10$, there is some $j_0 := j_0(\varepsilon, \beta) \in \NN$ such that for every $j \geq j_0$, the difference
\begin{eqnarray*}
\Delta(j, \varepsilon) := \left\| \frac{F(U_j)}{|U_j|} - \sum_{i=1}^{N(\epsilon)} \eta_i(\varepsilon) \sum_{P \in \cP(T_i^\epsilon)} \nu_P \, \frac{\tilde{F}(P)}{|T_i^\epsilon|}  \right\|
\end{eqnarray*}
satisfies the estimate 
\begin{align}
\Delta(j, \varepsilon) &\leq (12C+33D) \epsilon  
+C \, \sum_{i=1}^{N(\epsilon)} \eta_i(\varepsilon) \sum_{P \in \cP(T_i^\epsilon)} \left| \frac{\#_P(\cC_{|U_j})}{|U_j|} - \nu_P \right|\nonumber \\
&\quad +4 \sum_{i=1}^{N(\epsilon)}  \eta_i(\varepsilon) \frac{b(T_i^\epsilon )}{|T_i^\epsilon|}
+(C+4D)\frac{|\partial_Q(U_j)|}{|U_j|}\sum_{i=1}^{N(\epsilon)}|T_i^\epsilon|. \label{eqn:EST}
\end{align}
\end{itemize}
\end{Theorem}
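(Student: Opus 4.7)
The plan is to first prove the effective bound (iii), from which (i) and (ii) will follow by a Cauchy-in-$X$ argument. Fix $0<\varepsilon<1/10$, set $\beta=2^{-N(\varepsilon)-1}\varepsilon$, and for all sufficiently large $j$ apply Theorem~\ref{thm:USTP} to the F\o lner element $U_j$. This yields basis sets $T_1^\varepsilon\subseteq\dots\subseteq T_{N(\varepsilon)}^\varepsilon$ drawn from $(S_n)$, a finite set $Q\subseteq G$, an index set $\Lambda_j\subseteq G$, and a family $\{C_i^\lambda(U_j)\}$ satisfying properties (i)--(iii) of Definition~\ref{defi:STP} together with (I) and (II) of Definition~\ref{defi:USTP}.

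For each fixed $\lambda\in\Lambda_j$, the translates $T_i^\varepsilon c$ with $c\in C_i^\lambda(U_j)$ form an $\varepsilon$-disjoint subfamily of $U_j$ whose union covers at least a $(1-4\varepsilon)$-portion of $U_j$. Writing $R_\lambda:=U_j\setminus\bigcup_{i,c}T_i^\varepsilon c$ and exploiting $\cC$-invariance to identify $F(T_i^\varepsilon c)=\tilde F(\cC_{|T_i^\varepsilon c})$, I apply Lemma~\ref{la:epsaad}(ii) to the $\varepsilon$-disjoint decomposition of $U_j$ into the translates plus $R_\lambda$; this bounds
\[
\Bigl\|F(U_j) - \sum_{i=1}^{N(\varepsilon)}\sum_{c\in C_i^\lambda(U_j)}\tilde F(\cC_{|T_i^\varepsilon c})\Bigr\|
\]
by an $O(\varepsilon|U_j|)$ expression (combining $(3C+9D)\varepsilon|U_j|$ from Lemma~\ref{la:epsaad}(ii), the trivial estimate $\|F(R_\lambda)\|\le C|R_\lambda|$ from Lemma~\ref{la:epsaad}(i), and the bound $3b(R_\lambda)\le 3D|R_\lambda|$) together with $3\sum_{i,c}b(T_i^\varepsilon)$. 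Dividing through by $|U_j|$ and averaging over $\lambda\in\Lambda_j$ brings property~(II) into play: for every $u\in U_j\setminus\partial_Q(U_j)$, the average $|\Lambda_j|^{-1}\sum_{\lambda}\one_{C_i^\lambda(U_j)}(u)$ may be replaced by $\eta_i(\varepsilon)/|T_i^\varepsilon|$ up to an additive error $3\beta/|T_i^\varepsilon|+\varepsilon\gamma_i$ with $\sum_i\gamma_i|T_i^\varepsilon|\le 2$.

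Next I regroup the averaged sum $\sum_i (\eta_i(\varepsilon)/|T_i^\varepsilon|)\sum_{u\in U_j\setminus\partial_Q(U_j)}\tilde F(\cC_{|T_i^\varepsilon u})$ by equivalence classes of patterns in $\cP(T_i^\varepsilon)$: since $\tilde F$ is constant on equivalence classes, extending the inner summation to all $u\in G$ with $T_i^\varepsilon u\subseteq U_j$ (picking up only a $|\partial_Q(U_j)|$-sized error controlled via Lemma~\ref{la:epsaad}(i)) yields $\sum_{P\in\cP(T_i^\varepsilon)}\#_P(\cC_{|U_j})\tilde F(P)$. Inserting and subtracting $\nu_P$ and again using $\|\tilde F(P)\|\le C|T_i^\varepsilon|$ produces the frequency-discrepancy term of \eqref{eqn:EST}, and collecting all the remaining error contributions yields precisely the stated bound. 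For (i) and (ii), fix $\varepsilon$ and let $j\to\infty$; the assumed existence of the $\nu_P$ together with the weak F\o lner property $|\partial_Q(U_j)|/|U_j|\to 0$ imply
\[
\|F(U_j)/|U_j|-A_\varepsilon\|\le (12C+33D)\varepsilon+4\sum_{i=1}^{N(\varepsilon)}\eta_i(\varepsilon)\frac{b(T_i^\varepsilon)}{|T_i^\varepsilon|}+o(1),
\]
where $A_\varepsilon:=\sum_i\eta_i(\varepsilon)\sum_P\nu_P\tilde F(P)/|T_i^\varepsilon|$. Since $T_i^\varepsilon\in\{S_n:n\ge i\}$ and $(S_n)$ is F\o lner, the sequence $\alpha_i:=\sup_{n\ge i}b(S_n)/|S_n|$ is a null sequence, so Lemma~\ref{lemma:ele4} applied to $(\alpha_i)$ shows the boundary-average term vanishes as $\varepsilon\searrow 0$. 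Consequently $(A_\varepsilon)$ is Cauchy in $X$, and its limit $\bar F$ satisfies (i) and (ii).

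The main obstacle is the bookkeeping in the averaging step: after averaging the per-$\lambda$ almost-additivity bound over $\Lambda_j$, one must swap the order of summation to rewrite $|\Lambda_j|^{-1}\sum_\lambda \sum_{c\in C_i^\lambda}\tilde F(\cC_{|T_i^\varepsilon c})=\sum_u \bigl(|\Lambda_j|^{-1}\sum_\lambda\one_{C_i^\lambda}(u)\bigr)\tilde F(\cC_{|T_i^\varepsilon u})$, invoke property~(II) only on the good part $U_j\setminus\partial_Q(U_j)$, estimate the bad strip $\partial_Q(U_j)$ separately via the trivial bound $\|\tilde F\|\le C|T_i^\varepsilon|$, and finally collapse everything into the pattern-frequency form. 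The delicate point is tracking all error contributions (from $\varepsilon$-disjointness, the uncovered remainder $R_\lambda$, the boundary strip, and the pattern identification) with sufficient uniformity to recover the four clean summands of \eqref{eqn:EST}.
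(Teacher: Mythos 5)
Your route is essentially the one taken in the paper: you split $\Delta(j,\varepsilon)$ into (a) the almost-additivity error of replacing $F(U_j)$ by the sum of tile contributions, (b) the error in replacing the $\lambda$-averaged indicator $|\Lambda_j|^{-1}\sum_\lambda\one_{C_i^\lambda}(u)$ by $\eta_i(\varepsilon)/|T_i^\varepsilon|$ via property~(II) of the USTP, and (c) the frequency discrepancy $\#_P(\cC_{|U_j})/|U_j|-\nu_P$; the paper's proof is exactly this $D_1+D_2+D_3$ decomposition, also followed by a Cauchy argument for (i), (ii) powered by Lemma~\ref{lemma:ele4}. The one place where you deviate is the bookkeeping in (a): you apply Lemma~\ref{la:epsaad}(ii) in one shot to the $\varepsilon$-disjoint cover $\{T_i^\varepsilon c\}\cup\{R_\lambda\}$ of $U_j$ and then discard $F(R_\lambda)$ and $3b(R_\lambda)$ using $|R_\lambda|\leq 4\varepsilon|U_j|$, whereas the paper first compares $F(U_j)$ with $F(A_{j,\lambda})$ (incurring $b(A_{j,\lambda})+b(U_j\setminus A_{j,\lambda})+\|F(U_j\setminus A_{j,\lambda})\|$), and only then applies Lemma~\ref{la:epsaad}(ii) to the tiles inside $A_{j,\lambda}$, bounding $b(A_{j,\lambda})\leq\sum_{i,c}b(T_i^\varepsilon c)$. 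These are equivalent in spirit but allocate the error slightly differently: yours produces a coefficient $3$ on $\sum_i\eta_i\,b(T_i^\varepsilon)/|T_i^\varepsilon|$ and roughly $(12C+36D)\varepsilon$ in the linear-in-$\varepsilon$ term, while the paper gets $4$ and $(12C+33D)\varepsilon$. Since the theorem states the latter specific constants in \eqref{eqn:EST}, you would need to re-split through $F(A_{j,\lambda})$ as the paper does if you insist on reproducing the bound verbatim; otherwise the argument is sound and essentially identical.
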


\begin{Remark}
Note that this theorem is in fact a true generalization of Theorem 3.1 in \cite{LenzSV-10, LenzSV-11}. 
Due to the fact that in the latter works, 
the authors are able to work with a F{\o}lner consisting of monotiles for symmetric grids, 
all the tilings under consideration consist only of one tile rather than of a basis $\{T_i^{\varepsilon}\}_{i=1}^{N(\varepsilon)}$. 
Thus, in this specific situation, there occurs no $\varepsilon$-parameter, neither in the semi-explicit formula, nor in the corresponding error estimate. 
More precisely, the first sum from $1$ to $N(\varepsilon)$ with weights $\eta_i(\varepsilon)$ does not show up in all the latter expressions. 
\end{Remark}

\begin{proof}
Throughout the proof, we let $0< \varepsilon < 1/10$ be fixed. We first show the Estimate (\ref{eqn:EST}). 
Choose $j_0 = j_0(\varepsilon, \beta, T_i^{\varepsilon})\in \NN$ such that for each $j\geq j_0$ the set $U_j$ 
is sufficiently invariant to apply Theorem \ref{thm:USTP}. 
Therefore for each $j\geq j_0$ we find a finite family $\Lambda_j^\epsilon$ of $\varepsilon$-quasi tilings for the set $T=U_j$ satisfying the uniform special tiling property (USTP), 
cf.\@ Definition \ref{defi:USTP}. 
With no loss of generality, we assume that all the $T_i = T_i^{\varepsilon}$ are taken from a subsequence $\{S_{n_k}\}_{k=1}^{\infty}$ 
such that the expressions $b(S_{n_k})/|S_{n_k}|$ converge to zero monotonically as $k \rightarrow \infty$. 
More precisely, we make sure that $T_i^{\varepsilon} \in \{S_{n_l} \,|\, l \geq i \}$ for all $1 \leq i \leq N$. 
Then, for fixed $j\geq j_0$ we estimate
\begin{align*} 
\Delta(j,\varepsilon) &\leq 
\left\| \frac{F(U_j)}{|U_j|} - \frac{1}{|\Lambda_j^\epsilon|}\,\sum_{\lambda \in \Lambda_j^\epsilon} \sum_{i=1}^{N(\epsilon)} \sum_{c \in C_i^{\lambda}(U_j) } \frac{F(T_i^\epsilon c)}{|U_j|} \right\| \\
&\quad + \left\| \frac{1}{|\Lambda_j^\epsilon|}\,\sum_{\lambda \in \Lambda_j^\epsilon} \sum_{i=1}^{N(\epsilon)} \sum_{c \in C_i^{\lambda}(U_j)} \frac{F(T_i^\epsilon c)}{|U_j|} - \sum_{i=1}^{N(\epsilon)} \eta_i(\varepsilon) \sum_{P \in \cP(T_i^\epsilon)} \frac{\#_P(\cC_{|U_j})}{|U_j|}\frac{\tilde{F}(P)}{|T_i^\epsilon|} \right\| \\
&\quad + \left\| \sum_{i=1}^{N(\epsilon)} \eta_i(\varepsilon) \sum_{P \in \cP(T_i^\epsilon)} \Big( \frac{\#_P(\cC_{|U_j})}{|U_j|} - \nu_P \Big) \, \frac{\tilde{F}(P)}{|T_i^\epsilon|}  \right\|. 
\end{align*}
Again by the triangle inequality, we then obtain
\begin{eqnarray*}
\Delta(j, \varepsilon) \leq D_1(j, \varepsilon) + D_2(j, \varepsilon) + D_3(j,\varepsilon),
\end{eqnarray*}
where
\begin{align*}
D_1(j, \varepsilon) &:= \frac{1}{|U_j||\Lambda_j^\epsilon|}\sum_{\lambda\in\Lambda_j^\epsilon} \left\| F(U_j)- \sum_{i=1}^{N(\epsilon)} \sum_{c \in C_i^{\lambda}(U_j)} F(T_i^\epsilon c) \right\|, \\
D_2(j, \varepsilon) &:= \frac 1 {|U_j|} \left\| \frac{1}{|\Lambda_j^\epsilon|}\,\sum_{\lambda \in \Lambda_j^\epsilon} \sum_{i=1}^{N(\epsilon)} \sum_{c \in C_i^{\lambda}(U_j)} F(T_i^\epsilon c) - \sum_{i=1}^{N(\epsilon)} \frac{\eta_i(\varepsilon)}{|T_i^\epsilon|} \sum_{P \in \cP(T_i^\epsilon)} \#_P(\cC_{|U_j})\tilde{F}(P) \right\|, \text{ and} \\
D_3(j, \varepsilon) &:= \sum_{i=1}^{N(\epsilon)} \eta_i(\varepsilon) \sum_{P \in \cP(T_i^\epsilon)} \left| \frac{\#_P(\cC_{|U_j})}{|U_j|} - \nu_P \right| \, \frac{\|\tilde{F}(P)\|}{|T_i^\epsilon|}.
\end{align*}
With the boundedness of $\tilde{F}$, see Lemma \ref{la:epsaad}, we arrive at
\begin{eqnarray} \label{eqn:ESTD3}
D_3(j, \varepsilon) \leq C \, \sum_{i=1}^{N(\epsilon)} \eta_i(\varepsilon) \sum_{P \in \cP(T_i^\epsilon)} \left| \frac{\#_P(\cC_{|U_j})}{|U_j|} - \nu_P \right|.
\end{eqnarray}
In order to estimate $D_1(j,\varepsilon)$ we make use of the almost additivity of the function $F$ and use part (ii) of Lemma \ref{la:epsaad}. 
So, for each $j\geq j_0$ and $\lambda\in\Lambda_j^\epsilon$ we have
\begin{multline*}
\left\| F(U_j)- \sum_{i=1}^{N(\epsilon)} \sum_{c \in C_i^{\lambda}(U_j)} F(T_i^\epsilon c) \right\|
\leq 
\left\| F(U_j)- F\left(A_{j,\lambda}^\epsilon\right) \right\|
+\left\| F\left(A_{j,\lambda}^\epsilon\right)- \sum_{i=1}^{N(\epsilon)} \sum_{c \in C_i^{\lambda}(U_j)} F(T_i^\epsilon c) \right\|\\
\leq b(A_{j,\lambda}^\epsilon)+b(U_j\setminus A_{j,\lambda}^\epsilon) + \|F(U_j\setminus A_{j,\lambda}^\epsilon)\|+ 3 \sum_{i=1}^{N(\epsilon)} \sum_{c \in C_i^{\lambda}(U_j)}\!\!\!\! b(T_i^\epsilon c) +(3C+9D)\epsilon |U_j|,
\end{multline*}
where
\[
 A_{j,\lambda}^\epsilon=\bigcup_{i=1}^{N(\epsilon)} \bigcup_{c \in C_i^{\lambda}(U_j)} T_i^\epsilon c.
\]
Recall that by the USTP, for each $\lambda \in \Lambda$, the set $U_j$ is $(1-4\varepsilon)$-covered by those translates $T_ic$, $1 \leq i \leq N$, $c \in C_i^{\lambda}(U_j)$ that are fully contained in $U_j$. 
Therefore we have $|U_j\setminus A_{j,\lambda}^\epsilon|\leq 4\epsilon |U_j|$. 
Using this and properties of the boundary term $b$ we obtain
\begin{align*}
 D_1(j,\epsilon)&\leq \frac{1}{|U_j||\Lambda_j^\epsilon|}  \sum_{\lambda\in\Lambda_j^\epsilon} \left( 4\sum_{i=1}^{N(\epsilon)} \sum_{c \in C_i^{\lambda}(U_j)}\!\!\!\! b(T_i^\epsilon c) + (7C+13D)\epsilon |U_j|\right) \\
&\leq (7C+13D)\epsilon +\frac{4}{|U_j||\Lambda_j^\epsilon|}  \sum_{\lambda\in\Lambda_j^\epsilon} \left( \sum_{i=1}^{N(\epsilon)}  |C_i^{\lambda}(U_j)| b(T_i^\epsilon )\right) \\
&= (7C+13D)\epsilon+4\sum_{i=1}^{N(\epsilon)}  \frac{b(T_i^\epsilon )}{|\Lambda_j^\epsilon|}  \sum_{\lambda\in\Lambda_j^\epsilon}    \frac{|C_i^{\lambda}(U_j)|}{|U_j|} .
\end{align*}
Next, we make use of property (II) of Definition \ref{defi:USTP}:
\begin{align*}
 \frac{1}{|\Lambda_j^\epsilon|}\sum_{\lambda\in\Lambda_j^\epsilon}    \frac{|C_i^{\lambda}(U_j)|}{|U_j|}
&\leq \frac{1}{|\Lambda_j^\epsilon||U_j|}\sum_{u\in U_j\setminus\partial_Q(U_j)}\sum_{\lambda\in\Lambda_j^\epsilon}    \1_{C_i^\lambda(U_j)}(u)
+\frac{1}{|\Lambda_j^\epsilon||U_j|}\sum_{u \in \partial_Q(U_j)} \sum_{\lambda\in\Lambda_j^\epsilon}    \1_{C_i^\lambda(U_j)}(u)\\
&\leq \frac{|U_j\setminus\partial_Q(U_j)|}{|U_j|} \left(\frac{\eta_i(\varepsilon)}{|T_i^\epsilon|}+\frac{3\beta}{|T_i^\epsilon|}+\epsilon \gamma_i^{\epsilon,j} \right) + \frac{1}{|\Lambda_j^\epsilon||U_j|}\sum_{u \in \partial_Q(U_j)} \sum_{\lambda\in\Lambda_j^\epsilon}    1\\
&\leq \frac{\eta_i(\varepsilon)}{|T_i^\epsilon|}+\frac{3\beta}{|T_i^\epsilon|}+\epsilon \gamma_i^{\epsilon,j} + \frac{|\partial_Q(U_j)|}{|U_j|} 
\end{align*}
Inserting this in the last estimate for $D_1(j,\epsilon)$ using properties of the boundary term $b$ and $\sum_{i=1}^{N(\epsilon)}\gamma_i^{\epsilon,j} |T_i^\epsilon| \leq 2$ yields
\begin{align}
 D_1(j,\epsilon)
&\leq   (7C+13D)\epsilon+ 4\sum_{i=1}^{N(\epsilon)}  b(T_i^\epsilon )\left(\frac{\eta_i(\varepsilon)}{|T_i^\epsilon|}+\frac{3\beta}{|T_i^\epsilon|}+\epsilon \gamma_i^{\epsilon,j} + \frac{|\partial_Q(U_j)|}{|U_j|}  \right) \nonumber  \\
 &\leq  (7C+13D)\epsilon+ 4 \sum_{i=1}^{N(\epsilon)}  \eta_i(\varepsilon) \frac{b(T_i^\epsilon )}{|T_i^\epsilon|}+12\beta D N(\epsilon) +8\epsilon D + 4 \frac{|\partial_Q(U_j)|}{|U_j|} \sum_{i=1}^{N(\epsilon)}  b(T_i^\epsilon ) \nonumber\\
&\leq  (7C+33D)\epsilon+ 4 \sum_{i=1}^{N(\epsilon)}  \eta_i(\varepsilon) \frac{b(T_i^\epsilon )}{|T_i^\epsilon|}+4 D \frac{|\partial_Q(U_j)|}{|U_j|} \sum_{i=1}^{N(\epsilon)} |T_i^\epsilon|, \label{eqn:ESTD1}
\end{align}
where the last step uses $\beta N(\epsilon)\leq \epsilon$.
%

By exploiting the nice property (II) of Definition \ref{defi:USTP} again, we finally estimate $D_2(j, \varepsilon)$. Before we do so, we change the order of summation to obtain
\[
 \sum_{P \in \cP(T_i^\epsilon)} \#_P(\cC_{|U_j})  \tilde{F}(P)
=\sum_{\ato{u \in U_j}{T_i^\epsilon u \subseteq U_j}}   {F}(T_i^\epsilon u).
\]
Therefore, we have
\begin{align*}
D_2(j, \varepsilon) 
&= \frac{1}{|U_j|} \left\| \sum_{i=1}^{N(\epsilon)} \sum_{u \in U_j} \frac{1}{|\Lambda_j^\epsilon|} \sum_{\lambda\in\Lambda_j^\epsilon} \one_{C_i^{\lambda}(U_j)} (u) F(T_i^\epsilon u) - \sum_{i=1}^{N(\epsilon)} \frac{\eta_i(\varepsilon)}{|T_i^\epsilon|} \sum_{\ato{u \in U_j}{T_i^\epsilon u \subseteq U_j}}   {F}(T_i^\epsilon u) \right\| \\
&\leq \frac{1}{|U_j|}  \sum_{i=1}^{N(\epsilon)} \sum_{u \in U_j} \left| \frac{1}{|\Lambda_j^\epsilon|} \sum_{\lambda\in\Lambda_j^\epsilon} \one_{C_i^{\lambda}(U_j)} (u)  -  \frac{\eta_i(\varepsilon)}{|T_i^\epsilon|} \right|   \left\|{F}(T_i^\epsilon u) \right\|.
\end{align*}
Again, we spit the sum over $U_j$ into two parts: one where we are able to apply property (II) and a ``small'' one. Besides this we use the boundedness of $F$ and $\beta N(\epsilon)\leq \epsilon$ to end up with
\begin{align}
 D_2(j,\epsilon) &\leq
\frac{C}{|U_j|}  \sum_{i=1}^{N(\epsilon)}|T_i^\epsilon|\left( \!\sum_{\ato{u \in U_j}{u\notin \partial_Q(U_j)}} \left|  \sum_{\lambda\in\Lambda_j^\epsilon}\! \frac{\one_{C_i^{\lambda}(U_j)} (u)}{|\Lambda_j^\epsilon|}   -  \frac{\eta_i(\varepsilon)}{|T_i^\epsilon|} \right| + \!\!\! \sum_{u \in \partial_Q(U_j)} \left| \sum_{\lambda\in\Lambda_j^\epsilon} \! \frac{\one_{C_i^{\lambda}(U_j)} (u)}{|\Lambda_j^\epsilon|}  -  \frac{\eta_i(\varepsilon)}{|T_i^\epsilon|} \right| \right) \nonumber\\
& \leq
\frac{C}{|U_j|}  \sum_{i=1}^{N(\epsilon)}|T_i^\epsilon|\left(  |U_j|\biggl(\frac{3\beta}{|T_i^\epsilon|} +\epsilon \gamma_i^{\epsilon,j}\biggr) + | \partial_Q(U_j)| \right) \nonumber\\
& \leq
C  \sum_{i=1}^{N(\epsilon)}  \bigl(3\beta +\epsilon  \gamma_i^{\epsilon,j}|T_i^\epsilon|\bigr) + \frac{C| \partial_Q(U_j)|}{|U_j|}  \sum_{i=1}^{N(\epsilon)}|T_i^\epsilon|  
\leq 5\epsilon C +\frac{C| \partial_Q(U_j)|}{|U_j|}  \sum_{i=1}^{N(\epsilon)}|T_i^\epsilon|  \label{eqn:ESTD2}
\end{align}

To finish the proof of (iii), we combine the inequalities (\ref{eqn:ESTD1}), (\ref{eqn:ESTD2}) and (\ref{eqn:ESTD3}) and finally arrive at
\begin{align*} 
\Delta(j, \varepsilon) 
&\leq D_1(j, \varepsilon) + D_2(j, \varepsilon) + D_3(j, \varepsilon)  \\
&\leq (12C+33D) \epsilon  +     C \, \sum_{i=1}^{N(\epsilon)} \eta_i(\varepsilon) \sum_{P \in \cP(T_i^\epsilon)} \left| \frac{\#_P(\cC_{|U_j})}{|U_j|} - \nu_P \right| \\
&\quad + 4 \sum_{i=1}^{N(\epsilon)}  \eta_i(\varepsilon) \frac{b(T_i^\epsilon )}{|T_i^\epsilon|}+(C+4 D) \frac{|\partial_Q(U_j)|}{|U_j|} \sum_{i=1}^{N(\epsilon)} |T_i^\epsilon|
\end{align*}
for all $j \geq j_0(\varepsilon, \beta, T_i^{\varepsilon})$. Since $0 < \varepsilon < 1/10$ (and therefore also $\beta$) was arbitrarily chosen, this shows the desired estimate (\ref{eqn:EST}) for $\Delta(j,\varepsilon)$, $j \geq j_0(\varepsilon, \beta, T_i^{\varepsilon})$.


Furthermore, 
Lemma \ref{lemma:ele4} yields with the choice of the $T_i^{\varepsilon}$, as well as with the monotonicity assumption on the sequence $b(S_{n_k})/|S_{n_k}|$ (see above) that 
\[
\lim_{\epsilon\searrow 0}\sum_{i=1}^{N(\epsilon)}  \eta_i(\varepsilon) |T_i^\epsilon|^{-1} b(T_i^\epsilon ) \leq \lim_{\epsilon \rightarrow 0} \sum_{i=1}^{N(\epsilon)}  \eta_i(\varepsilon) |S_{n_i}|^{-1} b(S_{n_i})  =0. 
\]
This and the fact that the frequencies $\nu_P$ along $(U_j)_j$ exist shows with (\ref{eqn:EST}) that
\begin{eqnarray} \label{eqn:lim}
\lim_{\varepsilon \rightarrow 0} \, \lim_{j \rightarrow \infty} \Delta(j,\varepsilon) = 0.
\end{eqnarray}
Now the triangle inequality shows that
\begin{eqnarray*}
\left\| \frac{F(U_j)}{|U_j|} - \frac{F(U_m)}{|U_m|} \right\| &\leq& \Delta(j, \varepsilon) + \Delta(m, \varepsilon)
\end{eqnarray*}
for all $0 < \varepsilon < 1/10$ and every $j \geq j_0(\varepsilon)$. 
By (\ref{eqn:lim}) $(|U_j|^{-1}F(U_j))_{j \in \NN}$ must be a Cauchy sequence and hence converges in the Banach space $X$ to some element $\overline{F}$. 
The limit in (\ref{eqn:lim}) also shows
\begin{equation*}
\overline F = \lim_{\epsilon\searrow 0}\sum_{i=1}^{N(\varepsilon)} \varepsilon(1-\varepsilon)^{N(\varepsilon)-i} \sum_{P \in \cP(T_i)} \nu_P \, \frac{\tilde{F}(P)}{|T_i|}. \qedhere
\end{equation*}
\end{proof}	

We can use Inequality \eqref{eqn:EST} to obtain explicit bounds on the speed of convergence. This will be shown in the next corollary.
\begin{Corollary}\label{cor:ET}
 In the situation of Theorem \ref{thm:ET}, the following estimates hold true:
\begin{align*}
\left\| \overline F -\frac{F(U_j)}{|U_j|}\right\|
&\leq
(24C+66D) \epsilon  +     C \, \sum_{i=1}^{N(\epsilon)} \eta_i(\varepsilon) \sum_{P \in \cP(T_i^\epsilon)} \left| \frac{\#_P(\cC_{|U_j})}{|U_j|} - \nu_P \right| \\
&\quad +8 \sum_{i=1}^{N(\epsilon)}  \eta_i(\varepsilon) \frac{b(T_i^\epsilon )}{|T_i^\epsilon|}
+ (C+4 D) \frac{|\partial_Q(U_j)|}{|U_j|} \sum_{i=1}^{N(\epsilon)} |T_i^\epsilon|
\end{align*}
for $j \geq j_0(\varepsilon)$ and
\[
 \left\| \overline F -\sum_{i=1}^{N(\epsilon)} \eta_i(\varepsilon) \sum_{P \in \cP(T_i^\epsilon)} \nu_P \, \frac{\tilde{F}(P)}{|T_i^\epsilon|}\right\|
\leq
(12C+33D) \epsilon  + 4 \sum_{i=1}^{N(\epsilon)}  \eta_i(\varepsilon) \frac{b(T_i^\epsilon )}{|T_i^\epsilon|}.
\]
\end{Corollary}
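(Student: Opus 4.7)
The plan is to derive both inequalities directly from the estimate (\ref{eqn:EST}) established in Theorem \ref{thm:ET}, using essentially only the triangle inequality and a passage to the limit $j\to\infty$. Let me abbreviate the semi-explicit sum by
\[
S(\varepsilon) := \sum_{i=1}^{N(\epsilon)} \eta_i(\varepsilon) \sum_{P \in \cP(T_i^\epsilon)} \nu_P \, \frac{\tilde{F}(P)}{|T_i^\epsilon|},
\]
so that $\Delta(j,\varepsilon) = \|F(U_j)/|U_j| - S(\varepsilon)\|$ and, by part (i) of Theorem \ref{thm:ET}, $F(U_j)/|U_j| \to \overline F$ in $X$.

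For the second estimate, I would fix $\varepsilon \in (0, 1/10)$ and let $j \to \infty$ in inequality (\ref{eqn:EST}). By continuity of the norm, the left-hand side $\Delta(j,\varepsilon)$ converges to $\|\overline F - S(\varepsilon)\|$. On the right-hand side, the term $C\sum_i \eta_i(\varepsilon) \sum_{P}|\#_P(\cC_{|U_j})/|U_j| - \nu_P|$ vanishes in the limit, since by Assumption \ref{ass:first} the frequencies $\nu_P$ exist along $(U_j)$, and for each fixed $\varepsilon$ the outer sums are finite (there are $N(\varepsilon)$ values of $i$, each $\cP(T_i^\varepsilon)$ is a finite set because $T_i^\varepsilon$ is finite and $\mathcal{A}$ is finite). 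Likewise, the boundary term $(C+4D)|\partial_Q(U_j)|/|U_j|\cdot \sum_i|T_i^\varepsilon|$ vanishes because $(U_j)$ is a F\o lner sequence and $Q$ is a fixed compact set depending only on $\varepsilon$ and the basis sets. What remains in the limit is exactly $(12C+33D)\varepsilon + 4\sum_{i=1}^{N(\varepsilon)}\eta_i(\varepsilon)\, b(T_i^\varepsilon)/|T_i^\varepsilon|$, which is the second estimate.

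For the first estimate, I would simply apply the triangle inequality
\[
\left\| \overline F - \frac{F(U_j)}{|U_j|}\right\|
\leq \left\| \overline F - S(\varepsilon)\right\| + \Delta(j,\varepsilon),
\]
then insert the bound on $\|\overline F - S(\varepsilon)\|$ just obtained together with the bound (\ref{eqn:EST}) on $\Delta(j,\varepsilon)$. The two $(12C+33D)\varepsilon$ contributions add up to $(24C+66D)\varepsilon$, the two boundary-term contributions of type $4\sum_i \eta_i(\varepsilon)\,b(T_i^\varepsilon)/|T_i^\varepsilon|$ combine to the factor $8$, and the remaining terms (pattern-frequency error and $|\partial_Q(U_j)|/|U_j|$ term) appear unchanged as they come only from $\Delta(j,\varepsilon)$.

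There is no substantial obstacle here: the proof is essentially bookkeeping. The only point worth stating carefully is the justification that the pattern-frequency and boundary-of-$U_j$ terms in (\ref{eqn:EST}) actually vanish as $j\to\infty$ for fixed $\varepsilon$; this relies on the finiteness of $\cP(T_i^\varepsilon)$ for each $i$ (so one may interchange the limit with the finite sums) together with the F\o lner and frequency assumptions. Once this is noted, both inequalities follow immediately.
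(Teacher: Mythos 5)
Your proof is correct and follows essentially the same route as the paper: both arguments use the triangle inequality with the semi-explicit sum $S(\varepsilon)$ as the intermediate term (the paper writes this via $\lim_{k\to\infty}[\Delta(k,\varepsilon)+\Delta(j,\varepsilon)]$, which is the same estimate), and both pass to the limit in inequality~(\ref{eqn:EST}) using the finiteness of $\cP(T_i^\varepsilon)$ and the F\o lner/frequency assumptions to kill the $j$-dependent terms. The only cosmetic difference is the order in which you treat the two estimates.
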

\begin{proof}
Fix $0 \leq \varepsilon < 1/10$. For the first estimate we get, using the definition of $\Delta(\cdot,\cdot)$, as well as the triangle inequality
\[
 \left\| \overline F -\frac{F(U_j)}{|U_j|}\right\|
=\lim_{k\to\infty}\left\| \frac{F(U_k)}{|U_k|} -\frac{F(U_j)}{|U_j|}\right\|
\leq \lim_{k\to\infty} \left[\Delta(k,\epsilon) +  \Delta(j,\epsilon) \right].
\]
Now the Estimate \eqref{eqn:EST} yields the desired bound. To verify the second bound we write
\[
 \left\| \overline F -\sum_{i=1}^{N(\epsilon)} \eta_i(\varepsilon) \sum_{P \in \cP(T_i^\epsilon)}\!\!\! \nu_P \, \frac{\tilde{F}(P)}{|T_i^\epsilon|}\right\|
=
\lim_{k\to\infty} \left\| \frac{F(U_k)}{|U_k|} -\sum_{i=1}^{N(\epsilon)} \eta_i(\varepsilon) \sum_{P \in \cP(T_i^\epsilon)}\!\!\! \nu_P \, \frac{\tilde{F}(P)}{|T_i^\epsilon|}\right\|
=
\lim_{k\to\infty} \Delta(k,\epsilon)
\]
and again, the claim follows immediately from \eqref{eqn:EST}.		
\end{proof}

\section{Sufficient conditions for the existence of frequencies}


In this section we use the Lindenstrauss pointwise ergodic theorem, 
cf.\@ \cite{Lindenstrauss-01} to prove the existence of frequencies in 
a randomly colored Cayley graph along a tempered F\o{}lner sequence. 
This is motivated by the Banach space-valued ergodic theorem in the previous section, 
as the existence of the frequencies are a basic assumption for its validity. 
In fact, referring to classic ergodic theory, we justify the labeling 'ergodic theorem' for our main statement. 
Precisely, the dynamics in Theorem~\ref{thm:ET} is given by translations of colored patterns in the group. 
In this context, the existence of the pattern frequencies can heuristically be 
interpreted as an assumption which guarantees the existence of some ergodic measure 
in the pattern dynamical system. 
We show in Theorem~\ref{theorem:freq} that for random colorings, 
this latter condition can be deduced from the ergodicity of measure preserving group actions 
on the corresponding probability space.          

We consider a countable, amenable group $G$, 
as well as a probability space $(\Omega,\cS,\mu)$. 
Let $\tau:G\times \Omega\to \Omega$, $(g,\omega)\mapsto \tau_g(\omega)$ be a measurable action of $G$ on $\Omega$. 
We say that $\tau$ is \emph{measure preserving}, if for any $A\in\cS$ and $g\in G$ one has $\mu(A)=\mu(\tau_g(A))$.
Furthermore, a measure preserving action is said to be \emph{ergodic} if $\mu(A)\in\{0,1\}$, whenever $A\in\cS$ with $A=\tau_g^{-1}(A)$ for all $g\in G$.

In this situation, the Lindenstrauss ergodic theorem then reads as follows.
\begin{Theorem}[Lindenstrauss' pointwise ergodic theorem] \label{thm:linde}
 Let $G$ be a countable amenable group and let $\tau$ be a measure preserving and ergodic action of $G$ on the probability space $(\Omega,\cS,\mu)$. 
Furthermore let $(Q_j)$ be a tempered F\o{}lner sequence and $f\in L^1(\mu)$. 
Then one has for $\mu$-almost all $\omega$
\[
 \lim_{j\to\infty}\frac{1}{|Q_j|}\sum_{g\in Q_j}f(\tau_g \omega ) =\int\limits_{\Omega} f(\omega) d \mu(\omega).
\]
\end{Theorem}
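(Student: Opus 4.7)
The statement is the classical pointwise ergodic theorem of Lindenstrauss, so the plan is really to recall the architecture of its proof rather than to invent a new argument. The broad strategy follows the standard three-step Banach-principle paradigm: establish a weak-type $(1,1)$ maximal inequality for the averages $A_j f(\omega) := |Q_j|^{-1} \sum_{g \in Q_j} f(\tau_g \omega)$, verify almost-everywhere convergence on a dense subclass of $L^1(\mu)$, and conclude a.e.~convergence on all of $L^1(\mu)$. Ergodicity then forces the $\tau$-invariant limit function to be constant, and integration identifies the constant as $\int f \, d\mu$.

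The dense subclass is easy: for $f$ in the linear span of indicators of the form $\mathbf{1}_A - \mathbf{1}_{\tau_h^{-1}A}$ together with the constants, the averages telescope up to a boundary contribution that vanishes by the Følner property, so the theorem holds trivially on this dense subset of $L^1(\mu)$ by the mean ergodic theorem (von Neumann) combined with a direct computation. The nontrivial obstruction is therefore the maximal inequality, and this is where temperedness of $(Q_j)$ is essential.

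First I would formulate and prove the Lindenstrauss covering lemma for tempered sequences. Concretely, given a finite collection of translates $\{Q_{j(\omega)}\omega\}$ one wants to extract a subcollection that is pairwise essentially disjoint and yet covers a definite proportion of the original union. The temperedness bound $|\bigcup_{k<n} Q_k^{-1} Q_n| \le c|Q_n|$ furnishes exactly the combinatorial input needed for a greedy selection argument (process the scales from largest to smallest, keeping only those balls whose accumulated image under earlier translates is not too large). This is the main obstacle in the whole proof: without temperedness, covering arguments on amenable groups genuinely fail, and the standard Vitali/Besicovitch machinery has no analogue. I would spend the bulk of the effort on this lemma, making the probabilistic/greedy construction quantitative.

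Once the covering lemma is available, transference to the dynamical setting is routine. Given $\lambda > 0$ and $f \in L^1(\mu)$, the level set $E_\lambda := \{\omega : \sup_j A_j |f|(\omega) > \lambda\}$ is covered, pointwise in $\omega$, by translates $Q_{j(\omega)}^{-1}$ of group balls on which the average of $|f|$ exceeds $\lambda$. Applying the covering lemma fibrewise (or, in a transference style, on each orbit approximated by a large Følner set via a Calder\'on-type transfer) yields disjointified pieces whose contribution to $\int |f| \, d\mu$ dominates $\lambda \cdot \mu(E_\lambda)$ up to the constant from temperedness, giving the weak-type $(1,1)$ bound. The Banach principle then upgrades a.e.~convergence on the dense subclass to a.e.~convergence on all of $L^1(\mu)$. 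Finally, the limit is $\tau$-invariant (by the Følner property applied to $gQ_j \triangle Q_j$) and hence constant by ergodicity; integrating both sides against $\mu$ and invoking dominated convergence (justified by the maximal inequality) identifies the constant as $\int f \, d\mu$, completing the proof.
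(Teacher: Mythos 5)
The paper does not prove Theorem~\ref{thm:linde} at all: it is imported as a known result with a citation to \cite{Lindenstrauss-01} (and the surrounding discussion cites \cite{Weiss-03} as well), so there is no ``paper's own proof'' to compare against. The authors merely remark that the known proofs hinge on a maximal inequality obtained through a covering argument for tempered F{\o}lner sequences, which is precisely the architecture you sketch. Your outline --- dense subclass of coboundaries plus constants where convergence is immediate, a Vitali-type covering lemma for tempered sequences proved by a greedy selection exploiting the bound $|\bigcup_{k<n}Q_k^{-1}Q_n|\le c|Q_n|$, a transference step yielding the weak-type $(1,1)$ maximal inequality, the Banach principle to extend convergence to all of $L^1(\mu)$, and finally invariance plus ergodicity to identify the limit as $\int f\,d\mu$ --- is an accurate and reasonably complete summary of Lindenstrauss' argument, so there is no gap to flag. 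Given that the paper simply cites the result, the only feedback is that a full proof would of course require the covering lemma to be stated precisely and proved in detail; as a blind reconstruction of the cited literature your sketch is on target.
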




All proofs of the above theorem are based on the verification of a 
so-called maximal inequality via abstract, 
geometric covering arguments for very invariant subsets of the group, 
see e.g.\@ \cite{Lindenstrauss-01} and \cite{Weiss-03}. 
Hence, one might wonder if one can use the $\varepsilon$-quasi tiling arguments 
of Section \ref{sec:OW2} to give a direct proof for a pointwise ergodic theorem. 
However, as one can infer from the literature, the assumptions on the 
corresponding decompositions can be kept much milder than in the situation of 
Definition \ref{defi:STP} or even of Definition \ref{defi:USTP}. 
As a consequence, by giving effective estimates for the $\varepsilon$-quasi tilings, 
we have to pay the price of strong invariance conditions. 
More precisely, for decreasing $\epsilon$, the tiles used in this $\epsilon$-quasi tiling 
are taken from a F\o{}lner (sub-)sequence consisting of increasingly invariant sets, 
such that the notion of temperedness is too mild to describe this condition.
In that sense, our decomposition results should be considered 
as complementary to the classical results: 
we obtain more information on the shape, as well as on the degree of uniformity 
of the $\varepsilon$-quasi tilings, but we lose the properties of a tempered 
F{\o}lner sequence which lead to a maximal inequality. 


In the following, we use the above ergodic theorem to show that frequencies of patterns exist almost surely in an appropriate probability space.
We consider a countable amenable group $G$ and a finite set $\cA$, which we will interpret as the set of colors. 
The probability space $(\Omega,\cS,\PP)$ is given in the following way.
The sample space is the set 
\[
\Omega=\cA^G=\{\omega=(\omega_g)_{g\in G}\mid \omega_g \in\cA \text{ for all }g\in G \}.
\]
The sigma-algebra $\cS$ is generated by the cylinder sets  
and $\PP$ is a probability measure on $(\Omega,\cS)$. 
Setting for each $\omega\in\Omega$
\[
 \cC_\omega:G\to \cA, \quad g\mapsto \omega_g,
\]
shows that each $\omega$ can be interpreted as a coloring of $G$.
Let $\tau:G\times \Omega \to \Omega$ be given by
\begin{equation}\label{def:tau}
(g,\omega) \mapsto \tau_g\omega = \omega g^{-1}, 
\end{equation}
where $\omega g^{-1}\in \Omega$ is the element satisfying
\[
(\omega g^{-1})_x = \omega_{xg} \quad \quad (x\in G).
\]
We assume that the action $\tau$ of $G$ on $\Omega$ is measure preserving and ergodic.

%

Using Theorem \ref{thm:linde} we can prove the existence of the frequencies $\nu_P$ along any tempered F\o{}lner sequence $(Q_j)$. 
This has been shown in similar situations for example in \cite{LenzSV-10,Schwarzenberger-12,PogorzelskiSS-11}.

\begin{Theorem}\label{theorem:freq}
Let the probability space $(\Omega, \cS,\PP)$ be given and let the action $\tau$ of $G$ on $\Omega$ be measure preserving and ergodic. 
Furthermore let $(Q_j)$ be a tempered F\o{}lner sequence. 
Then there exists a set $\tilde\Omega$ of full measure such that the limit
\[
\lim_{n\to\infty}\frac{\sharp_{P}\left((\cC_\omega)_{|Q_j}\right)}{|Q_j|}
\]
exists for all $P\in\cP$ and all $\omega\in\Omega$ and the limit is independent of the specific choice of $\omega$.
\end{Theorem}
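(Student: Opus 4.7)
The plan is to deduce the statement from Lindenstrauss' pointwise ergodic theorem (Theorem~\ref{thm:linde}) applied to a natural $L^1$ observable attached to each pattern, then take a countable intersection over $\cP$.

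For a pattern $P \in \cP$ with domain $D(P) \in \cF(G)$, I would introduce the cylinder event
\[
A_P := \{\omega \in \Omega \mid \omega_g = P(g) \text{ for all } g \in D(P)\}
\]
together with its indicator $f_P := \one_{A_P} \in L^1(\Omega,\cS,\PP)$. Using the explicit formula \eqref{def:tau} for $\tau$, a direct unwinding gives $(\tau_x\omega)_g = \omega_{gx}$, so $f_P(\tau_x\omega)=1$ exactly when $\omega_{gx}=P(g)$ for every $g \in D(P)$, which is equivalent to $\cC_\omega|_{D(P)x} = Px$. Hence, for every $j$ and every $\omega$,
\[
\sum_{x \in Q_j} f_P(\tau_x\omega) \;=\; \bigl|\{x \in Q_j \mid \cC_\omega|_{D(P)x} = Px\}\bigr|.
\]
This quantity differs from $\sharp_P(\cC_\omega|_{Q_j})$ only through $x \in Q_j$ with $D(P)x \not\subseteq Q_j$ and through $x \notin Q_j$ with $D(P)x \subseteq Q_j$. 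Setting $K := D(P) \cup \{\id\}$, both exceptional sets are easily seen to lie inside the $K$-boundary $\partial_K(Q_j)$ of Definition~\ref{defi:KBD}, so
\[
\Bigl|\sum_{x \in Q_j} f_P(\tau_x\omega) - \sharp_P(\cC_\omega|_{Q_j})\Bigr| \;\leq\; 2\,|\partial_K(Q_j)|.
\]

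Since $(Q_j)$ is a tempered Følner sequence, Theorem~\ref{thm:linde} applied to $f_P$ yields a full-measure set $\Omega_P \subseteq \Omega$ on which $|Q_j|^{-1}\sum_{x \in Q_j} f_P(\tau_x\omega) \to \int f_P\,d\PP = \PP(A_P)$. The Følner property forces $|\partial_K(Q_j)|/|Q_j| \to 0$ (note that $K$ is fixed, independent of $j$), so dividing the comparison above by $|Q_j|$ yields
\[
\lim_{j \to \infty} \frac{\sharp_P(\cC_\omega|_{Q_j})}{|Q_j|} \,=\, \PP(A_P) \qquad (\omega \in \Omega_P),
\]
a limit manifestly independent of $\omega$. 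Because $G$ is countable and $\cA$ finite, $\cP$ is countable, so the full-measure set $\tilde\Omega := \bigcap_{P \in \cP}\Omega_P$ supports the desired conclusion simultaneously for every pattern.

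The argument is essentially a translation between combinatorial pattern counting and ergodic averages along shift orbits, with no real obstacle beyond the routine boundary bookkeeping in the middle step. The one point that genuinely deserves care is the verification that the controlling set $K$ can be chosen independently of $j$, so that the Følner property produces an $o(|Q_j|)$ defect; this is what ultimately lets the almost-sure convergence of the ergodic averages be transferred to the pattern frequencies.
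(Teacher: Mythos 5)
Your proof is correct and follows essentially the same route as the paper: both define the cylinder indicator $f_P=\one_{A_P}$, relate $\sum_{x\in Q_j}f_P(\tau_x\omega)$ to $\sharp_P(\cC_\omega|_{Q_j})$ up to a boundary error controlled by the F\o lner property, invoke Lindenstrauss' theorem for each $P$, and intersect over the countable family $\cP$. The only cosmetic difference is that you bound the defect by $\partial_{D(P)\cup\{\id\}}(Q_j)$ directly, whereas the paper first reduces WLOG to $\id\in D(P)$ and then uses $\partial_{D(P)}(Q_j)$; both are sound. (You also correctly write $\tilde\Omega$ as an intersection; the paper's $\bigcup$ there is a typo.)
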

\begin{proof}
Let $P:D(P)\to \cA$ be some pattern. 
As the number of occurrences of two equivalent patterns $P_1$ and $P_2$ in another pattern $P_3$ is the same, 
we can assume without loss of generality that $\id\in D(P)$. 
Set $A_P:=\{\omega\in\Omega\mid (\cC_\omega)_{|D(P)}=P\}$ 
and let $f_P:\Omega\to\{0,1\}$ be the indicator function of $A_P$. 
Now we can estimate the number of occurrences of $P$ in $(\cC_\omega)_{|Q_j}$ by
\begin{equation}\label{eq:freq1}
 \sum_{g\in Q_j } f_P(\omega g^{-1}) - |\partial_{D(P)}Q_j|
\leq
 \sum_{g\in Q_j \setminus (\partial_{D(P)}Q_j)} f_P(\omega g^{-1})
\leq 
 \sharp_P\left((\cC_\omega)_{|Q_j}\right)
\leq
\sum_{g\in Q_j } f_P(\omega g^{-1})
\end{equation}
We apply Theorem \ref{thm:linde}, which is possible as $\tau$ acts measure preserving and ergodic and since $f_P\in L^1(\PP)$. 
This yields that there is a set $\Omega_P$ of full measure such that
\[
\lim_{j\to\infty}\frac{1}{|Q_j|}\sum_{g\in Q_j } f_P(\omega g^{-1})= \EE(f_P)
\]
holds for all $\omega\in\Omega_P$. Using this with \eqref{eq:freq1} and the fact that $(Q_j)$ is a F\o{}lner sequence we obtain
\[
\lim_{j\to\infty}\frac{\sharp_P\left((\cC_\omega)_{|Q_j}\right)}{|Q_j|} = \EE(f_P)
\] 
for all $\omega\in\Omega_P$. 
Next, set $\tilde\Omega=\bigcup_{P\in\cP}\Omega_P$ and use the fact that $\cP$ is countable to get the desired set $\tilde\Omega$ 
of full measure such that the frequencies along $(Q_j)$ exist for all patterns $P\in\cP$ and all $\omega\in\tilde\Omega$. 
The independence of the specific choice of $\omega$
is clear as $\EE(f_P)$ is independent of $\omega$. 
\end{proof}

\begin{Remark}
 In the case where the measure $\PP$ has a product structure $\PP =\Pi_{g\in G} \mu$ 
and $\mu$ is some measure on $\cA$, it is easy to show that $\tau$, 
defined as in (\ref{def:tau}) is measure preserving and ergodic. 
This shows that Theorem \ref{theorem:freq} applies in particular to i.i.d. models.
\end{Remark}



\section{Integrated density of states}

In this section, we are interested in the approximation of the integrated density of states via finite volume analogues. For periodic, finite hopping range operators, recent results show that uniform convergence of the integrated density of states can be obtained in the very general geometric situation of Benjamini-Schramm convergent, hyperfinite graph sequences, cf.\@ \cite{Elek-06pre, Elek-08, Pogorzelski-13}.   
Considering groups with a coloring, we are able to cover non-periodic operators. The corresponding coefficients take their values according to the colors of geometric patterns. In this context, we show that for each finitely generated, amenable group, the approximants converge uniformly in the energy variable to a certain limit function. This model has been investigated by {\sc Lenz, Schwarzenberger, Veseli\'c} in their work \cite{LenzSV-10} in a far more restricted framework. In the following, we will show briefly how the Ergodic Theorem \ref{thm:ET} implies uniform convergence of the approximating functions. For further details, we refer to \cite{LenzSV-10}. We will also stick close to the notation in \cite{LenzSV-10}.

Let $\cH$ be a finite dimensional Hilbert space with norm $\Vert\cdot\Vert$ and denote by $\ell^2(G,\cH)$ the Hilbert space of the functions $u:G\to \cH$ such that $\sum_{g\in G} \| u(g)\|^2 <\infty$ for the usual $\ell^2$-norm. For given $Q\in \cF(G)$ and an operator $H:\ell^2(G,\cH)\to\ell^2(G,\cH)$ set
\[
 H[Q]:=p_Q H i_Q : \ell^2(G,\cH) \to \ell^2(G,\cH),
\]
where $i_Q: \ell^2(Q,\cH)\to\ell^2(G,\cH)$ and $p_Q: \ell^2(G,\cH)\to\ell^2(Q,\cH)$ are the canonical inclusions and projections given by
\[ 
i_Q(v)(x):=\begin{cases} u(x) &\text{if $x\in Q$}\\ 0& \mbox{else} \end{cases}\, , 
\quad \quad \quad
p_Q(u)(x):=u(x)\quad \text{for all $x\in Q$}
\]
and $\ell^2(Q,\cH)$ is the space of all functions $v:Q\to\cH$. Furthermore we set $H(x,y):=p_{\{x\}}Hi_{\{y\}}$ for all $x,y\in G$.
\begin{Definition}
 Let $G$ be a finitely generated amenable group, $d:G\times G\to \NN_0$ the induced word metric, $\cA$ some finite set, $\cC:G\to\cA$ a coloring and assume that $H:\ell^2(G,\cH)\to\ell^2(G,\cH)$ is self-adjoint. Then we say $H$ is of \emph{finite hopping range} if there is some $M\in\NN$ such that $d(g,h)\geq M$ implies $H(g,h)=0$. Furthermore $H$ is called \emph{$\cC$-invariant} if there is some $N\in\NN$ such that whenever
\[
 \big(  {\mathcal C}\vert_{B_{N}(g) \cup B_{N}(h)}\big)x = {\mathcal C}\vert_{B_{N}(gx) \cup B_{N}(hx)}
\]
for some $x,g,h\in G$, then we have $H(g,h)=H(gx,hx)$.
If both conditions are fulfilled, then we call $R=\max\{M,N\}$ {\em the overall range} of $H$.
\end{Definition}

\begin{Definition}
Let ${\mathcal B}(\mathbb R)$ denote the Banach space of right-continuous, bounded functions $f:\RR\rightarrow \RR$, equipped with the supremum norm. Given a self-adjoint operator $A$ on a finite dimensional Hilbert space $V$, we define its cumulative eigenvalue counting function $n(A)\in {\mathcal B}(\mathbb R)$ by setting
\[ n(A)(E):= \vert \{ i\in \mathbb N \ \vert\ \lambda_i\leq E \} \vert\]
for all $E \in \mathbb R$, where $\lambda_i, i=1,\dots,\dim V$ are the eigenvalues of $A$, counted according to their multiplicities.
\end{Definition}

\begin{Assumption}\label{ass:second}
 Assume \ref{ass:first} and additionally that $G$ is finitely generated, $\cH$ is a finite dimensional Hilbert space, $H:\ell^2(G,\cH)\to\ell^2(G,\cH)$ is a self-adjoint, finite hopping range and $\cC$-invariant operator with overall range $R$.
\end{Assumption}

\begin{Proposition}\label{prop:idsaad}
 Assume \ref{ass:second}. The function $F_R^H: {\mathcal F}(G)\rightarrow {\mathcal B}(\mathbb R),\ Q\mapsto F_R^H(Q):= n(H[Q_R])$ is ${\mathcal C}$-invariant and almost-additive with the boundary term $b(Q):=4\vert \partial^RQ \vert \dim ({\mathcal H})$.
\end{Proposition}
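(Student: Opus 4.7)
The plan splits into two independent assertions. For $\cC$-invariance, if $\cC_{|Q}$ and $\cC_{|U}$ are equivalent patterns there is $x \in G$ with $U = Qx$ and $\cC(yx) = \cC(y)$ for all $y \in Q$; since the word metric is right-invariant, one also has $(Q_R)x = U_R$. The right translation $T_x : \ell^2(Q_R, \cH) \to \ell^2(U_R, \cH)$ given by $(T_x u)(gx) := u(g)$ is unitary, and the fact that $B_N(g) \cup B_N(h) \subseteq Q$ whenever $g, h \in Q_R$ (which uses $R \geq N$) together with the $\cC$-invariance of $H$ yields $H(gx, hx) = H(g,h)$ for all such $g,h$. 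Hence $T_x^{-1} H[U_R] T_x = H[Q_R]$; unitarily equivalent self-adjoint operators share the same eigenvalue counting function, so $F_R^H(Q) = F_R^H(U)$.

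For almost-additivity, given a disjoint decomposition $Q = \bigsqcup_{i=1}^k Q_i$, I would compare $H[Q_R]$ with the block-diagonal operator $\widetilde{H} := \bigoplus_{i=1}^k H[(Q_i)_R]$, regarded as an operator on $\ell^2(Q_R, \cH)$ via extension by zero on the complement $Q_R \setminus \bigcup_i (Q_i)_R$. Three elementary geometric observations control the perturbation: $(a)$ $\bigcup_i (Q_i)_R \subseteq Q_R$, and any $g \in Q_R$ outside every $(Q_i)_R$ lies in some $Q_i$ while its $R$-neighborhood leaves $Q_i$, whence $Q_R \setminus \bigcup_i (Q_i)_R \subseteq \bigcup_i \partial^R Q_i$; $(b)$ if $g \in (Q_i)_R$ and $h \in (Q_j)_R$ with $i \neq j$, then $d(g,h) > R \geq M$ and so $H(g,h) = 0$ by the finite hopping range; $(c)$ for $g, h$ in the same $(Q_i)_R$, the entries of $H[Q_R]$ and of $H[(Q_i)_R]$ agree. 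Together these imply that every non-zero entry of $H[Q_R] - \widetilde{H}$ sits on a row or column indexed by $Q_R \setminus \bigcup_i (Q_i)_R$, producing the rank bound
\[
\operatorname{rank}\bigl(H[Q_R] - \widetilde{H}\bigr) \;\leq\; 2\,\dim(\cH)\,\Bigl|Q_R \setminus \bigcup_i (Q_i)_R\Bigr| \;\leq\; 2\,\dim(\cH)\sum_{i=1}^k |\partial^R Q_i|.
\]

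The finishing step is the standard Weyl-type inequality $\sup_{E\in\RR} |n(A)(E) - n(B)(E)| \leq \operatorname{rank}(A-B)$ for self-adjoint operators on a common finite-dimensional Hilbert space, which gives $\|n(H[Q_R]) - n(\widetilde{H})\|_\infty \leq 2\,\dim(\cH)\sum_i |\partial^R Q_i|$. A second comparison handles the zero padding: the extension of $\widetilde{H}$ to $\ell^2(Q_R, \cH)$ creates $\dim(\cH)\,|Q_R \setminus \bigcup_i (Q_i)_R|$ additional zero eigenvalues, so $\|n(\widetilde{H}) - \sum_i n(H[(Q_i)_R])\|_\infty \leq \dim(\cH)\sum_i |\partial^R Q_i|$. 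A triangle inequality then yields
\[
\Bigl\| F_R^H(Q) - \sum_{i=1}^k F_R^H(Q_i) \Bigr\|_\infty \;\leq\; 3\,\dim(\cH)\sum_{i=1}^k |\partial^R Q_i| \;\leq\; \sum_{i=1}^k b(Q_i),
\]
which is the desired almost-additivity (with some room to spare in the constant).

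The principal technical obstacle is the rank estimate, since it rests on the two geometric facts that different blocks decouple thanks to the finite hopping range and that the leftover cells $Q_R \setminus \bigcup_i (Q_i)_R$ are contained in $\bigcup_i \partial^R Q_i$. Once the rank bound is in hand, the remaining work—the unitary equivalence argument for $\cC$-invariance and the Weyl bound on eigenvalue counting functions—is essentially bookkeeping.
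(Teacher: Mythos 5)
Your proof is correct, and since the paper itself merely defers to \cite{LenzSV-10}, Proposition~4.6, without reproducing the argument, there is no in-paper proof to compare against; the rank-perturbation argument you give is the standard one used in that reference and in the lattice setting. A few remarks on the details you handled correctly, and which are the only places one could slip: the containments $\bigcup_i (Q_i)_R \subseteq Q_R$ and $Q_R \setminus \bigcup_i (Q_i)_R \subseteq \bigcup_i \partial^R Q_i$ hold precisely because $Q_R$ denotes the $R$-interior $\{x \in Q : B_R(x) \subseteq Q\} = Q \setminus \partial^R Q$; the off-diagonal decoupling $H(g,h)=0$ for $g\in(Q_i)_R$, $h\in(Q_j)_R$, $i\neq j$ needs $d(g,h)>R\geq M$, which follows from disjointness of the $Q_i$; and the rank bound needs both the row-support and column-support count of $Q_R\setminus\bigcup_i(Q_i)_R$, giving the factor $2$. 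Your final constant $3\dim(\cH)$ sits comfortably below the $4\dim(\cH)$ declared in the boundary term, so the almost-additivity inequality is satisfied with room to spare. The $\cC$-invariance part is also correct: right-invariance of the word metric gives $(Q_R)x = U_R$, and since $R\geq N$ every $g,h\in Q_R$ has $B_N(g)\cup B_N(h)\subseteq Q$, so the translation-compatibility hypothesis in the definition of $\cC$-invariance of $H$ is met, yielding the unitary conjugation $T_x^{-1}H[U_R]T_x = H[Q_R]$.
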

\begin{proof}
 see \cite{LenzSV-10}, Proposition 4.6.
\end{proof}
Note that indeed, this $b:\cF(G)\to [0,\infty)$ is a boundary term since $\lim_{j\to\infty}|\partial^R(Q_j)|/|Q_j|=0$ for any F\o{}lner sequence $(Q_j)$, c.f. \cite{LenzSV-10}. Furthermore $b$ is obviously invariant under translations and we have $b(Q)\leq D|Q|$ with $D=4\dim(\cH)|B_R|$. Also property (iv) of Definition \ref{defi:BT} is true for $b$ as it holds for the function $|\partial^R(\cdot)|$. Moreover, we know from the definition of $F_R^H$ that $F_R^H(Q)\leq C|Q|$, where $C:=\dim(\cH)$.

\begin{Theorem}
 Assume \ref{ass:second}. Then there exists a unique probability measure $\mu_H$ with distribution function $N_H$, such that
\[
 \lim_{j\to\infty}\left\| \frac{n(H[U_{j,R}])}{\dim(\cH)|U_{j}|} - N_H \right\|_\infty = 0
\]
The function $N_H$ is called \emph{integrated density of states}. Furthermore for each $0<\epsilon<1/10$ there is a constant $j_0=j_0(\epsilon)$ such that the estimate
\begin{align*}
 \left\| N_H - \frac{n(H[U_{j,R}])}{\dim(\cH)|U_{j}|} \right\|_\infty   
&\leq  
(24+264|B_R|) \epsilon  +   \sum_{i=1}^{N(\epsilon)} \eta_i(\varepsilon) \sum_{P \in \cP(T_i^\epsilon)} \left| \frac{\#_P(\cC_{|U_j})}{|U_j|} - \nu_P \right|  \\
&\quad +(1+16 |B_R|) \frac{|\partial_Q(U_j)|}{|U_j|}\sum_{i=1}^{N(\epsilon)}|T_i^\epsilon|+  32 \sum_{i=1}^{N(\epsilon)}  \eta_i(\varepsilon) \frac{|\partial^R(T_i^\epsilon)|}{|T_i^\epsilon|}     
\end{align*}
holds for all $j\geq j_0$. As before we have  $N(\epsilon):=\lceil\log(\varepsilon)/\log(1- \varepsilon)\rceil$ and $\eta_i(\varepsilon) := \varepsilon(1-\varepsilon)^{N(\epsilon)-i}$ for $i=1,\dots,N(\epsilon)$. The sets $T_i^\epsilon$, $i=1,\dots,N(\epsilon)$ are given as in Definition \ref{defi:STP} with parameters $\beta=2^{-N(\varepsilon)-1} \varepsilon$ and $\delta_0(\beta)$ and a nested F\o lner sequence $(S_n)$.
\end{Theorem}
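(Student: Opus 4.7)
The plan is to deduce this result directly from Corollary~\ref{cor:ET} applied to the function $F_R^H/\dim(\cH)$, using the almost-additivity and $\cC$-invariance established in Proposition~\ref{prop:idsaad}. The argument has three ingredients: verifying the abstract hypotheses, identifying the limit as a distribution function of a probability measure, and matching constants.

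\textbf{Step 1 (abstract setup).} I would first check that $F_R^H$ fits the framework of Theorem~\ref{thm:ET} with values in the Banach space $\mathcal{B}(\RR)$ under $\|\cdot\|_\infty$. Proposition~\ref{prop:idsaad} supplies $\cC$-invariance and almost-additivity with $b(Q)=4|\partial^R Q|\dim(\cH)$. I would verify that $b$ is indeed a boundary term in the sense of Definition~\ref{defi:BT}: translation invariance is obvious from the translation invariance of $|\partial^R(\cdot)|$; the Følner-ratio condition is recorded in \cite{LenzSV-10}; the sub-additivity properties (iv) follow from the corresponding properties of $|\partial^R(\cdot)|$; and $|\partial^R Q|\leq |B_R|\,|Q|$ gives the uniform bound with $D=4|B_R|\dim(\cH)$. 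Using $\|F_R^H(Q)\|_\infty=\operatorname{rank}(H[Q_R])\leq\dim(\cH)\,|Q|$ yields $C=\dim(\cH)$ in Lemma~\ref{la:epsaad}(i). Theorem~\ref{thm:ET} then provides an element $\overline{F}\in\mathcal{B}(\RR)$ with $\|F_R^H(U_j)/|U_j|-\overline{F}\|_\infty\to 0$, and we set $N_H:=\overline{F}/\dim(\cH)$.

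\textbf{Step 2 (limit is a distribution function).} Each approximant $n(H[U_{j,R}])/(\dim(\cH)|U_j|)$ is right-continuous, non-decreasing and non-negative, and these three properties pass to any uniform limit in $\mathcal{B}(\RR)$, so $N_H$ inherits them. The lower tail is trivial since the operators $H[U_{j,R}]$ are self-adjoint with a uniform norm bound (finite hopping range plus coloring-invariance of matrix entries forces uniformly bounded row sums), so each approximant vanishes on $(-\infty,-K]$ for a fixed $K$ and hence $\lim_{E\to-\infty}N_H(E)=0$. For the upper tail, at any $E\geq K$ the approximant equals $|U_{j,R}|/|U_j|$; the standard Følner-type estimate $||U_{j,R}|/|U_j|-1|\leq |\partial^R U_j|/|U_j|\to 0$ gives $N_H(E)=1$ for all such $E$. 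Therefore $N_H$ is the distribution function of a unique probability measure $\mu_H$.

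\textbf{Step 3 (explicit error bound).} The quantitative estimate is then a direct specialization of the first inequality of Corollary~\ref{cor:ET} to $F_R^H/\dim(\cH)$. With $C=\dim(\cH)$ and $D=4|B_R|\dim(\cH)$, dividing the Corollary's bound by $\dim(\cH)$ turns the constant prefactors into $(24+264|B_R|)$ and $(1+16|B_R|)$, and the term $8\sum_i\eta_i(\varepsilon)\,b(T_i^\epsilon)/|T_i^\epsilon|$ into $32\sum_i\eta_i(\varepsilon)\,|\partial^R T_i^\epsilon|/|T_i^\epsilon|$, which are precisely the coefficients in the claim. The threshold $j_0=j_0(\varepsilon)$ is inherited from the one in Theorem~\ref{thm:ET}.

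\textbf{Main obstacle.} The tiling and convergence heavy lifting is already contained in Theorem~\ref{thm:ET}; essentially nothing new has to be done there. The only genuine verification is Step 2, i.e.\ checking that the abstract Banach-space limit is actually a \emph{probability} distribution function (monotonicity, right-continuity and non-negativity are easy, but the normalization $N_H(+\infty)=1$ requires the Følner-type control $|U_{j,R}|/|U_j|\to 1$ and a uniform spectral bound on the $H[U_{j,R}]$). Once Step 2 is in place, Step 3 is purely a substitution of constants into the already-proved Corollary~\ref{cor:ET}.
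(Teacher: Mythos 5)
Your proposal follows the paper's argument exactly: apply Proposition~\ref{prop:idsaad}, Lemma~\ref{la:epsaad}(i), and Corollary~\ref{cor:ET} to $F_R^H$ with $C=\dim(\cH)$ and $D=4|B_R|\dim(\cH)$, divide by $\dim(\cH)$ to match the constants, and then argue that the uniform limit is a probability distribution function. The only difference is that the paper dismisses the latter point in one sentence, whereas your Step~2 actually supplies the needed uniform spectral bound and the Følner estimate $|U_{j,R}|/|U_j|\to 1$, which is a useful expansion but not a different method.
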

\begin{proof}
 By Proposition \ref{prop:idsaad} we have that $F_R^H: {\mathcal F}(G)\rightarrow {\mathcal B}(\mathbb R),\ Q\mapsto F_R^H(Q):= n(H[Q_R])$ is ${\mathcal C}$-invariant and almost-additive with the boundary term $b(Q):=4\vert \partial^RQ \vert \dim ({\mathcal H})$. Recall that $C=\dim(\cH)$. We apply Theorem \ref{thm:ET} and Corollary \ref{cor:ET} in order to find a function $\tilde N_H\in \cB(\RR)$ such that:
\begin{align*}
 \frac{1}{C}\left\|\tilde N_H -  \frac{F_R^H (U_j)}{|U_j|}  \right\|_\infty
&\leq (24+264|B_R|) \epsilon  +   \sum_{i=1}^{N(\epsilon)} \eta_i(\varepsilon) \sum_{P \in \cP(T_i^\epsilon)} \left| \frac{\#_P(\cC_{|U_j})}{|U_j|} - \nu_P \right| \\
&\quad +(1+16 |B_R|) \frac{|\partial_Q(U_j)|}{|U_j|}\sum_{i=1}^{N(\epsilon)}|T_i^\epsilon|
  +  32 \sum_{i=1}^{N(\epsilon)}  \eta_i(\varepsilon) \frac{|\partial^R(T_i^\epsilon)|}{|T_i^\epsilon|}     
\end{align*}
This proves the claimed estimate with $N_H:=\tilde N_H/C$. As in the proof of Theorem \ref{thm:ET}, we may assume without loss of generality that $|S_n|^{-1}b(S_n)$ converges monotonically to zero, which gives by the choice of the $T_i^\epsilon$ and by Lemma \ref{lemma:ele4} that 
\[
 \lim_{\epsilon\searrow 0} \sum_{i=1}^{N(\epsilon)}  \eta_i(\varepsilon) \frac{b(T_i^\epsilon )}{|T_i^\epsilon|}      = 0,
\]
This shows
\[
  \lim_{j\to\infty}\left\| N_H -  \frac{F_R^H (U_j)}{\dim(\cH)|U_j|}  \right\|_\infty =0.
\]
The proof that the $N_H$ is the distribution function of a probability measure follows easily using the fact that we obtained convergence with respect supremum norm.
\end{proof}

For completeness reasons let us state two other results, concerning properties of the integrated density of states. Both are taken from \cite{LenzSV-10}, where one also finds the proofs, which still hold true in the present setting.

\begin{assumption}\label{ass:third}
  Assume \ref{ass:second} and additionally that the frequencies $\nu_P$ are strictly positive for all patterns $P\in {\mathcal P}$ which occur in ${\mathcal C}$, i.e. for which there exists $g\in G$ with ${\mathcal C}\vert_{D(P)g}= Pg $.
\end{assumption}

\begin{Theorem}
 If we assume \ref{ass:third}, then the spectrum of $H$ equals the topological support of $\mu_H$.
\end{Theorem}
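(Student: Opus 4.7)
The plan is to prove the two inclusions $\sigma(H) \subseteq \mathrm{supp}(\mu_H)$ and $\mathrm{supp}(\mu_H) \subseteq \sigma(H)$ separately. The uniform convergence of $n(H[U_{j,R}])/(\dim(\cH)|U_j|)$ to $N_H$ established above is the bridge between spectral properties of $H$ and properties of $\mu_H$ in both directions.

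For $\sigma(H) \subseteq \mathrm{supp}(\mu_H)$, which is where Assumption \ref{ass:third} is indispensable, I would fix $E \in \sigma(H)$ and $\epsilon > 0$ and aim to show $N_H(E+\epsilon) - N_H(E-\epsilon) > 0$. By the Weyl criterion together with the density of finitely supported vectors in $\ell^2(G,\cH)$, I can select a unit vector $\phi$ with finite support $D \subseteq G$ satisfying $\|(H-E)\phi\| < \epsilon/2$. Set $D^+ := \bigcup_{x\in D} B_R(x)$ and consider the pattern $P := \cC|_{D^+}$; by Assumption \ref{ass:third}, $\nu_P > 0$. For every $g$ such that $Pg$ occurs in $\cC|_{U_j}$ with $D^+ g \subseteq U_j$, the $\cC$-invariance and finite hopping range of $H$ imply that the translate $\phi_g(x) := \phi(xg^{-1})$ is a unit vector supported on $Dg$ with $\|(H[U_{j,R}] - E)\phi_g\| < \epsilon/2$. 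A standard greedy selection yields a subset $C_j$ of occurrence sites whose translates $\{Dg\}_{g \in C_j}$ are pairwise disjoint and of cardinality at least $(\#_P(\cC|_{U_j}) - c\,|\partial^R U_j|)/|DD^{-1}|$, where the boundary correction accounts for sites too close to $\partial U_j$. The vectors $\{\phi_g\}_{g \in C_j}$ are then orthonormal, and the min-max principle gives
\[
n(H[U_{j,R}])(E+\epsilon) - n(H[U_{j,R}])(E-\epsilon) \geq |C_j|.
\]
Dividing by $\dim(\cH)|U_j|$, invoking the F\o lner property and uniform convergence yields $N_H(E+\epsilon)-N_H(E-\epsilon) \geq \nu_P/(\dim(\cH)|DD^{-1}|) > 0$, so $E \in \mathrm{supp}(\mu_H)$.

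For the reverse inclusion $\mathrm{supp}(\mu_H) \subseteq \sigma(H)$, I would take $E \notin \sigma(H)$, pick $\eta > 0$ with $[E-\eta,E+\eta]$ in the resolvent set of $H$, and show that the number of eigenvalues of $H[U_{j,R}]$ in this interval grows only like $|\partial^R U_j|$. For any normalized eigenvector $\psi$ of $H[U_{j,R}]$ with eigenvalue $\lambda \in [E-\eta,E+\eta]$, extending by zero to $G$ gives a vector for which $(H-\lambda)\psi$ is supported in an $R$-neighborhood of $\partial^R U_j$ and is uniformly bounded in norm by $2\|H\|$. Applying the uniformly bounded resolvent $(H-\lambda)^{-1}$, one sees that $\psi$ belongs (up to a small error) to a finite-dimensional space whose dimension is controlled by a constant times $|\partial^R U_j|$, giving the claimed eigenvalue count. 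By the F\o lner property, $|\partial^R U_j|/|U_j| \to 0$, so $N_H(E+\eta)-N_H(E-\eta) = 0$ and $E \notin \mathrm{supp}(\mu_H)$.

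The main obstacle is the first inclusion and in particular the bookkeeping for the greedy selection together with the boundary control: one must restrict to occurrence sites that lie safely inside $U_j$ (so $D^+ g \subseteq U_j$ to ensure $H[U_{j,R}]\phi_g = H\phi_g$ on the support), and simultaneously retain a linear-in-$|U_j|$ count of pairwise disjoint supports. Both issues are handled by the F\o lner property of $(U_j)$ and by the fact that the loss $|DD^{-1}|$ per selection depends only on the fixed set $D$, not on $j$.
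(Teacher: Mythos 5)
Your overall plan is the right one and, as far as one can tell from the reference, it is essentially the argument in \cite{LenzSV-10} to which the paper delegates the proof: positivity of pattern frequencies (Assumption~\ref{ass:third}) together with Weyl approximate eigenvectors gives $\sigma(H)\subseteq\supp(\mu_H)$, and a boundary estimate gives the reverse inclusion. Two technical points deserve tightening. For the first inclusion, the greedy selection should enforce pairwise disjointness of the \emph{enlarged} supports $D^{+}g$ (or at least separation by the hopping range $R$), not merely of the sets $Dg$; only then are the vectors $(H[U_{j,R}]-E)\phi_g$ mutually orthogonal, which is what the spectral counting via $\operatorname{Tr}\Pi_{[E-\epsilon,E+\epsilon]}\ge\sum_g\|\Pi\phi_g\|^2$ actually requires. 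With only orthonormality of the $\phi_g$ one still gets a fixed positive fraction of $|C_j|$ eigenvalues in the window, so the final positivity of $N_H(E+\epsilon)-N_H(E-\epsilon)$ survives either way, but as written the inequality $n(H[U_{j,R}])(E+\epsilon)-n(H[U_{j,R}])(E-\epsilon)\ge|C_j|$ is not justified. For the second inclusion, the phrase ``$\psi$ belongs (up to a small error) to a finite-dimensional space'' is not quite the argument one wants, since the resolvent $(H-\lambda)^{-1}$ varies with $\lambda$; the clean version is: if $m>\dim(\cH)\,|\Gamma|$ orthonormal eigenvectors $\psi_i$ of $H[U_{j,R}]$ had eigenvalues $\lambda_i\in(E-\eta,E+\eta]$, then the boundary-supported vectors $\varphi_i:=(H-\lambda_i)\psi_i\in\ell^2(\Gamma,\cH)$ would be linearly dependent, and the corresponding nontrivial combination $\psi=\sum c_i\psi_i$ would satisfy $\|(H-E)\psi\|^2=\sum|c_i|^2|\lambda_i-E|^2\le\eta^2\|\psi\|^2$, contradicting $\operatorname{dist}(E,\sigma(H))>\eta$. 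With these two fixes your proposal is a complete proof, and both directions rest on the same pillars as the cited argument.
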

\begin{Corollary}\label{cor_disc}
 Assume \ref{ass:third} and let $E \in \mathbb R$. Then $E$ is a point of discontinuity of $N_H$, if and only if there exists a compactly supported eigenfunction of $H$ corresponding to $E$.
\end{Corollary}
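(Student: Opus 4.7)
The plan is to reduce this equivalence to the now standard ``interior eigenfunction extraction'' argument, relying crucially on the uniform norm convergence $\|n(H[U_{j,R}])/(\dim(\cH)|U_{j}|) - N_H\|_\infty \to 0$ established in the preceding theorem, together with the $\cC$-invariance and finite hopping range of $H$.

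For the forward direction, I would translate the discontinuity into an abundance of finite-volume eigenvectors. Writing $\mu := N_H(E) - N_H(E^-) > 0$, the uniform convergence of distribution functions (equivalently, weak-$\ast$ convergence of the measures together with tightness) forces
\begin{equation*}
\liminf_{j \to \infty} \frac{\dim \ker(H[U_{j,R}] - E)}{\dim(\cH) |U_j|} \geq \mu.
\end{equation*}
Next I would pass to eigenvectors supported in the \emph{deep interior} $U_j^{\circ} := \{g \in U_j \mid B_R(g) \subseteq U_j\}$. Since $|U_j \setminus U_j^{\circ}| \leq |\partial^R(U_j)| = o(|U_j|)$ along the F\o{}lner sequence, the restriction map from $\ker(H[U_{j,R}] - E)$ to $\ell^2(U_j \setminus U_j^{\circ}, \cH)$ has a kernel of dimension at least $\mu \dim(\cH)|U_j| - \dim(\cH)|\partial^R(U_j)|$, which is strictly positive for large $j$. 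Any such $v$ is supported in $U_j^{\circ}$, and because $H$ has range $R$, one checks that $Hv = H[U_{j,R}]v = Ev$ holds in $\ell^2(G,\cH)$. Thus $v$ is a genuine compactly supported eigenfunction of $H$ with eigenvalue $E$.

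For the converse, assume $\phi \neq 0$ satisfies $H\phi = E\phi$ and $\operatorname{supp}(\phi) = K$ is finite. Choose a finite set $D \supseteq B_{R}(K)$ large enough that the pattern $P := \cC|_D$ determines, via $\cC$-invariance, all matrix entries $H(g,h)$ needed to evaluate $H$ on functions supported in $K$. Then, for every $g \in G$ such that $P g$ occurs in $\cC$ (i.e., $\cC|_{Dg} = Pg$), the translate $\phi_g$ (given by $\phi_g(xg) = \phi(x)$) satisfies $H\phi_g = E\phi_g$ and $\operatorname{supp}(\phi_g) = Kg$. By Assumption~\ref{ass:third} we have $\nu_P > 0$, so the number of such $g$ with $Dg \subseteq U_j^{\circ}$ is at least $(\nu_P - o(1))|U_j|$, and each such $\phi_g$ lies in $\ker(H[U_{j,R}] - E)$. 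It remains to extract a linearly independent subsystem of size linear in $|U_j|$; I would do this by a greedy selection that keeps only translates $\phi_{g}$ whose supports $Kg$ are pairwise disjoint, which is possible at a uniform positive density $\geq \nu_P / |KK^{-1}|$ by a standard packing argument. The resulting linearly independent family shows
\begin{equation*}
\frac{\dim \ker(H[U_{j,R}] - E)}{\dim(\cH)|U_j|} \geq \frac{\nu_P}{\dim(\cH)|KK^{-1}|} - o(1),
\end{equation*}
and passing to the limit with the uniform convergence of $n(H[U_{j,R}])/(\dim(\cH)|U_j|)$ forces a jump of at least this size at $E$.

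The only genuinely subtle point is the linear-independence extraction in the converse: one must ensure that positive density of occurrences of $P$ translates into positive density of disjointly supported translated eigenfunctions. Everything else is bookkeeping around the deep-interior argument and the range-$R$ identity $H[U_{j,R}]v = Hv$ for $v$ supported in $U_j^{\circ}$. Since this circle of ideas is worked out in detail in \cite{LenzSV-10} in the more restrictive monotile setting, and since all inputs used here (uniform approximation of $N_H$, $\cC$-invariance of $H$, positivity of $\nu_P$) are now available in full generality via Theorem~\ref{thm:ET}, the argument transfers verbatim.
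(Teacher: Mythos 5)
Your proposal is correct and follows essentially the same route as the paper, which delegates the proof to \cite{LenzSV-10}: uniform convergence of the eigenvalue counting functions identifies the jump of $N_H$ at $E$ with the asymptotic density of $E$-eigenvectors of $H[U_{j,R}]$, a rank argument extracts eigenvectors supported in the $R$-deep interior (which are genuine eigenfunctions of $H$ by finite hopping range), and conversely $\cC$-invariance together with $\nu_P>0$ (Assumption~\ref{ass:third}) propagates one compactly supported eigenfunction to a positive density of disjointly supported translates. The only slip is cosmetic: in the packing step the blocking set for pairwise disjointness of the translates $Kg$ is $K^{-1}K$ rather than $KK^{-1}$, so the density bound should read $\nu_P/|K^{-1}K|$; this does not affect the conclusion.
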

\section{Distribution of percolation clusters}
In this section, we consider a bond percolation model for Cayley graphs given by finitely generated amenable groups. The aim is to study densities and distribution functions of percolation clusters. Parts of these results have been obtained in the special case $G=\ZZ^d$ (with site percolation) in the thesis \cite{Weissbach-11} supervised by {\sc Veseli\'c} and {\sc Schwarzenberger}. Our Theorem \ref{thm:ET} allows us to generalize these ideas to arbitrary, finitely generated amenable groups. 
In this sense, we also generalize certain results of {\sc Grimmett} in \cite{Grimmett-76,Grimmett-99}. 
While in the first subsection we prove the existence of quantities, describing the density of clusters of a fixed size in the percolated graph, the second subsection is devoted to a close look on the dependence of these quantities on the percolation parameter. Note that as we deal with countable groups, the Haar measure $|\cdot|$ of a set is equal to the counting measure.

\subsection{Distribution function and densities}
Let $G$ be an amenable group and suppose that $S\subseteq G$ is a finite and symmetric set of generators. As before, the set of all finite subsets of $G$ is $\cF(G)$.
We denote the undirected graph by $\Gamma=\Gamma(G,S)=(V,E)$, where the vertex set $V$ is equal to the set of elements in $G$ and two vertices
$x$ and $y$ are adjacent if there exists some $s\in S$ such that $sx=y$. We write $e=[x,y]=[y,x]$ for an edge connecting the vertices $x,y\in G$. Note that the induced graph metric on $\Gamma$ corresponds to the word metric defined in Remark \ref{remark:fingen}. In the following, we will also use the notation $\partial^r(\Lambda)$ for the $r$-boundary as introduced in Remark \ref{remark:fingen}.

To obtain a random subgraph we set 
\[
\Omega=\{0,1\}^{V\times S}=\{(\omega_{v,s})_{v\in V, s\in S}\mid \omega_{v,s}\in \{0,1\} \text{ for all }v\in V, s\in S\} 
\]
and consider the $\sigma$-algebra $\cZ$ which is generated by the cylinder sets.
Furthermore, we set $\PP$ to be the product measure $\PP=\prod_{v\in V, s\in S}\nu_{s}$, where $\nu_{s}$ is a Bernoulli measure with values $\{0,1\}$. More precisely, we fix $(p_s)_{s\in S}\in [0,1)^S$ and require $\nu_{s}$ to fulfill $\nu_{s}(X_{v,s}=1)=p_s=1-\nu_{s}(X_{v,s}=0)$ for all $v\in V$ and $s\in S$. Here for each pair $(v,s)\in V\times S$ the mapping $X_{v,s}:\Omega\to\{0,1\}$ is the projection on the coordinate $(v,s)$, i.e. $X_{v,s}(\omega)=\omega_{v,s}$.

In this situation each $\omega=(\omega_{v,s})_{v\in V,s\in S}\in \Omega$ defines a subgraph $\Gamma_\omega=(V,E_\omega)$ of $\Gamma$ by setting $E_\omega=\{[x,y]\in E\mid X_{x,yx^{-1}}=X_{y,xy^{-1}}=1\}$. Therefore we have for given $e=[x,sx]\in E$ with $s\in S\setminus\{\id \}$ that $\PP(e\in E_\omega)=p_s p_{s^{-1}}$ and (if $\id\in S$) we have $\PP([x,x]\in E_\omega))=p_{\id}$. An edge $e\in E$ is called active in the configuration $\omega$ respectively in the graph $\Gamma_\omega$ if $e\in E_\omega$ and non-active otherwise. Note that in the case where there exists a $p\in[0,1)$ such that $p_s^2= p$ for all $s\in S\setminus\{\id \}$ and (if $\id\in S$) $p_{\id}=p$, each edge $e\in E$ will be active with probability $p$.

\begin{Remark}
 Let us briefly explain the reasons for which we choose this probability space. First of all, our aim is to interpret each $\omega\in\Omega$ as a coloring of the graph with finitely many colors in order to apply the ergodic theorem developed in Section 5. Therefore, it is necessary to define $\Omega$ as a product over the set of vertices. Furthermore, we would like to allow for different probabilities determining the existence of the edges. However, the latter should be invariant under translation via multiplication of group elements. Hence it is useful to couple these probabilities with the generators. In the case where $S$ only contains elements which are not self-inverse, this turns out to be easy. Then one could write $S$ as the disjoint union of $\bar S$ and $\bar S^{-1}$ and express the set of all edges as $E=\dot\bigcup_{v\in V,s\in \bar S} [v,sv]$. Hence it is enough to consider the space
\[
 \bar\Omega=\{0,1\}^{V\times \bar S}=\{(\omega_{v,s})_{v\in V, s\in \bar S}\mid \omega_{v,s}\in \{0,1\} \text{ for all }v\in V, s\in \bar S\}, 
\]
along with the measure
\[
 \bar\PP=\prod_{v\in V, s\in \bar S}\bar\nu_{s}\quad\text{where}\quad \bar\nu_s(X_{v,s}=1)=p_s=1-\bar\nu_s(X_{v,s}=0)\text{ and }p\in[0,1)^{\bar S}.
\]
Here again $X_{v,s}(\omega)=\omega_{v,s}$ for all $v\in V$ and $s\in\bar S$. In this case we would set an edge $[v,sv]\in E$ to be active in the configuration $\omega$ if and only if $X_{v,s}(\omega)=1$. This strategy is not possible when $S$ contains a self-inverse element as then one can not decompose $S$ in some set $\bar S$ and its inverse and hence the representation of $E$ as a disjoint union over the set of vertices (as above) is not longer possible. In this case we can write $E=\bigcup_{v\in V,s\in S} [v,sv]$, which is a union that counts each edge (which is not a loop) twice. This leads to the probability space introduced above.
\end{Remark}

Given $e=[x,y]\in E$ and $z\in G$ we set $ez=[xz,yz]\in E$. For $\Lambda\subseteq G$ and $\tilde E\subseteq E$ we define $\tilde E|_\Lambda=\{[x,y]\in \tilde E\mid x,y\in \Lambda\}$.
Two sets $\Lambda,\Lambda'\subseteq G$ are called equivalent in a given configuration $\omega\in\Omega$ (and we write $\Lambda\stackrel{\omega}{\sim}\Lambda')$ if there exists $x\in G$ with $\Lambda x=\Lambda'$ and $\omega_{v,s}=\omega_{vx,s}$ for all $e\in E|_\Lambda$, $s\in S$.
Two vertices $x,y\in G$ are said to be connected in the configuration $\omega$ if there exists an active path in $\Gamma_\omega$ connecting $x$ and $y$, i.e. a sequence of vertices $x=v_0,v_1,\dots,v_n=y\in G$ such that $[v_{i-1},v_i]\in E$ and $X_{v_{i-1},v_iv_{i-1}^{-1}}(\omega)=X_{v_i,v_{i-1}v_i^{-1}}(\omega)=1$ for all $i=1,\dots,n$. In this situation we write $x\stackrel{\omega}{\leftrightarrow} y$. For $x\in G$ denote by
\[
 C_x(\omega)=\{y\in G\mid x\stackrel{\omega}{\leftrightarrow} y\}
\]
the cluster of $x$ in the configuration $\omega$. For a given subset $\Lambda\subseteq G$, we define similarly 
\[
 C_x^\Lambda(\omega)=\{y\in G\mid x\stackrel{\omega,\Lambda}{\leftrightarrow} y\}\subseteq \Lambda,
\]
where $x\stackrel{\omega,\Lambda}{\leftrightarrow} y$ means that there exists an active path in $\Gamma_\omega$ such at all the vertices in the path are elements of $\Lambda$. Note that $x\stackrel{\omega,\Lambda}{\leftrightarrow} y$ implies $C_x^\Lambda(\omega)=C_y^\Lambda(\omega)$. 

Now we define a function which counts clusters in a given set up to a certain size. For given $\omega\in \Omega$ and $\Lambda\in \cF(G)$ we define $F_\omega(\Lambda):\RR\to \NN_0$ by 
\begin{equation}\label{eq:defFclust}
 F_\omega(\Lambda)(m):=\sharp\left\{   C_x^\Lambda(\omega)\mid x\in \Lambda, |C_x^\Lambda(\omega)|\leq m  \right\}.
\end{equation}
For each $\Lambda\in \cF(G)$ and $\omega\in \Omega$ the function $F_\omega(\Lambda)$ is an element of $\cB(\RR)$, which is the space of all bounded and right-continuous functions, equipped with the supremum norm. We are interested in the limit $F_\omega(\Lambda_n)/K_\omega(\Lambda_n)$ as $n\to\infty$, where for some $\Lambda\in\cF(G)$ the expression $K_\omega(\Lambda)$ denotes the number of clusters in $\Lambda$, i.e.
\begin{equation}\label{eq:defKn}
 K_\omega(\Lambda):=\sharp \left\{C_x^\Lambda(\omega)\mid x\in \Lambda \right\} .
\end{equation}
Let us state the main result of this section.
\begin{Theorem}\label{thm:mainperc}
 Let $G$ be an amenable group, $(\Lambda_n)$ a tempered F\o lner sequence and $(\Omega,\cZ,\PP)$ the same probability space as above. Furthermore, let the function $F_\omega:\cF(G)\to \cB(\RR)$ be given as in \eqref{eq:defFclust} and $K_\omega(\Lambda_n)$ as in \eqref{eq:defKn}. Then there exists a set $\tilde\Omega\subseteq\Omega$ of full measure and a distribution function $\Phi\in \cB(\RR)$ associated to a probability measure, such that
\[
 \lim_{n\to\infty}\left\Vert  \frac{F_\omega(\Lambda_n)}{K_\omega(\Lambda_n)}- \Phi \right\Vert = 0
\]
holds true for all $\omega\in\tilde\Omega$, where $\Vert\cdot\Vert$ denotes the supremum norm in $\cB(\RR)$.
\end{Theorem}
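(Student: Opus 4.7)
The plan is to view each configuration $\omega$ as a coloring of $G$ with the finite alphabet $\cA := \{0,1\}^S$ via $\cC_\omega(v) := (\omega_{v,s})_{s \in S}$, and then apply the ergodic theorem (Theorem~\ref{thm:ET}) to the cluster-counting function $F_\omega$ and to the total cluster number $K_\omega$. Since $\PP = \prod_{v,s}\nu_s$ is a product measure, the shift $\tau_g\omega = \omega g^{-1}$ is a Bernoulli shift, hence measure-preserving and ergodic, so Theorem~\ref{theorem:freq} produces a full-measure set $\tilde\Omega_1 \subseteq \Omega$ on which all pattern frequencies $\nu_P$ exist along the tempered F\o lner sequence $(\Lambda_n)$ and are $\omega$-independent. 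Thus Assumption~\ref{ass:first} is met for every $\omega \in \tilde\Omega_1$.

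Next I would verify that $F_\omega:\cF(G) \to \cB(\RR)$ is $\cC_\omega$-invariant and almost-additive in the sense of Definition~\ref{defi:AA2}. Invariance is transparent: since $\cC_\omega$ records all half-edges at each vertex, the restriction $\cC_\omega|_Q$ determines every active edge with both endpoints in $Q$, hence determines the cluster-size distribution $F_\omega(Q)$; equivalent restrictions therefore produce equal values. For almost-additivity on a disjoint decomposition $Q = \bigsqcup_{i=1}^k Q_i$, the clusters of $Q$ arise from those of the $Q_i$ by merging along active edges with endpoints in different $Q_i$'s. A case analysis on the sizes $s_1, s_2$ of two merging clusters relative to the threshold $m$ shows that a single merger alters $F_\omega(Q)(m) - \sum_i F_\omega(Q_i)(m)$ by at most $2$ uniformly in $m$. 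The number of mergers is bounded by the total number of boundary-crossing active edges, which is at most $|S|\sum_i |\partial^1(Q_i)|$. Therefore $b(Q) := 2|S|\cdot|\partial^1(Q)|$ witnesses almost-additivity, and the required properties of a boundary term (Definition~\ref{defi:BT}) are easily checked: translation invariance and sub-additivity follow from those of $\partial^1$, the F\o lner condition gives $b(U_j)/|U_j| \to 0$, and $b(Q) \leq 2|S|\cdot|B_1|\cdot|Q|$ yields the linear bound. The same reasoning applies to $K_\omega:\cF(G) \to \RR$ with the same boundary term.

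Applying Theorem~\ref{thm:ET} to $F_\omega$ and $K_\omega$ produces deterministic limits $\overline F \in \cB(\RR)$ and $\overline K \in \RR$ such that, for every $\omega \in \tilde\Omega_1$,
\[
\Bigl\Vert \frac{F_\omega(\Lambda_n)}{|\Lambda_n|} - \overline F \Bigr\Vert \longrightarrow 0 \quad \text{and} \quad \frac{K_\omega(\Lambda_n)}{|\Lambda_n|} \longrightarrow \overline K.
\]
The crucial positivity $\overline K > 0$ is obtained by considering the pattern $P$ on the singleton $\{\id\}$ with all half-edges inactive: $P$ has strictly positive $\PP$-probability, hence $\nu_P > 0$ by the ergodic theorem, and each occurrence of $P$ produces a distinct cluster of size $1$, so $\overline K \geq \nu_P > 0$. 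Passing to a further full-measure set $\tilde\Omega \subseteq \tilde\Omega_1$ on which the denominators $K_\omega(\Lambda_n)$ are positive from some index on, we define $\Phi := \overline F/\overline K$.

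Finally, the ratio convergence follows from the triangle inequality
\[
\Bigl\Vert \frac{F_\omega(\Lambda_n)}{K_\omega(\Lambda_n)} - \Phi \Bigr\Vert \leq \frac{|\Lambda_n|}{K_\omega(\Lambda_n)}\Bigl\Vert \frac{F_\omega(\Lambda_n)}{|\Lambda_n|} - \overline F \Bigr\Vert + \Vert \overline F \Vert \cdot \Bigl| \frac{|\Lambda_n|}{K_\omega(\Lambda_n)} - \frac{1}{\overline K}\Bigr|,
\]
where both terms vanish as $n \to \infty$ on $\tilde\Omega$. The limit $\Phi$ inherits monotonicity and right-continuity as a uniform limit of such functions, satisfies $\Phi \leq 1$ since $F_\omega(\Lambda)(m) \leq K_\omega(\Lambda)$ for all $m$, and $\Phi(m) \to 1$ as $m \to \infty$ since the number of clusters of size larger than $m$ in $\Lambda_n$ is bounded by $|\Lambda_n|/m$. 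Hence $\Phi$ is the distribution function of a probability measure. The main obstacle will be the almost-additivity of $F_\omega$ in the Banach space $\cB(\RR)$: one must control, uniformly in the threshold $m$, how a single boundary-crossing active edge perturbs the entire cluster-size counting function, and then accumulate these uniform bounds into the boundary term.
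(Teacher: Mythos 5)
Your proof is correct and follows the paper's overall strategy — reduce to the Banach space-valued ergodic theorem (Theorem~\ref{thm:ET}) via $\cC_\omega$-invariance and almost-additivity, get frequencies from the Lindenstrauss theorem on a full-measure set, and combine the two convergences by a triangle inequality. The almost-additivity argument you sketch (bounding the effect of each boundary-crossing active edge by $2$ uniformly in the threshold $m$, and counting such edges by $|S|\sum_i|\partial^1(Q_i)|$) is precisely the paper's Lemma~\ref{la:aad}.

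The one place where you deviate from the paper's route is the treatment of the denominator. You apply Theorem~\ref{thm:ET} a second time, to the scalar-valued function $K_\omega$ (with the same boundary term, since $K_\omega(\Lambda)=F_\omega(\Lambda)(|\Lambda|)$), and then establish positivity of the resulting abstract limit by observing that the frequency of the all-inactive singleton pattern is $\prod_{s\in S}(1-p_s)>0$ and each occurrence contributes an isolated cluster. The paper instead proves its Lemma~\ref{la:kappa} (an adaptation of Grimmett's classical argument): it introduces $\gamma_x(\omega)=|C_x(\omega)|^{-1}$, applies Lindenstrauss directly to $\gamma_{\id}\circ T_x$, and thereby identifies the limit as $\kappa=\EE(|C_{\id}|^{-1})$, with positivity coming for free. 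Both routes are valid; yours is more uniform in that it reuses the Banach space machinery and avoids a separate pointwise ergodic argument, at the modest cost of not identifying $\overline K$ explicitly as $\EE(|C_{\id}|^{-1})$. That explicit identification is not needed for Theorem~\ref{thm:mainperc} itself, but the paper does lean on the formula $\kappa=\EE_p(|C_{\id}|^{-1})$ later when proving continuous dependence on $p$ (Lemma~\ref{la:kappacont}), so the authors' choice has a downstream payoff. Two minor remarks: the reduction to a further full-measure subset for positivity of the denominator is superfluous, since $K_\omega(\Lambda)\geq 1$ for every nonempty $\Lambda$; and your observation that $1-\Phi(m)\leq (m\overline K)^{-1}$ to deduce $\Phi(m)\to 1$ is a clean way to make the paper's ``easy to prove that $\Phi$ is a distribution function'' explicit.
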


By definition it is clear that $F_\omega(\Lambda)=F_\omega(\Lambda')$ whenever $\Lambda\stackrel{\omega}{\sim}\Lambda'$. Furthermore, if $\Lambda\subseteq G$ and $\omega,\omega'\in \Omega$ are such that 
\[
 \sharp \big( E_\omega|_\Lambda \bigtriangleup E_{\omega'}|_\Lambda\big)
=\sharp \big( (E_\omega|_\Lambda\setminus E_{\omega'}|_\Lambda) \cup (E_{\omega'}|_\Lambda\setminus E_{\omega}|_\Lambda) \big) =1,
\]
then
\[
 |F_\omega(\Lambda)(m)- F_{\omega'}(\Lambda)(m)|\leq 2.
\]
for all $m\in \RR$. This is clear since the change of one edge from active to non-active (respectively from non-active to active) will perturb the number of clusters in $\Lambda$ of a fixed size at most by 2. Generalizing this idea leads to the following lemma.
\begin{Lemma}\label{la:aad}
Let $\omega\in \Omega$, $\Lambda\subseteq G$ finite and disjoint $\Lambda_1,\dots,\Lambda_k\subseteq \Lambda$ with $\bigcup_{i=1}^k \Lambda_i=\Lambda$ be given. Then
 \[
\left| F_\omega(\Lambda)(m) - \sum_{i=1}^k F_\omega(\Lambda_i)(m)\right| \leq  2|S|\sum_{i=1}^k \left|\partial^1(\Lambda_i)\right|  
 \]
holds true for all $m\in \RR$.
\end{Lemma}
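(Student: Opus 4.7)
\textbf{Proof plan for Lemma \ref{la:aad}.} The strategy is to interpolate between $\omega$ and a modified configuration $\omega^\ast$ in which every edge joining two distinct pieces $\Lambda_i,\Lambda_j$ has been deactivated, and then to use the one-edge perturbation estimate (with constant $2$) recorded in the paragraph preceding the lemma. More precisely, define the set of \emph{crossing edges}
\[
E_{\mathrm{cross}}:=\bigl\{\,[x,y]\in E_\omega\bigm| x\in\Lambda_i,\ y\in\Lambda_j,\ i\neq j\,\bigr\},
\]
and let $\omega^\ast\in\Omega$ agree with $\omega$ except that $X_{x,yx^{-1}}(\omega^\ast)=X_{y,xy^{-1}}(\omega^\ast)=0$ for every $[x,y]\in E_{\mathrm{cross}}$.

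The first step is to check that $F_{\omega^\ast}(\Lambda)(m)=\sum_{i=1}^k F_\omega(\Lambda_i)(m)$ for all $m\in\RR$. Indeed, by construction $E_{\omega^\ast}|_\Lambda=\bigcup_{i=1}^k E_\omega|_{\Lambda_i}$, and because the $\Lambda_i$ are disjoint, every connected component of $\Gamma_{\omega^\ast}|_\Lambda$ is entirely contained in one of the $\Lambda_i$. Since deactivating purely crossing edges does not change the edges internal to any single $\Lambda_i$, the clusters $C_x^\Lambda(\omega^\ast)$ (for $x\in\Lambda_i$) coincide with $C_x^{\Lambda_i}(\omega)$, which gives the claimed equality after collecting clusters of size at most $m$.

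The second step is to bound $|E_{\mathrm{cross}}|$ in terms of the boundaries $\partial^1(\Lambda_i)$. If $[x,y]\in E_{\mathrm{cross}}$ with $x\in\Lambda_i$ and $y\in\Lambda_j$, $i\neq j$, then $d(x,G\setminus\Lambda_i)\leq 1$, so $x\in\partial^1(\Lambda_i)$, and analogously $y\in\partial^1(\Lambda_j)$. For each $i$, every vertex of $\Lambda_i\cap\partial^1(\Lambda_i)$ has at most $|S|$ neighbours in $\Gamma$ and hence is an endpoint of at most $|S|$ crossing edges; thus the number of crossing edges with an endpoint in $\Lambda_i$ is bounded by $|S|\,|\partial^1(\Lambda_i)|$. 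Summing over $i$, and noting that each edge in $E_{\mathrm{cross}}$ is counted (at least) once in this sum, yields
\[
|E_{\mathrm{cross}}|\;\leq\;|S|\sum_{i=1}^k|\partial^1(\Lambda_i)|.
\]

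The final step is to pass from $\omega$ to $\omega^\ast$ by flipping one edge at a time. Each such flip changes $F_{\cdot}(\Lambda)(m)$ by at most $2$, by the observation in the paragraph preceding the lemma (merging or splitting a cluster alters the counting function $n\mapsto\sharp\{\text{clusters of size}\leq m\}$ by at most two units). Applying this bound $|E_{\mathrm{cross}}|$ many times and combining with the first two steps gives
\[
\bigl|F_\omega(\Lambda)(m)-\textstyle\sum_{i=1}^k F_\omega(\Lambda_i)(m)\bigr|
=\bigl|F_\omega(\Lambda)(m)-F_{\omega^\ast}(\Lambda)(m)\bigr|
\leq 2|E_{\mathrm{cross}}|
\leq 2|S|\sum_{i=1}^k|\partial^1(\Lambda_i)|,
\]
which is the claim. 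The most delicate point is the verification of the $2$-perturbation bound for $F_{\cdot}(\Lambda)(m)$: a case analysis on the sizes of the two clusters that merge (or the two pieces into which one cluster splits) relative to the threshold $m$ is needed, showing that the contribution to the counting function moves by at most two indicator-valued summands. Everything else is bookkeeping on the graph combinatorics of crossing edges.
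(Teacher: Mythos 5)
Your proof is correct and follows essentially the same route as the paper's: replace $\omega$ by a decoupled configuration with crossing edges deactivated, use the one-edge $2$-perturbation bound inductively, and count crossing edges via inner boundary vertices (each with at most $|S|$ neighbours). The only cosmetic difference is that you flip only the \emph{active} crossing edges, whereas the paper's set $B$ contains all crossing edges in $E|_\Lambda$; both yield the stated bound.
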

\begin{proof}
 Fix $m\in \RR$ and let $B:=\{[x,y]\in E|_\Lambda\mid x\in \Lambda_i, y\in \Lambda_j\text{ for some }i\neq j\}$ and  $\omega'\in \Omega$ be such that $e\notin E_{\omega'}$ for all $e\in B$ and $e'\in E_{\omega'}\setminus B$ if and only if $e'\in E_\omega\setminus B$.
Using the preliminary considerations one can show inductively that
\[
 |F_\omega(\Lambda)(m)- F_{\omega'}(\Lambda)(m)|\leq 2\cdot \sharp(B).
\]
Furthermore, as each edge in $B$ connects a vertex in $\partial^1(\Lambda_i)$ with a vertex in $\partial^1(\Lambda_j)$ for some distinct $i,j\in\{1,\dots,n\}$ and each vertex in $\Gamma$ is adjacent to exactly $|S|$ vertices, we estimate the number of elements in $B$ by $\sharp(B)\leq  |S|\sum_{i=1}^k |\partial^1(\Lambda_i)|$. It remains to show $F_{\omega'}(\Lambda)(m)=\sum_{i=1}^k F_\omega(\Lambda_i)(m)$. Here we use a decoupling argument. In the configuration $\omega'$ we have for arbitrary $i\in\{1,\dots,k\}$ and $x\in \Lambda_i$ that
$C_x^\Lambda(\omega')\cap \Lambda_j =\emptyset$ for all $j\neq i$ as all the edges in $B$ are non-active in $\omega'$. Therefore,
\[
F_{\omega'}(\Lambda)(m)=\sum_{i=1}^k F_{\omega'}(\Lambda_i)(m)=\sum_{i=1}^k F_\omega(\Lambda_i)(m)
\]
holds true, where the last equality follows from the fact that $e\in E_{\omega'} \Leftrightarrow e\in E_\omega$ for all $e\in E|_{\Lambda_i}$, $i\in\{1,\dots,k\}$. The claim follows.
\end{proof}
Note that by setting $\cA:=\{0,1\}^S $ and $\cC_\omega: V\to \cA$, $\cC_\omega(v):= (\omega_{v,s})_{s\in S}$, each $\omega\in\Omega$ may be interpreted as coloring of the vertices. It is obvious that for each $\omega\in\Omega$ the function $F_\omega(\cdot)$ is $\cC_\omega$-invariant, i.e. for any $Q,U\in\cF(G)$ the equivalence of the patterns $\cC_\omega\vert_Q$ and $\cC_\omega\vert_U$ implies $F_\omega(Q)=F_\omega(U)$, cf.\@ Definition~\ref{defi:AA2}.

In order to apply Theorem \ref{thm:ET} it remains to prove the existence of the frequencies. Here we use the notation
\[
 \sharp_P (\cC_\omega|_{\Lambda}):=\sharp \left\{ x\in G\mid  \tilde\Lambda x \subseteq\Lambda, P(y)=\cC_\omega(yx) \text{ for all }y\in \tilde\Lambda \right\},
\]
where $\Lambda,\tilde\Lambda\in\cF(G)$, $P:\tilde\Lambda\to\cA$ and $\omega\in\Omega$.
\begin{Lemma}\label{la:freq}
 There exists a set $\tilde\Omega\subseteq \Omega$ of full measure such that for all $\omega\in\tilde\Omega$ and all $P:\Lambda\to A$, $\Lambda\in\cF(G)$ the limit
\[
 \nu_P:= \lim_{n\to\infty}\frac{\sharp_P (\cC_\omega|_{\Lambda_n})}{|\Lambda_n|} 
= \prod_{v\in \Lambda}\prod_{s\in S}\left( p_s \delta_{1,(P(v))_s} + (1-p_s) \delta_{0,(P(v))_s} \right)
\]
exists and does not depend on $\omega$. Here $\delta_{i,j}$ denotes the Kronecker delta.
\end{Lemma}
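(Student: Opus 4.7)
The strategy is to cast Lemma~\ref{la:freq} as an immediate consequence of Theorem~\ref{theorem:freq} after identifying the percolation configuration space with a coloring space and verifying that the shift action satisfies its hypotheses. Concretely, I would set $\cA := \{0,1\}^S$ and, for each $\omega \in \Omega = \{0,1\}^{V \times S}$, define the coloring $\cC_\omega:G \to \cA$ by $\cC_\omega(v) := (\omega_{v,s})_{s \in S}$, as already suggested in the text preceding the lemma. Under this bijective identification, $\Omega$ becomes $\cA^G$ and the measure $\PP = \prod_{v \in V, s \in S}\nu_s$ becomes the product measure $\prod_{v \in G} \mu$, where $\mu$ is the product Bernoulli measure $\prod_{s \in S} \nu_s$ on $\cA$.

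Next, I would introduce the shift $\tau$ from \eqref{def:tau}, namely $(\tau_g \omega)_{v,s} = \omega_{vg,s}$, which corresponds to the translation $\cC_{\tau_g\omega}(v) = \cC_\omega(vg)$ of colorings. Because $\PP$ is a product measure with identical factors at each vertex, $\tau$ is manifestly measure preserving. Ergodicity—in fact, strong mixing—of the Bernoulli shift of a countable infinite group on $(\cA^G, \mu^{\otimes G})$ is standard: for any two cylinder sets $A$ and $B$ depending on finite index sets $F_A, F_B \subseteq G$, one can choose $g \in G$ with $F_A g^{-1} \cap F_B = \emptyset$, and independence of the coordinates immediately gives $\PP(\tau_g^{-1}(A) \cap B) = \PP(A)\PP(B)$. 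This is essentially the content of the remark following Theorem~\ref{theorem:freq} and needs only to be invoked.

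With measure preservation and ergodicity of $\tau$ established, Theorem~\ref{theorem:freq} applies and yields a set $\tilde\Omega \subseteq \Omega$ of full measure on which, for every pattern $P \in \cP$, the limit
\[
\nu_P = \lim_{n \to \infty} \frac{\sharp_P(\cC_\omega|_{\Lambda_n})}{|\Lambda_n|} = \EE(f_P) = \PP(A_P)
\]
exists and is independent of $\omega$. Here $A_P = \{\omega \in \Omega : \cC_\omega|_{D(P)} = P\}$, and as in the proof of Theorem~\ref{theorem:freq} we may assume $\id \in D(P)$ without loss of generality.

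It remains to evaluate $\PP(A_P)$, which is a direct computation using the product structure of $\PP$ and the independence of the random variables $X_{v,s}$. Writing $\Lambda := D(P)$, the event $A_P$ is the intersection, over $v \in \Lambda$ and $s \in S$, of the events $\{\omega_{v,s} = (P(v))_s\}$, each of which has probability $p_s$ if $(P(v))_s = 1$ and $1-p_s$ if $(P(v))_s = 0$. Independence yields
\[
\PP(A_P) = \prod_{v \in \Lambda} \prod_{s \in S}\bigl( p_s \, \delta_{1,(P(v))_s} + (1-p_s)\, \delta_{0,(P(v))_s}\bigr),
\]
which is the claimed formula. The only conceptual point in the argument is the ergodicity of the Bernoulli shift on an amenable group; this is classical, so I do not anticipate a genuine obstacle, and the whole proof is essentially a bookkeeping translation between the percolation language and the coloring framework of Section~6.
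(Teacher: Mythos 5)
Your proof is correct and matches the paper's intended argument: the paper likewise reduces Lemma~\ref{la:freq} to Theorem~\ref{theorem:freq} via the shift $(T_g\omega)_{v,s}=\omega_{vg,s}$ of \eqref{def:T} (which under the identification $\cA=\{0,1\}^S$ is precisely $\tau$ from \eqref{def:tau}), using that this is an ergodic, measure-preserving action for the i.i.d.\ product measure and then evaluating $\PP(A_P)$ by independence. The only minor difference is that you spell out the ergodicity via mixing of cylinder events, which the paper delegates to the remark following Theorem~\ref{theorem:freq}.
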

 This lemma is basically an application of Lindenstrauss' pointwise ergodic theorem (cf.\@ \cite{Lindenstrauss-01}) in the special case quoted in Theorem \ref{thm:linde}. The proof of Lemma \ref{la:freq} follows the lines of the proof of Theorem \ref{theorem:freq} using the mapping
\begin{equation}\label{def:T}
T:G\times \Omega\to\Omega,\quad (g,\omega)\mapsto T_g\omega, \quad \text{where}\quad (T_g\omega)_{v,s}=\omega_{vg,s}.
\end{equation}
Note that $T$ is an ergodic measure preserving left action on the probability space.

The number of clusters per vertex is defined by
\[
 \kappa := \EE(|C_{\id}|^{-1})=\sum_{n=1}^\infty \frac{1}{n}\PP(|C_{\id}|=n).
\]
Note that $\kappa\in(0,1]$ as we assumed $p\in[0,1)^S$. The following Lemma was proven in \cite{Grimmett-99} for $G=\ZZ^d$.
\begin{Lemma}\label{la:kappa}
 Let $G$ be an amenable group, $(\Lambda_n)$ a tempered F\o lner sequence and $(\Omega,\cZ,\PP)$ the same probability space as above. Then
\[
 \lim_{n\to\infty}\frac{K_\omega(\Lambda_n)}{|\Lambda_n|}= \kappa
\]
holds true for $\PP$-almost all $\omega$.
\end{Lemma}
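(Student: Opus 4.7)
First I would rewrite $K_\omega(\Lambda_n)$ using the identity $\sum_{x \in C} 1/|C| = 1$ valid for every finite cluster $C$, giving
\[
K_\omega(\Lambda_n) \;=\; \sum_{x \in \Lambda_n} \frac{1}{|C_x^{\Lambda_n}(\omega)|}.
\]
Set $f(\omega) := 1/|C_{\id}(\omega)|$ with the convention $1/\infty := 0$; this is a bounded measurable function with $\EE f = \kappa$. A direct check (using $(T_g\omega)_{v,s}=\omega_{vg,s}$) shows $|C_{\id}(T_g\omega)| = |C_g(\omega)|$ for every $g \in G$, so
\[
\widetilde K_\omega(\Lambda_n) \;:=\; \sum_{g \in \Lambda_n} \frac{1}{|C_g(\omega)|} \;=\; \sum_{g \in \Lambda_n} f(T_g \omega).
\]
The action $T$ defined in~\eqref{def:T} is measure preserving by the product structure of $\PP$ and ergodic, since the Bernoulli shift action of a countable group on a product space is mixing. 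Hence Lindenstrauss' theorem (Theorem~\ref{thm:linde}), applied along the tempered F\o lner sequence $(\Lambda_n)$, yields a set of full measure on which $\widetilde K_\omega(\Lambda_n)/|\Lambda_n| \to \kappa$.

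The remaining task is to show that the truncation error $K_\omega(\Lambda_n) - \widetilde K_\omega(\Lambda_n)$ is $o(|\Lambda_n|)$. Fix $R \in \NN$ and consider the set of \emph{truncated} vertices $\Sigma_n := \{x \in \Lambda_n : C_x^{\Lambda_n}(\omega) \neq C_x(\omega)\}$. If $x \in \Sigma_n$, then any active $\omega$-path from $x$ to a vertex $y \in C_x(\omega)\setminus \Lambda_n$ contains a last vertex inside $\Lambda_n$; this vertex lies in $\partial^1(\Lambda_n) \cap \Lambda_n$ and belongs to the same $\Lambda_n$-cluster as $x$. Thus the number of distinct $\Lambda_n$-clusters meeting $\Sigma_n$ is at most $|\partial^1(\Lambda_n)|$, and summing $1/|C|$ over each such cluster gives $1$, so
\[
\sum_{x \in \Sigma_n} \frac{1}{|C_x^{\Lambda_n}(\omega)|} \;\leq\; |\partial^1(\Lambda_n)|.
\]
For the other sum I would split $\Sigma_n$ by cluster size: a vertex $x \in \Sigma_n$ with $|C_x(\omega)| \leq R$ has $d_S(x, G\setminus\Lambda_n) \leq R-1$ (its full cluster is connected with at most $R$ vertices yet leaves $\Lambda_n$), so such vertices form a subset of $\partial^R(\Lambda_n)$; vertices in $\Sigma_n$ with $|C_x(\omega)|>R$ contribute at most $1/R$ each. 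Combining,
\[
\bigl|K_\omega(\Lambda_n) - \widetilde K_\omega(\Lambda_n)\bigr| \;\leq\; |\partial^1(\Lambda_n)| + |\partial^R(\Lambda_n)| + \frac{|\Lambda_n|}{R}.
\]

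Dividing by $|\Lambda_n|$ and using the F\o lner property gives $\limsup_n |\Lambda_n|^{-1}\bigl|K_\omega(\Lambda_n)-\widetilde K_\omega(\Lambda_n)\bigr| \leq 1/R$ almost surely; letting $R \to \infty$ shows this limsup vanishes. Combined with $\widetilde K_\omega(\Lambda_n)/|\Lambda_n| \to \kappa$, the lemma follows. The only subtle point, and the reason the argument closes, is the use of the cluster-counting identity to bound the first error sum by a pure boundary term: without it, the naive estimate by the number of truncated vertices $|\Sigma_n|$ could be linear in $|\Lambda_n|$, since in the presence of infinite clusters arbitrarily many vertices may be connected to the exterior.
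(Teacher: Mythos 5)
Your proof is correct and follows essentially the same route as the paper: both rewrite $K_\omega(\Lambda_n)=\sum_{x\in\Lambda_n}1/|C_x^{\Lambda_n}(\omega)|$, apply Lindenstrauss' theorem to $\gamma_{\id}(\omega)=1/|C_{\id}(\omega)|$, and control the discrepancy between truncated and full clusters by a boundary-cluster count. The only (harmless) inefficiency is the $R$-truncation used to bound $\sum_{x\in\Sigma_n}\gamma_x(\omega)$: since $1/|C_x^{\Lambda_n}(\omega)|\geq 1/|C_x(\omega)|$, the truncation error is nonnegative and already bounded by $\sum_{x\in\Sigma_n}1/|C_x^{\Lambda_n}(\omega)|\leq|\partial^1(\Lambda_n)|$, so the $R$-splitting (and the extra $|\partial^R(\Lambda_n)|+|\Lambda_n|/R$ term) can be dropped, which is exactly what the paper's sandwich argument does.
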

\begin{proof}
We adapt the proof of Theorem 4.2 in \cite{Grimmett-99} to the case of amenable groups. For $x\in G$, $\omega\in \Omega$ and $n\in\NN$ set
\[
 \gamma_x(\omega):=\begin{cases}
             |C_x(\omega)|^{-1},& \text{if } |C_x(\omega)|<\infty\\ 0,&\text{else}
            \end{cases}
\quad\text{and}\quad
\gamma_x^{(n)}(\omega):=|C_x^{\Lambda_n}(\omega)|^{-1}.
\]
As $|C_x(\omega)|\geq |C_x^{\Lambda_n}(\omega)|$, this obviously yields
\begin{equation}\label{eq:la:kappa1}
 \gamma_x(\omega)\leq \gamma_x^{(n)}(\omega).
\end{equation}
Furthermore, since for each $y\in\Lambda_n$, one has 
\[
 \sum_{x\in C_y^{\Lambda_n}(\omega)} \gamma_x^{(n)}(\omega)=|C_y^{\Lambda_n}(\omega)|\gamma_y^{(n)}(\omega)=1,
\]
we obtain
\[
 K_\omega(\Lambda_n)=\sum_{x\in\Lambda_n}\gamma_x^{(n)}(\omega).
\]
This, together with \eqref{eq:la:kappa1}, gives
\[
 \frac{1}{|\Lambda_n|}K_\omega(\Lambda_n)\geq \frac{1}{|\Lambda_n|}\sum_{x\in \Lambda_n} \gamma_x(\omega).
\]
Note that with $T:\Omega\to\Omega$ given as in \eqref{def:T} we have $\gamma_x(\omega)=\gamma_{\id}(T_x\omega)$. Therefore, with the Lindenstrauss ergodic theorem, we arrive at
\[
 \frac{1}{|\Lambda_n|}\sum_{x\in \Lambda_n}\gamma_x(\omega)= \frac{1}{|\Lambda_n|}\sum_{x\in \Lambda_n}\gamma_{\id}(T_x\omega) \to \EE(\gamma_{\id}) \quad\text{as}\quad n\to \infty
\]
for $\PP$-almost all $\omega\in\Omega$. This proves the following lower bound
\[
 \liminf_{n\to\infty}\left(\frac 1{|\Lambda_n|}K_\omega(\Lambda_n) \right)\geq \EE(\gamma_{\id})=\kappa\quad \PP-\text{a.s.}
\]
It remains to show that $\kappa$ is also an upper bound for $\limsup_n (K_\omega(\Lambda_n)/|\Lambda_n|)$. To this end, we estimate
\begin{align*}
 \sum_{x\in\Lambda_n}\gamma_x^{(n)}(\omega)
&=\sum_{x\in\Lambda_n}\gamma_x(\omega)+ \sum_{x\in\Lambda_n}(\gamma_x^{(n)}(\omega)-\gamma_x(\omega))\\
&\leq \sum_{x\in\Lambda_n}\gamma_x(\omega)+ \sum_{\ato{x\in\Lambda_n}{C_x(\omega)\cap \partial^1(\Lambda_n)\neq\emptyset}}\gamma_x^{(n)}(\omega).
\end{align*}
Here we used that $\gamma_x^{(n)}(\omega)=\gamma_x(\omega)$ whenever $C_x(\omega)$ is a subset of $\Lambda_n$. Note that the second sum is nothing but the number of all clusters $C_x(\omega)$, $x\in\Lambda_n$ which have a non-empty intersection with the boundary $\partial^1(\Lambda_n)$. This number is bounded from above by $|\partial^1(\Lambda_n)|$. In light of that, 
\[
 \frac{1}{|\Lambda_n|}\sum_{x\in\Lambda_n}\gamma_x^{(n)}(\omega)
\leq \frac{1}{|\Lambda_n|}\sum_{x\in\Lambda_n}\gamma_x(\omega) + \frac{|\partial^1(\Lambda_n)|}{|\Lambda_n|}.
\]
We conclude that
\[
 \limsup_{n\to\infty}\frac{1}{|\Lambda_n|}\sum_{x\in\Lambda_n}\gamma_x^{(n)}(\omega)
\leq \limsup_{n\to\infty}\frac{1}{|\Lambda_n|}\sum_{x\in\Lambda_n}\gamma_x(\omega) = \kappa \quad \PP-\text{a.s.},
\]
since $(\Lambda_n)$ is a F\o lner sequence.
\end{proof}
We are now in position to prove Theorem \ref{thm:mainperc}.
\begin{proof}[Proof of Theorem \ref{thm:mainperc}]
 Let $\tilde\Omega\subseteq \Omega$ be given as in Lemma \ref{la:freq} and fix some $\omega\in\tilde\Omega$. We have already seen that $F_\omega:\cF\to \cB(\RR)$ is $\cC_\omega$-invariant and in Lemma \ref{la:aad} we proved that $F_\omega$ is almost additive with boundary term $2|S||\partial^1(\cdot)|$. Therefore, Theorem \ref{thm:ET} yields that there exists a function $F:\cF(G)\to \cB(\RR)$ with
\[
 \lim_{n\to\infty}\left\Vert \frac{F_\omega(\Lambda_n)}{\vert \Lambda_n\vert}- F \right\Vert = 0
\]
where $\|\cdot\|$ denotes the supremum norm. 
Thus, we get by Lemma \ref{la:kappa},
 \[
  \lim_{n\to\infty}\left\Vert \frac{F_\omega(\Lambda_n)}{ K_\omega(\Lambda_n)}- \Phi \right\Vert 
= \lim_{n\to\infty}\left\Vert \frac{F_\omega(\Lambda_n)}{\vert \Lambda_n\vert}\frac{|\Lambda_n|}{K_\omega(\Lambda_n)}- \Phi \right\Vert
= 0,
 \]
where $\Phi=\frac 1 \kappa F$. Using uniform convergence, it is easy to prove that $\Phi$ is a distribution function of a probability measure. 
\end{proof}
\begin{Corollary}
Let $G$ be an amenable group, $(\Lambda_n)$ a tempered F\o lner sequence and $(\Omega,\cZ,\PP)$ the same probability space as above. Then there exists a set of full measure $\tilde\Omega\subseteq\Omega$ such that for each $m\in \NN$ and $\omega\in\tilde\Omega$ the densities of clusters of size $m$ defined as the limits
\[
 c_m:=\lim_{n\to\infty}\frac{1}{K_\omega(\Lambda_n)}\sharp\left\{ C_x^{\Lambda_n}  \mid x\in\Lambda_n,\, \left|C^{\Lambda_n}_x(\omega)\right|= m \right\},
\]
\[
 d_m:=\lim_{n\to\infty}\frac{1}{|\Lambda_n|}\sharp\left\{x\in \Lambda_n \mid \left|C_x(\omega)\right|= m \right\}\ \text{ and }\
d_\infty :=\lim_{n\to\infty}\frac{1}{|\Lambda_n|}\sharp\left\{x\in \Lambda_n \mid \left|C_x(\omega)\right|= \infty \right\}
\]
exist and do not depend on $(\Lambda_n)$ and $\omega\in\tilde\Omega$. Furthermore, the convergence of $c_m$ is uniform in $m$ and $\sum_{m\in\NN} c_m =1$ holds true.
\end{Corollary}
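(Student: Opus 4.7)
My plan is to handle $d_m$ and $d_\infty$ directly via Lindenstrauss' pointwise ergodic theorem, and to deduce $c_m$ together with its uniform convergence in $m$ from Theorem \ref{thm:mainperc}. Fix a tempered F\o{}lner sequence $(\Lambda_n)$ throughout. For each $m \in \NN$ and for $m = \infty$, define $f_m(\omega) := \one_{\{|C_{\id}(\omega)| = m\}}$; these are measurable and bounded, hence in $L^1(\PP)$. A short computation with the $G$-action $T$ from equation \eqref{def:T} gives $f_m(T_x\omega) = \one_{\{|C_x(\omega)| = m\}}$, so by Theorem \ref{thm:linde} (applicable since $T$ is measure-preserving and ergodic and $(\Lambda_n)$ is tempered) we obtain
\[
\frac{1}{|\Lambda_n|} \sharp\{x \in \Lambda_n : |C_x(\omega)| = m\} \longrightarrow \EE(f_m) = \PP(|C_{\id}| = m) =: d_m
\]
on a full-measure set $\Omega_m$. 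Intersecting the countable collection $\{\Omega_m\}_{m \in \NN \cup \{\infty\}}$ yields a full-measure set $\tilde\Omega_1$ on which every $d_m$ and $d_\infty$ exists with a value independent of $\omega$ and of $(\Lambda_n)$.

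For $c_m$, let $\tilde\Omega_2$ be the set of full measure supplied by Theorem \ref{thm:mainperc}, on which $\|F_\omega(\Lambda_n)/K_\omega(\Lambda_n) - \Phi\|_\infty \to 0$. Since cluster sizes lie in $\NN$, the function $F_\omega(\Lambda_n)$ is constant on each interval $[m-1, m)$ for $m \in \NN$, and its jump at the integer $m$ equals the number of clusters of size exactly $m$ in $\Lambda_n$. Hence
\[
\sharp\bigl\{C_x^{\Lambda_n}(\omega) : x \in \Lambda_n,\, |C_x^{\Lambda_n}(\omega)| = m\bigr\} = F_\omega(\Lambda_n)(m) - F_\omega(\Lambda_n)(m-1),
\]
and the triangle inequality yields
\[
\sup_{m \in \NN} \left| \frac{\sharp\{\cdots = m\}}{K_\omega(\Lambda_n)} - c_m \right| \leq 2 \left\| \frac{F_\omega(\Lambda_n)}{K_\omega(\Lambda_n)} - \Phi \right\|_\infty \longrightarrow 0,
\]
where $c_m := \Phi(m) - \Phi(m-1)$ (with $\Phi(0) := 0$). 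This establishes both the existence of each $c_m$ and uniform convergence in $m$ in a single stroke. Since $\Phi$ is the distribution function of a probability measure supported on $\NN$, a telescoping identity gives $\sum_{m \in \NN} c_m = \lim_{M \to \infty} \Phi(M) = 1$. Setting $\tilde\Omega := \tilde\Omega_1 \cap \tilde\Omega_2$ completes the construction.

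The most delicate point is the $(\Lambda_n)$-independence of $c_m$, since Theorem \ref{thm:mainperc} by itself only produces $\Phi$ along the fixed tempered F\o{}lner sequence. To rule out dependence on the choice of sequence, one invokes the semi-explicit limit formula in item (ii) of Theorem \ref{thm:ET}: the underlying Banach-space limit $F$ is given in terms of the pattern frequencies $\nu_P$, and Lemma \ref{la:freq} identifies each $\nu_P$ as a product over $v \in D(P)$ and $s \in S$ that does not involve $(\Lambda_n)$ at all. Since $\kappa = \EE(|C_{\id}|^{-1})$ is also intrinsic, $\Phi = F/\kappa$ and therefore each $c_m$ is intrinsic to the coloring rather than to the F\o{}lner sequence, which closes the argument.
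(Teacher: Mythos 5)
Your proposal is correct, and it diverges from the paper's route in one interesting place. For $d_m$ ($m$ finite) the paper does \emph{not} invoke Lindenstrauss again; instead it passes through the auxiliary functions $f_\omega(\Lambda)(m)=F_\omega(\Lambda)(m)-F_\omega(\Lambda)(m-1)$ and $g_\omega(\Lambda)(m)=m\,f_\omega(\Lambda)(m)$ (the number of vertices of $\Lambda$ lying in $\Lambda$-clusters of size exactly $m$), deduces $g_\omega(\Lambda_n)(m)/|\Lambda_n|\to m\,f(m)$ from the uniform convergence $F_\omega(\Lambda_n)/|\Lambda_n|\to F$, and then converts from the \emph{restricted} clusters $C_x^{\Lambda_n}$ to the \emph{full} clusters $C_x$ via the boundary estimate $|\partial^m(\Lambda_n)|/|\Lambda_n|\to 0$. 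Your version applies Theorem~\ref{thm:linde} directly to $f_m(\omega)=\one_{\{|C_{\id}(\omega)|=m\}}$, noting $f_m(T_x\omega)=\one_{\{|C_x(\omega)|=m\}}$, which kills two birds at once (it treats $d_m$ exactly like $d_\infty$ and avoids the boundary step entirely) and identifies $d_m=\PP(|C_{\id}|=m)$ explicitly; the paper's route instead produces the size-biased identity $d_m=m\,f(m)$, which is the same number but keeps $d_m$ tied to the Banach-space limit $F$. For $c_m$ the two proofs are essentially the same idea: you bound $\sup_m$ of the difference by $2\|F_\omega(\Lambda_n)/K_\omega(\Lambda_n)-\Phi\|_\infty$ straight from Theorem~\ref{thm:mainperc}, whereas the paper reassembles this from $F_\omega/|\Lambda_n|\to F$ and $|\Lambda_n|/K_\omega(\Lambda_n)\to 1/\kappa$; both give the uniform convergence and $\sum_m c_m=\lim_M\Phi(M)=1$. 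Your closing remark on $(\Lambda_n)$-independence, via the semi-explicit formula in Theorem~\ref{thm:ET}(ii) together with the explicit product expression for $\nu_P$ from Lemma~\ref{la:freq} and the intrinsic $\kappa=\EE(|C_{\id}|^{-1})$, makes explicit a point the paper leaves implicit; it is a welcome addition.
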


\begin{Remark}
 To define the densities $c_m$, one counts clusters of size $m$ in a certain set and normalizes by the number of all clusters in this set. In contrast to that, to define $d_m$ one counts the vertices in clusters of size $m$ in a certain set and normalizes by the number of all vertices in this set. The distribution function $F$ given by Theorem \ref{thm:mainperc} is associated with the densities $c_m$. Using the uniform convergence of the approximants one can show the uniform existence of the $c_m$ and the fact that the $c_m$ sum up to one. One can also define an associated distribution function $G_\omega(\Lambda):\RR\to \NN_0$ for the densities $d_m$ by setting for given $\omega\in \Omega$ and $\Lambda\in \cF(G)$ 
\begin{equation*}
 G_\omega(\Lambda)(m):=\sharp\left\{   x\in \Lambda \mid |C_x^\Lambda(\omega)|\leq m  \right\}.
\end{equation*}
 However, it is not possible to prove an adapted version of Lemma \ref{la:aad} for the functions $G_\omega(\Lambda)$ and hence one cannot apply the ergodic theorem to obtain uniform convergence. This causes the qualitative difference in the results for the densities $c_m$ and $d_m$.
\end{Remark}

\begin{proof}
 Let us begin with the existence of the density $d_\infty$. To do so we use $|C_x(\omega)|=|C_{\id}(T_x\omega)|$ to obtain
\begin{align*}
 \frac{\sharp\{x\in\Lambda_n\mid |C_x(\omega)|=\infty\}}{|\Lambda_n|}
&= \frac{1}{|\Lambda_n|}\sum_{x\in \Lambda_n}\mathbf 1_{\{x\in\Lambda_n\mid |C_x(\omega)|=\infty\}}(x)\\
&= \frac{1}{|\Lambda_n|}\sum_{x\in \Lambda_n}\mathbf 1_{\{x\in\Lambda_n\mid |C_{\id}(T_x\omega)|=\infty\}}(x).
\end{align*}
The application of Theorem \ref{thm:linde} with $\phi:\Omega\to\{0,1\}$, $\phi(\omega):=\mathbf 1_{\{|C_{\id}(\omega)|=\infty\}}(\omega)$ shows that there exists a set $\Omega'\subseteq\Omega$ of full measure such that the following limits exist for all $\omega\in\Omega'$
\begin{align*}
\lim_{n\to\infty}\frac{\sharp\{x\in\Lambda_n\mid |C_x(\omega)|=\infty\}}{|\Lambda_n|}
&= \lim_{n\to\infty}\frac{1}{|\Lambda_n|}\sum_{x\in \Lambda_n}\phi(T_x\omega)= \EE \phi = \PP(|C_{\id}|=\infty)=:d_\infty.
\end{align*}

Now let $\Omega''\subseteq\Omega$ be the set of full measure such that Theorem \ref{thm:mainperc} holds true for all $\omega\in\Omega''$ and set $\tilde\Omega:=\Omega'\cap\Omega''$. From now on we fix an element $\omega\in\tilde\Omega$. For each $\Lambda\in\cF(G)$, set
\[
 f_\omega(\Lambda):\RR\to\NN_0,\quad  m\mapsto f_\omega(\Lambda)(m):=\sharp\left\{ C_x^\Lambda\mid x\in\Lambda, |C_x^\Lambda|= \lfloor m\rfloor \right\},
\]
\[
 g_\omega(\Lambda):\RR\to\NN_0,\quad m\mapsto g_\omega(\Lambda)(m):=\sharp \left\{ x\in\Lambda \mid  |C_x^\Lambda|= \lfloor m\rfloor \right\}
\]
and $f,g:\RR\to \RR$ by $f(m):=F(m)-F(m-1)$ and $g(m):=m f(m)$, where $F$ is the limit function given by $F=\lim_{n\to\infty}F_\omega(\Lambda_n)/|\Lambda_n|$, see the proof of Theorem \ref{thm:mainperc}.
Let $\epsilon>0$ be given. As $F_\omega(\Lambda_n)/|\Lambda_n|$ converges uniformly to $F$, there is an $n_0\in\NN$ such that 
$$\left|\frac{F_\omega(\Lambda_n)(m)}{|\Lambda_n|}-F(m)\right|\leq \epsilon$$ for all $m\in\RR$ and all $n > n_0$. Therefore,
\begin{align*}
 \left\vert \frac{f_\omega(\Lambda_n)(m)}{|\Lambda_n|}-f(m) \right\vert 
&= \left| \frac{F_\omega(\Lambda_n)(m)-F_\omega(\Lambda_n)(m-1)}{|\Lambda_n|}-(F(m)-F(m-1)) \right|\\
&\leq  \left| \frac{F_\omega(\Lambda_n)(m)}{|\Lambda_n|}- F(m)\right|+\left|\frac{F_\omega(\Lambda_n)(m-1)}{|\Lambda_n|}-F(m-1) \right|
\leq 2\epsilon
\end{align*}
holds true for all $m\in\RR$ and all $n > n_0$. This shows that $f_\omega(\Lambda_n)/|\Lambda_n|$ converges uniformly to $f$. 
In order to prove the existence of the densities $d_m$, fix an arbitrary $m\in\NN$.
Furthermore, let $\epsilon>0$ and choose $n_0$ as above, then we get for all $n\geq n_0$
\[
 \left| \frac{g_\omega(\Lambda_n)(m)}{|\Lambda_n|}-g(m) \right|
= m\left| \frac{f_\omega(\Lambda_n)(m)}{|\Lambda_n|}-f(m) \right|
\leq 2\epsilon m.
\]
Since
\begin{align*}
 &\lim_{n\to\infty}\left|\frac{\sharp\left\{x\in \Lambda_n \mid \left|C_x(\omega)\right|= m \right\}}{|\Lambda_n|} - g(m)\right|\\
&=\lim_{n\to\infty}\left|\frac{\sharp\left\{x\in \Lambda_n \mid \left|C_x(\omega)\right|= m \right\}-\sharp\left\{x\in \Lambda_n \mid \left|C_x^{\Lambda_n}(\omega)\right|= m \right\} }{|\Lambda_n|}\right|
&\leq \lim_{n\to\infty}\frac{|\partial^m(\Lambda_n)|}{|\Lambda_n|}=0,
\end{align*}
this proves the existence of the densities $d_m=g(m)$ for each $m\in \NN$. 

Further, given $\epsilon>0$, choose $n_1\in\NN$ such that 
\[
\left|\frac{|\Lambda_n|}{K_\omega(\Lambda_n)}-\frac 1 \kappa\right|\leq \frac \epsilon 2\quad\text{and}\quad \left|\frac{f_\omega(\Lambda_n)(m)}{|\Lambda_n|}-f(m)\right|\leq \frac{\kappa\epsilon}{2}
\]
for all $m\in\RR$ and $n\geq n_1$. Then we have
\[
 \left|\frac{f_\omega(\Lambda_n)(m)}{K_\omega(\Lambda_n)}-\frac 1 \kappa f(m)\right|
\leq  \left|\frac{f_\omega(\Lambda_n)(m)}{|\Lambda_n|}\frac{|\Lambda_n|}{K_\omega(\Lambda_n)}-\frac{f_\omega(\Lambda_n)(m)}{\kappa|\Lambda_n|}\right|+          \left|\frac{f_\omega(\Lambda_n)(m)}{\kappa|\Lambda_n|}- \frac 1 \kappa f(m)\right|\leq \epsilon
\]
for all $m\in\RR$ and $n\geq n_1$, which proves the uniform existence of $c_m$, where $c_m=\frac 1 \kappa f(m)$. Furthermore, we have indeed
\[
 \sum_{m\in\NN}c_m=\frac 1 \kappa \sum_{m\in\NN} f(m) =\frac 1 \kappa \lim_{m\to\infty} F(m) = \frac 1 \kappa \kappa =1.
\]
Here we used $\lim_{m\to\infty}F(m)=\kappa$, which can easily be shown using uniform convergence and interchanging limits.
\end{proof}


\subsection{Continuous dependence}
In this subsection we study the dependence of the limit distribution $\Phi$ given by Theorem \ref{thm:mainperc} on the percolation parameter $p=(p_s)_{s\in S}\in[0,1)^{S}$. We measure distances in $[0,1)^S$ with the usual $\ell^2$-metric.
To emphasize the dependence on $p$ we use the notation $\PP_p$, $\EE_p$, $\kappa(p)$ and $\Phi(p)$ instead of $\PP$, $\EE$, $\kappa$ and $\Phi$.
For each $\Lambda\in \cF(G)$ we define
\begin{equation}\label{def:barF}
 \bar F(\Lambda):[0,1)^S \to \cB(\RR),\quad p\mapsto \bar F_p(\Lambda),
\end{equation}
where
\[
 \bar F_p(\Lambda)(m) 
= \EE_p \left( F_\omega (\Lambda) (m)\right) 
= \EE_p \left(\sharp\left\{   C_x^\Lambda(\omega)\mid x\in \Lambda, |C_x^\Lambda(\omega)|\leq m  \right\}\right)
\]
for all $m\in \RR$. Note that for fixed $p$ and $\Lambda$ the function $\bar F_p(\Lambda)$ is constant on each interval $[k,k+1)$ for $k\in\ZZ$ and hence in $\cB(\RR)$. Furthermore, we have for arbitrary $m\in\RR$ and $\Lambda\in\cF(G)$
\begin{align*}
 \bar F_p(\Lambda)(m)
= \sum_{k=1}^{|\Lambda|} k\cdot \PP_p\left( \left\{ \omega\in\Omega\mid F_\omega(\Lambda)(m)=k \right\}\right)
= \sum_{k=1}^{|\Lambda|} k \sum_{a\in \{0,1\}^{\Lambda\times S}} \PP_p\left( Z(a) \right)  \mathbf 1_{ M(\Lambda,m,k) }(a),
\end{align*}
where
\[
 M(\Lambda,m,k):=\bigl\{a\in \{0,1\}^{\Lambda\times S} \mid F_\omega(\Lambda)(m) =k   \text{ for some } \omega\in Z(a)  \bigr\}
\]
and $Z(a)=\{\omega\in\Omega \mid \omega_{v,s} = a_{v,s} \text{ for all } v\in \Lambda, s\in S\}$ for $a=(a_{v,s})_{v\in\Lambda,s\in S} \in \{0,1\}^{\Lambda\times S}$. Here we used that for fixed $a,m$ and $\Lambda$ one has $F_\omega(\Lambda)(m)=k$ for all $\omega\in Z(a)$ if and only if $F_\omega(\Lambda)(m)=k$ for some $\omega\in Z(a)$.
Obviously we have
\[
 \PP_p(Z(a))= \prod_{v\in\Lambda}\prod_{s\in S}\left( p_s \delta_{1,a_{v,s}}+ (1-p_s )\delta_{0,a_{v,s}} \right),
\]
where $\delta_{i,j}$ is the Kronecker delta. This shows that $p\mapsto \PP_p(Z(a))$ is a multivariate polynomial in $p_s$, $s\in S$ and hence, it is continuous in $p=(p_s)_{s\in S}$. As mentioned before, we use the $\ell^2$-norm and the induced metric in $[0,1)^S$.
Now it is clear that for each $m\in \RR$ and $\Lambda\in\cF(G)$ the mapping $p\mapsto \bar F_p(\Lambda)(m)$ is continuous as well. Therefore, given $\epsilon>0$, $\Lambda\in\cF(G)$ and $p\in[0,1)^S$ we can find $\delta>0$ such that $|\bar F_p(\Lambda)(m)-  \bar F_{p'}(\Lambda)(m)|\leq \epsilon$ whenever $p'\in[0,1)^S$ with $\Vert p-p'\Vert_2\leq\delta$ for all $m\in\{0,1,\dots |\Lambda|\}$. Hence, for these $p'$ we have $\Vert \bar F_{p}(\Lambda)-\bar F_{p'}(\Lambda)\Vert\leq \epsilon$, where we $\|\cdot\|$ denotes the supremum norm in $\cB(\RR)$. This proves the continuity of the function $p\mapsto \bar F_p(\Lambda)$. This shows that $\Lambda\mapsto\bar F(\Lambda)$ maps elements of $\cF(G)$ into the space of all continuous functions mapping from $[0,1)^S$ to $\cB(\RR)$. We write
\[
 \cC([0,1)^S,\cB(\RR))=\{\phi:[0,1)^S\to \cB(\RR)\mid \phi \text{ is continuous}\}
\]
and we equip this space with the supremum norm 
$$\Vert\phi\Vert_\infty=\sup_{p\in[0,1)^S}\Vert \phi(p)\Vert=\sup_{p\in[0,1)^S}\sup_{m\in \RR} \vert \phi(p)(m)\vert .$$ 
Note that with this norm $\cC([0,1)^S,\cB(\RR))$ is a Banach space. 
Our next goal is to study the limit of $\bar F(\Lambda_n)/|\Lambda_n|$ for some F\o lner sequence $(\Lambda_n)$.

The following result has been shown by {\sc Grimmett} for $\ZZ^d$ in \cite{Grimmett-76}. We generalize these ideas to the case of amenable groups, using our ergodic theorem from Section~5.
\begin{Lemma}\label{la:kappacont}
 The mapping $\kappa:[0,1)^S\to (0,1]$, $p\mapsto \kappa(p)=\EE_p(|C_{\id}|^{-1})$ is continuous.
\end{Lemma}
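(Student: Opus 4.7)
My plan is to expand $\kappa(p)=\EE_p(|C_{\id}|^{-1})$ as the absolutely convergent series
\[
\kappa(p)=\sum_{n=1}^{\infty}\frac{1}{n}\,\PP_p(|C_{\id}|=n),
\]
and then to verify continuity of $\kappa$ by checking that (a) each summand is continuous in $p$ and (b) the partial sums converge uniformly on $[0,1)^S$. Once both are in place, $\kappa$ is the uniform limit of continuous functions on a metric space, and hence continuous.

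For (a), I would decompose the event $\{|C_{\id}|=n\}$ as the disjoint union
\[
\{|C_{\id}|=n\}=\bigsqcup_{C}\{C_{\id}(\omega)=C\},
\]
taken over the finitely many subsets $C\subseteq B_n(\id)$ of cardinality $n$ with $\id\in C$ that span a connected subgraph of $\Gamma$. For each such $C$, the event $\{C_{\id}(\omega)=C\}$ depends only on the finitely many edges of $\Gamma$ incident to $C$: it requires that the active edges inside $C$ connect all vertices of $C$, while every edge between $C$ and $V\setminus C$ is inactive. Marginalising $\PP_p$ to the associated finite family of coordinates $(v,s)\in\Lambda\times S$ exhibits $\PP_p(\{C_{\id}(\omega)=C\})$ as a polynomial in the coordinates $(p_s)_{s\in S}$, hence as a continuous function of $p$. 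Summing the finitely many contributing polynomials, $p\mapsto\PP_p(|C_{\id}|=n)$ is continuous on $[0,1)^S$.

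For (b), the crude bound $\sum_{n=N+1}^{\infty}\PP_p(|C_{\id}|=n)\leq 1$ gives the uniform tail estimate
\[
\Bigl|\kappa(p)-\sum_{n=1}^{N}\frac{1}{n}\PP_p(|C_{\id}|=n)\Bigr|\leq\frac{1}{N+1}
\]
on $[0,1)^S$, which finishes the proof. There is no genuinely hard step here: the only place where the amenable Cayley graph structure enters is the local finiteness of $\Gamma$, which guarantees that the boundary of any finite set meets only finitely many edges and keeps the polynomial representation in (a) available. The ergodic theorem of Section~5 is not invoked directly in this lemma; it entered earlier through Lemma~\ref{la:kappa}, where it was used precisely to identify the almost sure limit of $K_\omega(\Lambda_n)/|\Lambda_n|$ with the very quantity $\kappa(p)$ whose continuity is being established here.
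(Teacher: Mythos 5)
Your proof is correct, but it takes a genuinely different and more elementary route than the paper. The paper's proof is indirect: it first invokes Lemma~\ref{la:kappa} (itself a consequence of Lindenstrauss' pointwise ergodic theorem) to identify $\kappa(p)$ as the almost-sure limit of $K_\omega(\Lambda_n)/|\Lambda_n|$; it then passes to expectations via Fatou, packages $\Lambda\mapsto\EE_p(K_\omega(\Lambda))$ as a $\cC_\omega$-invariant, almost-additive function with values in the Banach space $\cB([0,1)^S)$, and applies the main ergodic theorem (Theorem~\ref{thm:ET}) to obtain \emph{uniform} convergence in $p$ of the continuous finite-volume approximations $\EE_p(K_\omega(\Lambda_n))/|\Lambda_n|$, from which continuity of $\kappa$ follows. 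You bypass all of this: you read off the series $\kappa(p)=\sum_{n\geq 1}\frac1n\PP_p(|C_{\id}|=n)$ directly from the definition, observe that each term is a polynomial in $(p_s)_{s\in S}$ by local finiteness of the Cayley graph, and get a uniform tail bound $\frac{1}{N+1}$ for free. Your approach is shorter, avoids the ergodic machinery entirely, and is closer to Grimmett's original treatment on $\ZZ^d$. What the paper's argument buys in exchange is additional information — uniform-in-$p$ convergence of the finite-volume counts $\EE_p(K_\omega(\Lambda_n))/|\Lambda_n|$ to $\kappa(p)$ — and it doubles as a further illustration of Theorem~\ref{thm:ET} in action, which fits the expository purpose of Section~8. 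One very minor point of hygiene in your write-up: it is worth stating explicitly that $|C_{\id}|^{-1}$ is interpreted as $0$ on the event $\{|C_{\id}|=\infty\}$, so that the series identity for $\kappa(p)$ is exact rather than an inequality; the paper's definition of $\kappa$ already builds this in, and your tail estimate uses it implicitly.
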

\begin{proof}
 Let $(\Lambda_n)_{n\in\NN}$ be a tempered F\o lner sequence.
  By Lemma \ref{la:kappa}, for each $p\in[0,1)^S$ there is a set $\Omega_p$ with $\PP_p(\Omega_p)=1$ and $\kappa(p)=\lim_{n\to \infty} K_\omega(\Lambda_n)/|\Lambda_n|$ for all $\omega\in\Omega_p$, which gives
 \[
  \limsup_{n\to\infty}\frac{K_\omega(\Lambda_n)}{|\Lambda_n|}= \kappa(p) = \liminf_{n\to\infty}\frac{K_\omega(\Lambda_n)}{|\Lambda_n|}\quad \PP_p\text{-a.s.}
 \]
 Fatou's Lemma implies
 \[
  \limsup_{n\to\infty}\frac{\EE_p(K_\omega(\Lambda_n))}{|\Lambda_n|}\leq \kappa(p) \leq \liminf_{n\to\infty}\frac{\EE_p(K_\omega(\Lambda_n))}{|\Lambda_n|}
 \]
 and hence $\kappa(p)=\lim_{n\to\infty} \EE_p(K_\omega(\Lambda_n))/|\Lambda_n|$. 
Now we show that this limit exists even uniformly in $p$ and that the limit function is continuous.
To do so, set
\[
 H:\cF(G)\to \cB([0,1)^S),\quad H(\Lambda)(p):= \EE_p (K_\omega(\Lambda))= \EE_p \left(\sharp \{C_x^\Lambda(\omega)\mid x\in \Lambda \} \right),
\]
where $\cB([0,1)^S)$ denotes the Banach-space of bounded and continuous functions mapping from $[0,1)^S$ to $\RR$,
equipped with the supremum norm. 
Note that the mapping $p\mapsto\EE_p(K_\omega(\Lambda))$ is in $\cB([0,1)^S)$ since 
\[
 \EE_p(K_\omega(\Lambda))=\sum_{k=1}^{|\Lambda|} k\cdot\PP_p(\{\omega\in\Omega\mid K_\omega(\Lambda)=k\})
\]
and $\PP_p(\{\omega\in\Omega\mid K_\omega(\Lambda)=k\})$ is a polynomial in $p_s$, $s\in S$. By the translation invariance of the measure, 
we have $H(\Lambda)=H(\Lambda z)$ for all $z\in G$ and $\Lambda \in \cF(G)$, 
hence $H$ is $\cC_\omega$-invariant for each $\omega\in\Omega$. Moreover, we claim that for 
disjoint $\Lambda_1,\dots,\Lambda_k\in\cF(G)$, we have
\begin{equation}
 \left\Vert H(\Lambda) - \sum_{i=1}^k H(\Lambda_i)  \right\Vert \leq 2|S|\sum_{i=1}^k |\partial (\Lambda_i)|,
\end{equation}
where $\Lambda=\bigcup_{i=1}^k \Lambda_i$. This is true since for any $p\in [0,1)^S$,
\begin{align*}
 \left| \EE_p(K_\omega(\Lambda)) -\sum_{i=1}^k \EE_p(K_\omega(\Lambda_i)) \right| 
\leq \EE_p\left(\left| K_\omega(\Lambda) - \sum_{i=1}^k K_\omega(\Lambda_i) \right|\right)
\leq   2|S|  \sum_{i=1}^k \left|\partial (\Lambda_i) \right|.
\end{align*}
Note that the latter inequality holds true since $K_\omega(\Lambda)=F_\omega(\Lambda)(|\Lambda|)$ and by Lemma \ref{la:aad}. Therefore, Theorem \ref{thm:ET} yields that the functions $H(\Lambda_n)/|\Lambda_n|$ converge uniformly to $\kappa\in \cB([0,1)^S)$ as $n$ tends to infinity.
\end{proof}
The main result of this subsection is the following.
\begin{Theorem}
 Let $(\Lambda_n)$ be a tempered F\o lner sequence and let for each $p\in [0,1)^S$ the function $\Phi_p\in\cB(\RR)$ be the limit given by Theorem \ref{thm:mainperc}. Then the function $\Psi:[0,1)^S\to \cB(\RR)$, $p\mapsto \Phi_p$ is continuous.
\end{Theorem}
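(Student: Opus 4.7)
The plan is to reduce the statement to the continuity of the unnormalized limit $p \mapsto F_p := \lim_{n\to\infty} F_\omega(\Lambda_n)/|\Lambda_n|$ (which exists $\PP_p$-almost surely by Theorem \ref{thm:mainperc}), and then to exploit the continuity of $\kappa$ from Lemma \ref{la:kappacont} along with the identity $\Phi_p = F_p/\kappa(p)$. This generalizes the argument used in the proof of Lemma \ref{la:kappacont}, by applying the almost-additive ergodic theorem to a function with values in the Banach space of continuous functions of $p$.

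Consider $\bar F:\cF(G)\to \cC([0,1)^S,\cB(\RR))$ as defined in \eqref{def:barF}; the excerpt already verifies that $\bar F(\Lambda)$ is continuous in $p$. I will apply Theorem \ref{thm:ET} to $\bar F$, endowing $G$ with the trivial (one-color) coloring $\cC$, so that frequencies trivially exist and equal $1$ along any F\o lner sequence. The $\cC$-invariance of $\bar F$ amounts to translation invariance: using that clusters in $\Lambda x$ under a configuration $\omega$ are in bijection with clusters in $\Lambda$ under $T_x\omega$, combined with the $\PP_p$-invariance of $T_x$, one obtains $\bar F(\Lambda x)=\bar F(\Lambda)$ for every $x\in G$. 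Almost additivity with boundary term $b(\Lambda)=2|S||\partial^1(\Lambda)|$ follows by taking expectations in Lemma \ref{la:aad}: for disjoint $\Lambda_1,\dots,\Lambda_k$ with $\Lambda=\bigcup_i\Lambda_i$,
\begin{equation*}
\Bigl\| \bar F(\Lambda) - \sum_{i=1}^k \bar F(\Lambda_i)\Bigr\|_\infty
\leq \sup_{p,m} \EE_p\Bigl| F_\omega(\Lambda)(m) - \sum_{i=1}^k F_\omega(\Lambda_i)(m) \Bigr|
\leq 2|S|\sum_{i=1}^k |\partial^1(\Lambda_i)|.
\end{equation*}

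Theorem \ref{thm:ET} then yields some $\hat F \in \cC([0,1)^S,\cB(\RR))$ with $\bar F(\Lambda_n)/|\Lambda_n| \to \hat F$ uniformly in $p\in [0,1)^S$ and $m\in\RR$. To identify $\hat F(p)$ with $F_p$, I use that $F_\omega(\Lambda_n)(m)/|\Lambda_n|\to F_p(m)$ $\PP_p$-almost surely for each $m$ by Theorem \ref{thm:mainperc}, together with the uniform bound $F_\omega(\Lambda_n)(m)/|\Lambda_n|\leq 1$; dominated convergence then gives $\bar F_p(\Lambda_n)(m)/|\Lambda_n|\to F_p(m)$, so $\hat F(p) = F_p$. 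Consequently, $p\mapsto F_p$ is continuous as a map from $[0,1)^S$ to $\cB(\RR)$.

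To finish, note that $\kappa(p)>0$ on $[0,1)^S$: since $p_s<1$ for each $s\in S$, the configuration in which no edge incident to $\id$ is active has strictly positive probability, so $\PP_p(|C_\id|=1)>0$ and $\kappa(p)\geq \PP_p(|C_\id|=1)>0$. Combining Lemma \ref{la:kappacont} with the continuity of $p\mapsto F_p$ just established, the quotient $p\mapsto \Phi_p = F_p/\kappa(p)$ is continuous from $[0,1)^S$ into $\cB(\RR)$. The only delicate step is the identification $\hat F(p)=F_p$, which rests on the dominated convergence argument together with the uniform bound coming from the obvious observation that the number of clusters in $\Lambda_n$ never exceeds $|\Lambda_n|$.
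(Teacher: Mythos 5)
Your proposal is correct and follows essentially the same route as the paper: apply the Banach-space ergodic theorem (Theorem \ref{thm:ET}) to $\bar F$ with values in $\cC([0,1)^S,\cB(\RR))$, identify the limit at each fixed $p$ via dominated convergence and the almost-sure convergence from Theorem \ref{thm:mainperc}, and divide by the continuous, strictly positive $\kappa$. The only cosmetic differences are that you phrase the dominated-convergence identification pointwise in $m$ rather than directly in the supremum norm, and you spell out the (already noted) lower bound $\kappa(p)\geq\PP_p(|C_{\id}|=1)>0$; both are fine.
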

\begin{proof}
Let $\bar F:\cF(G)\to \cC([0,1)^S,\cB(\RR))$ be given as in \eqref{def:barF}. Furthermore, let $\Lambda_1,\dots,\Lambda_k\in \cF(G)$ be disjoint and set $\Lambda=\bigcup_{i=1}^k \Lambda_i$. Then for any $p\in[0,1)^S$ and $m\in\RR$ we have
\begin{align*}
  \left|\bar F_p(\Lambda)(m)-\sum_{i=1}^k\bar F_p(\Lambda_i)(m)\right|
\leq  \EE_p \left(\left| F_\omega(\Lambda)(m) - \sum_{i=1}^k F_\omega(\Lambda_i)(m) \right|\right)
\leq   2 |S| \sum_{i=1}^k \partial(\Lambda_i) ,
\end{align*}
where the last inequality follows from Lemma \ref{la:aad}. Therefore,
\[
\left\Vert \bar F(\Lambda)- \sum_{i=1}^k\bar F(\Lambda_i)\right\Vert_\infty
= \sup_{p\in[0,1)^S}\sup_{m\in\RR}\left\vert F_p(\Lambda)(m)-\sum_{i=1}^k \bar F_p(\Lambda_i)(m) \right\vert
\leq 2 |S| \sum_{i=1}^k \partial(\Lambda_i).
\]
Besides, we have $\bar F(\Lambda)=\bar F(\Lambda x)$ for arbitrary $\Lambda\in \cF(G)$ and $x\in G$ which allows us to apply Theorem \ref{thm:ET}. This proves the existence of a function $\tilde F\in \cC([0,1)^S,\cB(\RR))$ with
\[
 \lim_{n\to\infty}\left\Vert \frac{\bar F(\Lambda_n)}{|\Lambda_n|}-\tilde F \right\Vert_\infty=0.
\]
We claim that $\tilde F(p)=\kappa(p)\Phi_p$ for all $p\in[0,1)^S$. If this holds true, the proof is completed since $\kappa:[0,1)^S\to(0,1]$ is continuous by Lemma \ref{la:kappacont}.
To prove the claim, let $p\in[0,1)^S$ be given and calculate 
\begin{align*}
 \left\Vert \frac{F_\omega(\Lambda_n)}{|\Lambda_n|}-\kappa(p)\Phi_p \right\Vert
&\leq
 \left\Vert \frac{F_\omega(\Lambda_n)}{|\Lambda_n|}-\frac{K_\omega( \Lambda_n)}{|\Lambda_n|}\Phi_p  \right\Vert 
+\left\Vert \frac{K_\omega( \Lambda_n)}{|\Lambda_n|}\Phi_p   - \kappa(p)\Phi_p \right\Vert \\
&\leq
 \left|\frac{K_\omega(\Lambda_n)}{|\Lambda_n|} \right|\cdot \left\Vert \frac{F_\omega(\Lambda_n)}{K_\omega(\Lambda_n)}-\Phi_p  \right\Vert 
+  \left\vert \frac{K_\omega( \Lambda_n)}{|\Lambda_n|}   - \kappa(p) \right\vert \cdot \left\Vert \Phi_p \right\Vert,
\end{align*}
which implies
\[
 \lim_{n\to\infty}\left\Vert \frac{F_\omega(\Lambda_n)}{|\Lambda_n|}-\kappa(p)\Phi_p \right\Vert =0\quad\quad\quad \PP_p\text{-a.s.}
\]
Note that the norm we used here is the supremum norm in the space $\cB(\RR)$. Now, by the Lebesgue convergence theorem, we obtain 
\begin{align*}
 0=\EE_p\left(\lim_{n\to\infty}\left\Vert \frac{F_\omega(\Lambda_n)}{|\Lambda_n|}-\kappa(p)\Phi_p \right\Vert \right)
&=\lim_{n\to\infty} \EE_p\left(\left\Vert \frac{F_\omega(\Lambda_n)}{|\Lambda_n|}-\kappa(p)\Phi_p \right\Vert \right)\\
&\geq \limsup_{n\to\infty} \left\Vert \frac{\EE_p\left(F_\omega(\Lambda_n)\right)}{|\Lambda_n|}-\kappa(p)\Phi_p  \right\Vert 
\end{align*}
and hence
\[
 \lim_{n\to\infty} \left\Vert \frac{\bar F_p(\Lambda_n)}{|\Lambda_n|}-\kappa(p)\Phi_p  \right\Vert =0.
\]
This finishes the proof of the claim.
\end{proof}

\section{Appendix} \label{sec:appendix}

For the convenience of the reader, we give the proofs of the Lemmas~\ref{lemma:folner} and~\ref{lemma:ow4} in this appendix. As the corresponding elaborations are rather technical, we did not put them into the main body of the present paper.

\begin{proof}[Proof of Lemma~\ref{lemma:folner}]
In order to prove (a) let $G$ be amenable, second countable and unimodular. We denote by $\{V_n\}$ an enumeration of the countable base of the topology of $G$. Since $G$ is locally compact, we can choose the $V_n$ to be pre-compact. We now set $K_n := \cup_{j=1}^n \overline{V}_j$. Then each $K_n$ is compact and we have $K_n \subseteq K_{n+1}$ for $n \geq 1$, as well as $\cup_n K_n = G$. 

Let $K \subseteq G$ be compact. We claim that there is some $M \in \NN$ such that $K \subseteq K_M$. For the proof, note first that for any $g \in G$, there is some $n(g) \in \NN$ such that $g \in V_{n(g)}$. Hence the union $\cup_{g \in K} V_{n(g)}$ is an open cover of $K$. Since $K$ is compact there must be a finite subcover $K \subseteq \cup_{j=1}^m V_{n(g_j)} \subseteq \cup_{j=1}^m \overline{V}_{n(g_j)}$. The latter union denotes a compact set and by construction of the $K_n$, we have
$K \subseteq \cup_{j=1}^m \overline{V}_{n(g_j)} \subseteq K_M$, where $M:=\max \{n(g_j)\mid j=1,\dots,m\}$.

Take a sequence $(\varepsilon_n)$ of positive numbers converging to 0. By the above mentioned statement statement in \cite{OrnsteinW-87}, we find for each $n \in \NN$ a compact set $F_n$ such that
\begin{align*}
\frac{|\partial_{K_n}(F_n)|}{|F_n|} < \varepsilon_n.
\end{align*}
Hence, we conclude that for all $n \geq M$, one obtains with $K \subseteq K_M \subseteq K_n$ that
\begin{align*}
\frac{|\partial_K(F_n)|}{|F_n|} 
\leq \frac{|\partial_{K_M}(F_n)|}{|F_n|} 
\leq \frac{|\partial_{K_n}(F_n)|}{|F_n|} < \varepsilon_n.
\end{align*}
So, clearly $\lim_{n \rightarrow \infty} |\partial_K(F_n)|/|F_n| = 0$.

Now assume that $G$ is arbitrary and unimodular and $(F_n)$ is some strong F\o lner sequence. To show (b) it is enough to verify that for each compact $F,K\subseteq G$ one has 
\[
 F \triangle KF \subseteq \partial_{K \cup K^{-1} \cup \{\id \}}(F),
\]
since $K\cup K^{-1}\cup \{\id \}$ is a compact set. To this end assume first that $g \in KF \setminus F$. Then $g = kf$ with $k \in K$ and $f \in F$, but $kf \notin F$. Define $L_K := K \cup K^{-1} \cup \{\id \}$. Since $k^{-1} \in L_K$, one has $L_Kg \cap F \neq \emptyset$. As $\id \in L_K$, we derive $L_Kg \cap (G \setminus F) \neq \emptyset$, hence $g \in \partial_{L_K}(F)$. 
Now assume that $h \in F \setminus KF$. Then for all $f \in F$ and all $k \in K$, $h \neq kf$, which implies $k^{-1}h \notin F$ for all $k \in K$. It follows from $K^{-1} \subseteq L_K$ that $L_Kh \cap (G\setminus F) \neq \emptyset$. Since $\id \in L_K$ and $h \in F$, we also have $L_Kh \cap F \neq \emptyset$ and thus, $h \in \partial_{L_K}(F)$.

For part (c) of proof let $(F_n)$ be a weak F\o lner sequence in a unimodular group $G$. Furthermore, let a compact $K\subseteq G$ and $\epsilon>0$ be given. As $(F_n)$ is a weak F\o lner sequence, there exists some $n\in \NN$ such that
\[
 \frac{|K^{-1}F_n\setminus F_n| }{|F_n|} \leq \frac{|K^{-1}F_n \triangle F_n|}{|F_n|} \leq \epsilon.
\]
As for each $k\in K$ we have $|F_n\setminus k F_n|=|k^{-1}F_n\setminus F_n|\leq |K^{-1}F_n\setminus F_n|$, the properties (i) and (ii) of Definition \ref{defi:AMENABLE} hold with $K_0=K$.

To show (d) we assume that $G$ is countable and $|\cdot|$ denotes the counting measure. Let $T$ and $K$ be arbitrary compact sets and set $L_K:=K\cup K^{-1}\cup \{\id\}$. In this situation it is enough to prove
\[
 \partial_K(T)\subseteq \partial_{L_K}(T) \subseteq L_K(T\triangle L_K T),
\]
since then $|\partial_K(T)|\leq |L_K||T \triangle L_KT|$. Here the first inclusion holds by Lemma \ref{prop:prop}. To see the second inclusion, take some $g \in G$ such that $L_Kg$ intersects non-trivially both $T$ and $G \setminus T$. By the symmetry of $L_K$, we have $g \in L_K^{-1}T = L_KT$. If $g \notin T$, then $g \in L_KT \setminus T$ and since $\id \in L_K$, we prove the claim for this case. 
If $g \in T$, find some $k \in L_K$ such that $kg \in L_KT \setminus T$, which exists since $L_Kg\cap (G\setminus T)$ is non-empty. Again by the symmetry of $L_K$, we have $g \in L_K(L_KT \setminus T)$, which proves part (d) of the Lemma, as we have shown $g\in L_K(T\triangle L_KT)$ in both cases.

Now we prove (e). Let $(F_n)$ be a strong F\o lner sequence in $G$. We choose some $x\in F_1$ and set $T_1:=F_1x^{-1}$, then we proceed inductively. If $T_1,\dots, T_k$ are chosen, then there is an $n\in \NN$ such that $F_n$ is $(T_k,1)$-invariant. As $\id\in T_k$ we have $F_n\setminus \partial_{T_k}(F_n)\subseteq \{g\in F_n\mid T_k g\subseteq F_n\}=:S$ which gives with $|F_n\setminus \partial_{T_k}(F_n)|\geq |F_n|-|\partial_{T_k}(F_n)|>0$ that $S$ has positive Haar measure. Hence $S$ is non-empty and we take some $g\in S$ and define $T_{k+1}:=F_ng^{-1}$. This procedure gives a sequence $(T_n)$ which is by construction nested and which is a strong F\o lner sequence as it is up to shifts a subsequence of $(F_n)$. Note that here we used unimodularity of $G$ and (vi) of Lemma \ref{prop:prop} which gives that shifts do not change the measure of the $K$-boundary.

Statement (f) was shown in \cite{Lindenstrauss-01} for weak F\o{}lner sequences. By part (b) this holds for strong F\o{}lner sequences as well.

\end{proof}

Before we turn to the proof of Lemma~\ref{lemma:ow4}, we need two short auxiliary lemmas, which have already been proven in \cite{OrnsteinW-87}, Section I.3.

\begin{Lemma} \label{lemma:ow1}
Let $G$ be a unimodular group and assume that $K$ is a non-empty, compact set in $G$ containing the unit element $\id$ and let $T \subseteq G$ be $(K, \delta)$-invariant. Then for the set
\begin{eqnarray*}
S:= \{g \in G \,|\, Kg \subseteq T\},
\end{eqnarray*}
the following statements hold true:
\begin{enumerate}[(i)]
\item $|S| \geq (1- \delta)|T|$,
\item $\int_{S} \one_{Kc}(g) \, dc \leq |K|$ for all $g \in G$.
\end{enumerate} 
\end{Lemma}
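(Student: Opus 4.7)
My plan is to treat the two statements separately; both should follow quickly from the definitions, the hypothesis $\id\in K$, and unimodularity of $G$.

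For part (i), the key observation is that the assumption $\id\in K$ forces $S\subseteq T$: for every $g\in S$ one has $g=\id\cdot g\in Kg\subseteq T$. Hence $|T|-|S|=|T\setminus S|$, and it suffices to bound $|T\setminus S|$ by $|\partial_K(T)|$. I would argue that $T\setminus S\subseteq\partial_K(T)$ as follows. Pick $g\in T\setminus S$. Since $\id\in K$, the point $g$ itself lies in $Kg\cap T$, so $Kg\cap T\neq\emptyset$. On the other hand, $g\notin S$ means $Kg\not\subseteq T$, so there exists some $k\in K$ with $kg\in G\setminus T$, giving $Kg\cap(G\setminus T)\neq\emptyset$. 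By Definition~\ref{defi:KBD}, $g\in\partial_K(T)$. The $(K,\delta)$-invariance of $T$ then yields
\[
|T|-|S|=|T\setminus S|\leq|\partial_K(T)|<\delta|T|,
\]
whence $|S|\geq(1-\delta)|T|$.

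For part (ii), I would rewrite the integral as a measure of a translate of $K^{-1}$. Explicitly, for fixed $g\in G$,
\[
\int_{S}\one_{Kc}(g)\,dc=|\{c\in S\,|\,g\in Kc\}|=|\{c\in S\,|\,c\in K^{-1}g\}|=|S\cap K^{-1}g|\leq|K^{-1}g|.
\]
Since $G$ is unimodular, the Haar measure is both left- and right-invariant and invariant under inversion, so $|K^{-1}g|=|K^{-1}|=|K|$. This gives the desired bound.

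Neither step is technically demanding; the only place where one has to be slightly careful is the symmetry argument in part (ii), where one must invoke unimodularity in two ways (right-translation invariance to strip the $g$, and inversion invariance to replace $|K^{-1}|$ by $|K|$). I expect no real obstacle beyond this bookkeeping, which is precisely why the authors in \cite{OrnsteinW-87} and in the excerpt treat the lemma as auxiliary and state it without much fanfare.
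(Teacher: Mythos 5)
Your proof is correct and takes essentially the same route as the paper: for part (i) the paper simply notes the identity $S=T\setminus\partial_K(T)$, which is exactly what your two inclusions ($S\subseteq T$ and $T\setminus S\subseteq\partial_K(T)$) establish; for part (ii) the paper rewrites $\one_{Kc}(g)=\one_K(gc^{-1})$ and integrates over $G$, while you instead express the integral as $|S\cap K^{-1}g|$, but both come down to the same use of unimodularity (right-translation and inversion invariance of Haar measure).
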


\begin{proof}
Note first that $S = T \setminus \partial_K(T)$ such that by the fact that $\id \in K$, (i) is satisfied. Further, consider translates $Kc$ for $c \in  S$. Then we derive the following formula which proves the second statement:
\begin{eqnarray*} 
\int_{S} \one_{Kc}(g)\, dc = \int_{S} \one_K(gc^{-1}) dc \leq \int_G \one_K(h)\, dh = |K|. 
\end{eqnarray*} 
\end{proof}

\begin{Lemma} \label{lemma:ow2}
Let $G$ be a unimodular group. Let $K,S,T \subseteq G$ be non-empty compact sets, where $\id \in K$, $|T|>0$ and $|S|/|T|\geq 1-\delta$  for some $0<\delta<1$. Then for all Borel sets $A \subseteq G$ with finite Haar measure there is some $c \in S$ such that
\begin{equation}\label{eq:lemma:ow2}
|Kc \cap A| \leq \frac{|A|\, |K|}{|T|(1-\delta)}.
\end{equation}
\end{Lemma}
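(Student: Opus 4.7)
The plan is to use a standard averaging argument: I would compute the integral $\int_S |Kc\cap A|\,dc$, bound it by $|A||K|$ using Fubini and unimodularity, and conclude by a mean-value argument that some $c\in S$ realizes a value at most the average $|A||K|/|S|\leq |A||K|/((1-\delta)|T|)$.

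First I would rewrite $|Kc\cap A|=\int_G \one_{Kc}(g)\one_A(g)\,dg$ and invoke Tonelli to interchange the order of integration:
\[
\int_S |Kc\cap A|\,dc \;=\; \int_A \Big(\int_S \one_{Kc}(g)\,dc\Big)\,dg.
\]
The inner integral equals $\int_S \one_K(gc^{-1})\,dc$, and the key observation is that by unimodularity (inverse invariance of the Haar measure) together with left invariance,
\[
\int_G \one_K(gc^{-1})\,dc \;=\; \int_G \one_K(gc)\,dc \;=\; \int_G \one_K(c)\,dc \;=\; |K|.
\]
Since $S\subseteq G$ and the integrand is non-negative, the same bound $|K|$ holds for the inner integral over $S$. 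Inserting this into the outer integral gives $\int_S |Kc\cap A|\,dc\leq |A||K|$.

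Finally, the hypothesis $|S|\geq(1-\delta)|T|>0$ implies that the average value of $c\mapsto |Kc\cap A|$ on $S$ is at most $|A||K|/((1-\delta)|T|)$. Since the essential infimum of a non-negative measurable function on a set of positive measure never exceeds its average, there must exist some $c\in S$ satisfying the desired estimate \eqref{eq:lemma:ow2}. The only step requiring mild care is the unimodularity identity for the inner integral; everything else is a routine Fubini/averaging argument, so I do not anticipate any real obstacle.
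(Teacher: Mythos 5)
Your argument is correct and is essentially the same as the paper's: both compute $\int_S |Kc\cap A|\,dc\leq |A||K|$ via Fubini/Tonelli plus unimodularity, then conclude by the pigeonhole principle (the paper phrases it as a proof by contradiction, you phrase it as "some point is below the average," which is the contrapositive of the same thing). One small caution: your appeal to the \emph{essential} infimum is slightly imprecise, since an essential infimum bound only gives existence on a set of positive measure, not at a literal point; the clean way to close the argument is the paper's contrapositive — if $|Kc\cap A|>|A||K|/((1-\delta)|T|)$ held for \emph{every} $c\in S$, then integrating a strictly positive function over the positive-measure set $S$ would force $\int_S |Kc\cap A|\,dc>|A||K|$, contradicting your bound.
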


\begin{proof}
Let $A$ be a subset of $G$ with finite Haar measure. Assume that for no $c \in S$ the Inequality \eqref{eq:lemma:ow2} is satisfied. In this case we get
\begin{equation} \label{eqn:contradict}
\int_S |Kc\cap A| dc > \int_S \frac{|A|\, |K|}{|T|(1-\delta)} dc = \frac{|A|\, |K|\, |S|}{|T|(1-\delta)} \geq |A|\, |K|,  
\end{equation}
where the last inequality is due to the fact that $|S|/|T|\geq 1-\delta$.
However, like in the proof of Lemma \ref{lemma:ow1}, we obtain
\begin{align*}
\int_S |Kc\cap A| dc 
= \int_S \int_A \one_{Kc}(g)\,dg \,dc 
= \int_A \int_S \one_{K}(gc^{-1})\,dc \,dg
\leq \int_A \int_G \one_{K}(h)\,dh \,dg
= |A|\, |K|
\end{align*}
which clearly is a contradiction to the strict Inequality (\ref{eqn:contradict}). Thus, we find $c\in S$ such that \eqref{eq:lemma:ow2} holds and our statement is proven.
\end{proof}

For the proof of Lemma~\ref{lemma:ow4}, we will use the notion of maximal $\epsilon$-disjointness. Let $\cP$ be a property which a subset of group $G$ can obey, let $I$ be some index set, $J\subseteq I$ and $\{K_i\}_{i\in I}$ a family of subsets of $G$. The family $\{K_i\}_{i\in J}$ is called \emph{maximal $\epsilon$-disjoint} with property $\cP$, if $\{K_i\}_{i\in J}$ is $\epsilon$-disjoint and each $K_i$ satisfies $\cP$, however for each $j\in I\setminus J$ such that $K_j$ satisfies $\cP$, the family $\{K_i\}_{i\in J \cup j}$ is no longer $\epsilon$-disjoint.
A family of \emph{maximal disjoint} sets with property $\cP$ is defined analogously. In our examples the property $\cP$ will be ``being a translate of a certain set'' or/and ``being a subset of a certain set''. We use for instance the term maximal $\epsilon$-disjoint family of translates of $K$ contained in $T$, where $K,T\subseteq G$.

Finally, we have everything together to prove Lemma~\ref{lemma:ow4}.
\begin{proof}[Proof of Lemma~\ref{lemma:ow4}]
We start the proof with
a simple calculation to estimate the proportion $|K|/|T|$. For each $g\in\partial_K(T)$ and $t\in K$ we have $tg\in \partial_{KK^{-1}}(T)$, which immediately gives $|K|\leq |\partial_{KK^{-1}}(T)|$. This implies
\begin{equation}\label{eq:lemma:ow4:1}
 \frac{|K|}{|T|}\leq \frac{|\partial_{KK^{-1}}(T)|}{|T|}<\delta
\end{equation}
as $T$ is $(KK^{-1},\delta)$-invariant.

 Now we formulate the following claim: If $c_j \in T$, $j=1,\dots,n$ are elements which fulfill conditions (i)-(iii) and
\[
 \biggl|\bigcup_{j=1}^n Kc_j\biggr| < \epsilon (1-2\delta) |T|,
\]
then there exists some $c_{n+1}\in T$ such that (i)-(iii) still hold for $c_j$, $j=1,\dots, n+1$.

We postpone the proof of the claim and we assume for the moment that it holds. Then we start with some maximal disjoint family $\{Kc_j\}_{j=1}^n$ of translates of $K$ contained in $T$ with $n|K|\leq (\epsilon+\delta)|T|$ and set $K_j:=K$, $j=1,\dots,n$. Then obviously (i)-(iii) hold. If 
\[
\biggl| \bigcup_{j=1}^n Kc_j  \biggr| \geq \varepsilon(1-2\delta)  |T|,
\] 
then we are done with the proof since $\epsilon\leq 1/2$.
Otherwise we apply the claim and get some $c_{n+1}\in T$ such that conditions (i)-(iii) are still fulfilled for $c_j$, $j=1,\dots,n+1$.
Beside this we have by \eqref{eq:lemma:ow4:1}
\begin{align*}
 \biggl|\bigcup_{j=1}^{n+1}Kc_{j}\biggr|
\leq \varepsilon(1-2\delta) |T| + \delta|T|
\leq (\epsilon+\delta)|T|.
\end{align*}
 If also the first inequality in condition (iv) is satisfied for $c_1,\dots,c_{n+1}$ we are done, if not, we apply the claim again. This procedure will end after finitely many steps since $T$ has finite measure and after each iteration we cover at least $(1-\epsilon)|K|$ more than before. Thus it remains to prove the claim.

Let $c_j\in T$, $j=1,\dots,n$ be such that (i)-(iii) hold with sets $K_j$, $j=1,\dots,n$ and $|A| < \epsilon (1-2\delta)|T|$, where $A:=\bigcup_{j=1}^n Kc_j$. We set $S := \{g \in T \,|\, Kg \subseteq T\}$ and
\[
 U := \left\{ g \in S \, \Big| \, \frac{|Kg \cap \partial_B(A)|}{|K|} \leq \zeta \right\}.
\]

By definition of $U$, it follows with $T\setminus U \subseteq (T\setminus S) \cup (S \setminus U)$ that
\begin{align*}
\frac{|T \setminus U|}{|T|} \leq \frac{|T \setminus S |}{|T|} + \frac{|S \setminus U|}{|T|} 
\leq \delta + \int_{S} \frac{\one_{S\setminus U}(g)}{|T|} \, dg 
\leq \delta + \int_{S} \frac{|Kg \cap \partial_B(A)|}{\zeta |T|\, |K|} \, dg 
\end{align*}
and further, we use 
Fubini's theorem and Lemma \ref{lemma:ow1} part (ii) in the second inequality to obtain
\begin{align*}
\int_{S} \frac{|Kg \cap \partial_B(A)|}{\zeta |T|\, |K|} \, dg 
\leq \frac{1}{\zeta |T| |K|} \int_{S}\int_{\partial_B(A)} \one_{Kg}(h) \, dh \, dg 
\leq  \frac{|\partial_B(A)| }{\zeta |T|}.
\end{align*}
Clearly, the maximal number $n$ of translates of $K$ that can belong to $A$ is bounded by $|T|/[(1-\varepsilon)|K|]$ such that we arrive at
\begin{align*}
\frac{|T \setminus U|}{|T|} 
\leq  \delta +  \frac{n\,| \partial_B(K)|}{\zeta |T|}  
\leq  \delta +  \frac{|\partial_B(K)|}{\zeta (1-\epsilon)|K|}  
\leq \delta + \frac{\zeta}{(1-\varepsilon)}
\leq 2\delta,
\end{align*}
where the last inequality follows from $\varepsilon, \delta < 1/2$ and $\zeta < \delta/2 $. This yields $|U|/|T|\geq 1-2\delta$  which allows us to apply Lemma \ref{lemma:ow2} to find some $c_{n+1} \in U$ such that
\begin{eqnarray*}
|Kc_{n+1} \cap A| \leq \frac{|A||K|}{|T|(1-2\delta)}  < \epsilon |K|.
\end{eqnarray*} 
and hence condition (ii) holds for $c_j$, $j=1,\dots,n+1$. As $c_{n+1}\in S$ we have $Kc_{n+1}\subseteq T$ which gives (i). We set $K_{n+1}:= \left(Kc_{n+1} \setminus A \right)c_{n+1}^{-1}$ then by the above inequality we get $|K_{n+1}|\geq (1-\epsilon)|K|$. 
Thus, with the statement (viii) of Lemma \ref{prop:prop} and with $c_{n+1}\in U$, we have
\begin{align*}
|\partial_B(K_{n+1})| \leq |\partial_B(K \setminus A c_{n+1}^{-1})| 
\leq |K \cap \partial_B(A c_{n+1}^{-1})| + |\partial_B(K)| 
\leq \zeta|K| + |\partial_B(K)|
\end{align*}
and using $0<\epsilon<1/2$, one obtains
\begin{align*}
\frac{|\partial_B(K_{n+1})|}{|K_{n+1}|} \leq \frac{\zeta |K|}{(1-\varepsilon)|K|} + \frac{|\partial_B(K)|}{(1-\varepsilon)|K|}\leq
 2\zeta + 2\zeta^2 \leq 4\zeta.
\end{align*}
Thus (iii) holds as well and the claim is proven.
\end{proof}

\subsection*{Acknowledgment}
We would like to take this chance to thank our advisers {\sc Daniel Lenz} and {\sc Ivan Veseli\'c} for their support and their guidance during this work. We are very grateful that they generously shared their knowledge and their ideas about the topic with us. In particular, {\sc FP} expresses his thanks to {\sc Daniel Lenz} for drawing his attention to the {\sc Ornstein/Weiss} theory of amenable groups.       
Special thanks goes to {\sc Ivan Veseli\'c} as he developed the idea to apply a Banach space-valued ergodic theorem to obtain results on densities of percolation clusters. 
Moreover, {\sc FP} would like to give thanks to {\sc Benjamin Weiss} for fruitful discussions at the Hebrew University, as well as for his very helpful remarks on the theory of $\varepsilon$-quasi tilings. 
{\sc FP} gratefully points out that his work was partially supported by the German Research Council (DFG) and the German National Academic Foundation (Studienstiftung des deutschen Volkes). \\
The results of the present paper will also be part of the doctoral dissertations of FP and FS.

\bibliographystyle{amsalpha}
\bibliography{PS_lit}

\end{document}